% 13 November 2013 - uniqueness Marta and Reza
% 10 April 2014

\documentclass[leqno,twoside, 11pt]{amsart}
\usepackage{amssymb, latexsym, esint}
\usepackage{color}

\setlength{\hoffset}{-2cm}
\setlength{\voffset}{0cm}
\setlength{\textwidth}{16.3cm}
\setlength{\textheight}{22cm}

\theoremstyle{plain}
\usepackage[english,francais]{babel}
\def\endproof{\hspace*{\fill}\mbox{\ \rule{.1in}{.1in}}\medskip }

\numberwithin{equation}{section}
\numberwithin{figure}{section}

\newtheorem{theorem}{Theorem}[section]
\newtheorem{corollary}[theorem]{Corollary}
\newtheorem{lemma}[theorem]{Lemma}
\newtheorem{proposition}[theorem]{Proposition}

\newcommand{\R}{\mathbb R} 
\newcommand{\sym}{{\mathrm{sym}}}
\newcommand{\curl}{{\mathrm{curl}}}
\newcommand{\ds}{\displaystyle} 

\theoremstyle{definition}
\newtheorem{example}[theorem]{Example}
\newtheorem{remark}[theorem]{Remark}

\numberwithin{equation}{section}
\numberwithin{figure}{section}
 
\begin{document}

\title[Variational models with Monge-Amp\`ere constraint]
{Variational models for prestrained plates with  Monge-Amp\`ere constraint}
\author{Marta Lewicka, Pablo Ochoa, and  Mohammad Reza Pakzad}
\address{M. Lewicka and M.R. Pakzad, University of Pittsburgh, Department of Mathematics, 
301 Thackeray Hall, Pittsburgh, PA 15260}
\address{P. Ochoa, University of Pittsburgh, Department of Mathematics, 
139 University Place, Pittsburgh, PA 15260; and 
Universidad Nacional de Cuyo, 5500 Mendoza, Argentina}
\email{lewicka@pitt.edu, pdo2@pitt.edu, pakzad@pitt.edu}
\subjclass{74K20, 74B20}
\keywords{non-Euclidean plates, nonlinear elasticity, Gamma
  convergence, calculus of variations, Monge-Amp\`ere equation} 
     
\maketitle

\date{\today}
\selectlanguage{english}
\begin{abstract} 
 
We derive a new  model  for pre-strained thin films, which consists of
minimizing a biharmonic energy of deformations $v\in
W^{2,2}$ satisfying the Monge-Amp\`ere constraint $\det\nabla^2v = f$. 
We further discuss multiplicity properties of the minimizers of this
model, in some special cases.

\vskip 0.5\baselineskip

\selectlanguage{francais}
% Text of abstract in French
\noindent{\bf R\'esum\'e. 
Mod\`ele variationnels pour les plaques minces avec les contraints de type Monge-Amp\`ere.}
On d\'erive un mod\`ele nouveau pour les plaques minces d\'epartant de  la th\'eorie trois-dimensionnelle 
d'\'elasticit\'e nonlin\'eaire pr\'econtrainte qui consiste en
minimisant l'\'energie biharmonique sous un contraint de type
Monge-Amp\`ere.  
On \'etudie la question de l'unicit\'e des solutions minimisantes dans certains cas sp\'eciaux. 
\end{abstract}
 \maketitle
\selectlanguage{english} 

%\tableofcontents

\section{Introduction of the problem and the main results}
  
This paper is a continuation of the analysis initiated in \cite{LePa1,
LMP-prs1, LMP-prs2, a4, LePa2}, regarding the scaling of the residual
energy and the derivation of the dimensionally-reduced models in the
description of shape-formation in prestrained thin films. The study of
materials which assume non-trivial rest configurations in the absence
of exterior forces or boundary conditions arise in various contexts, e.g.:
morphogenesis by growth, swelling or shrinkage, torn plastic sheets,
engineered polymer gels, and many others. Below, we briefly remind the mathematical
setting of the problem, called the ``incompatible elasticity'', and we
further present the main results of this paper, consisting of:  (i) the
derivation of the variational model for linearized Kirchhoff-like energy subject to
the Monge-Amp\`ere constraint, (ii) the derivation of the matching
property for the continuation of infinitesimal isometries to exact
isometries of metrics with positive Gauss curvature, and (iii) a study
of uniqueness/multiplicity of the minimizers to the derived model, in
the rotationally symmetric case.

\subsection{The set-up and the non-Euclidean elasticity model}

Let $\Omega$ be an open bounded subset of $\mathbb{R}^2$. Consider
a family of $3$d plates:  
$$\Omega^h = \Omega\times (-h/2, h/2), \qquad 0<h< <1,$$ 
viewed as the reference configurations of thin elastic tissues. 
A typical point in $\Omega^h$ is denoted by $x=(x',x_3)$ where
$x'\in\Omega$ and $|x_3|<h/2$. 
% and we shall make no distinction between 
% points $x'\in\Omega$ and $(x',0)\in \Omega^h$.
Each $\Omega^h$ is assumed to undergo an activation process, whose
instantaneous growth is described by a smooth, invertible tensor:
$$A^h=[A_{ij}^h]  :\overline{\Omega^h}\rightarrow\mathbb{R}^{3\times
  3} \quad \mbox{ with: }  \det A^h(x)>0.$$
The multiplicative decomposition model \cite{Rod, LePa1, kupferman, klein} in
the description of shape formation due to the prestrain, relies on the assumption that
for a deformation $u^h :\Omega^h
\rightarrow \mathbb{R}^3$, its elastic energy $I^h_W(u^h)$ is written in terms of
the elastic tensor $F= \nabla u^h (A^h)^{-1}$ accounting for the reorganization 
of the body $\Omega^h$ in response to $A^h$.  That is, we write:
$$ \nabla u^h = F A^h,$$ 
and define:
\begin{equation}\label{IhW}
I^h_W(u^h) = \frac{1}{h}\int_{\Omega^h} W(F) ~\mbox{d}x 
= \frac{1}{h}\int_{\Omega^h} W(\nabla u^h(A^h)^{-1}) ~\mbox{d}x
\qquad \forall u^h\in W^{1,2}(\Omega^h,\mathbb{R}^3).
\end{equation}

The elastic energy density $W:\mathbb{R}^{3\times 3}\rightarrow \mathbb{R}_{+}$ 
is assumed to satisfy the standard \cite{ciarbookvol3, FJMhier}
conditions of normalization, frame indifference (with respect to the special orthogonal group 
$SO(3)$ of proper rotations in $\mathbb{R}^3$), and second order nondegeneracy:
\begin{equation}\label{frame}
\begin{split}
\forall F\in \mathbb{R}^{3\times 3} \quad
\forall R\in SO(3) \qquad
& W(R) = 0, \quad W(RF) = W(F)\\ 
& W(F)\geq c~ \mathrm{dist}^2(F, SO(3)),
\end{split}
\end{equation}
for a constant $c>0$.
We also assume that   there exists a monotone nonnegative function $\omega:[0,+\infty] \to [0, +\infty]$ 
which converges to zero at $0$, and a  quadratic form ${\mathcal Q}_3$ on $\R^{3\times 3}$, with:
\begin{equation}\label{Q3}
\forall F\in \R^{3\times 3} \qquad |W(\mbox{Id} + F) - \mathcal{Q}_3(F)|  \le \omega(|F|)|F|^2.
\end{equation}  
This condition is satisfied in particular if 
$W$ is $\mathcal{C}^2$ regular in a neighborhood of $SO(3)$, wheras
$\mathcal{Q}_3 = \frac 12 D^2 W(\mathrm{Id})$.  
Also, note that (\ref{frame})  implies that ${\mathcal Q}_3$ is nonnegative, is
positive definite on symmetric matrices and  ${\mathcal Q}_3 (F)=
{\mathcal Q}_3(\sym ~ F)$ for all $F\in \R^{3\times 3}$ (see Lemma
\ref{Q3prop} for a proof of these standard observations).

The model (\ref{IhW}) has been extensively studied in \cite{LePa1,
  LMP-prs1, LMP-prs2, lm1, lm2, M3, k1, k2, k3}. 
Recall (which is quite easy to check) that $I^h_W(u^h)=0$ is
equivalent, via (\ref{frame}) and the polar decomposition theorem, to:  
\begin{equation}\label{zero}
(\nabla u^h)^{T}\nabla u^h = (A^h)^{T}(A^h) \quad \mbox{ and } \quad
\det \nabla u^h > 0 \quad \mbox{in }\Omega^h.
\end{equation}
The above can be interpreted in the following way: $I^h_W(u^h) = 0$ if
and only if $u^h$ is an isometric immersion of the 
Riemannian metric $G^h =  (A^h)^{T}(A^h)$. Therefore, the quantity:
\begin{equation}\label{eh}
e_h = \inf\Big\{I^h_W(u^h); ~ u^h\in W^{1,2}(\Omega^h, \mathbb{R}^3)\Big\}
\end{equation}
measures the residual energy at free equilibria of the 
configuration $\Omega^h$ that has been prestrained by $G^h$. This is
consistent with \cite[Theorem 2.2]{LePa1}, which observes that $e_h>0$ whenever $G^h$ has no
smooth isometric immersion in $\mathbb{R}^3$, i.e. when there is no
$u^h$ with (\ref{zero}) or, equivalently, 
when the Riemann curvature tensor 
of the metric $G^h$ does not vanish identically on $\Omega^h$.

\subsection{Growth tensors $A^h$ considered in this paper}

Given now a sequence of growth tensors $A^h$, the main 
objective is to analyze the scaling of the residual energy in
(\ref{eh}) in terms of the thickness $h$, and the asymptotic behavior of the
minimizers of the energies $I^h_W$ as $h\to 0$.    

Note that when $A^h\equiv\mbox{Id}_3$, the model (\ref{IhW}) reduces
to the classical nonlinear elasticity, and it is augmented by the applied
force term $\int_{\Omega^h} f^h u^h$. In this context,  questions of
dimension reduction have been studied
in the seminal papers \cite{FJMgeo, FJMhier} and led to the rigorous
derivation of the hierarchy of elastic $2d$ models, differentiated by
the scaling of $f^h$.
In this paper, we will be concerned with growth tensors $A^h$ which
bifurcate from the Euclidean case $A=\mbox{Id}_3$, and are 
of the form:
\begin{equation}\label{ahform-new}
A^h(x', x_3)=\mathrm{Id}_3 + h^\gamma S_g(x') + h^{\gamma/2}x_3 B_g(x').
\end{equation}
The ``stretching'' and ``bending'' tensors 
$S_g, B_g:\overline\Omega\rightarrow \mathbb{R}^{3\times 3}$ are two given 
smooth matrix fields, while the scaling exponent $\gamma$ belongs to
the range:
$$0<\gamma<2.$$   
The critical cases $\gamma=0,2$ have been analyzed previously, and let
to the fully nonlinear bending model in \cite{LePa1, a4} for
$\gamma=0$, and the von K\'arm\'an-like morphogenesis model \cite{LMP-prs1,
LMP-prs2} for $\gamma=2$.

\smallskip

Observe now that $A^h$ in (\ref{ahform-new}) yields:
\begin{equation*}
G^h(x', x_3)= (A^h)^T (A^h) = \mathrm{Id}_3 + 2h^\gamma \mbox{ sym} S_g(x') +
2h^{\gamma/2}x_3 \mbox{sym} B_g(x') +\mbox{ higher order terms}.
\end{equation*}
Interpreting the term $p_h=\mathrm{Id}_2 + 2h^\gamma (\mbox{sym}
S_g)_{2\times 2} $ as
the first fundamental form of the mid-plate $\Omega$, and
$h^{\gamma/2}(\mbox{sym} B_g)_{2\times 2}$ as its second fundamental
form, the compatibility of these forms through the Gauss-Codazzi
equations at the leading order terms in the expansion in $h$,
is expressed by the following conditions:
\begin{equation}\label{compa} 
\mathrm{curl }\big((\mathrm{sym }~ B_g)_{2\times 2}\big) \equiv
0 \quad \mbox{ and } \quad \displaystyle\mathrm{curl}^T\mathrm{curl}~ ( S_g)_{2\times 2} 
+ \mathrm{det}\big((\mathrm{sym }~ B_g)_{2\times 2}\big)
\equiv 0 \mbox{ in } \Omega,
\end{equation}
Hence, if (\ref{compa}) is violated, then any isometric immersion $u_h:\Omega\to\mathbb{R}^3$
of $p_h$ will have the second fundamental form: $h^{\gamma/2} \Pi\neq
h^{\gamma/2}\mbox{sym} B_g$. Expanding the energy of the deformation: 
\begin{equation}\label{expan}
u^h(x', x_3) = u_h(x') + x_3 N^h(x'), \qquad N^h(x') =\frac{\partial_1
  u_h\times \partial_2 u_h}{|\partial_1 u_h\times \partial_2 u_h|}
\end{equation}
(which is the Kirchhoff-Love extension of $u_h$ in the direction of
the normal vector $N^h$ to the surface $u_h(\Omega)$), and gathering the
remaining terms after the cancellation of $p_h$, we obtain:
$$I_W^h(u^h)\approx \frac{1}{h}\int_{\Omega^h} |(\nabla u^h)^T(\nabla
u^h) - G^h|^2~\mbox{d}x \approx \frac{1}{h}\int_{\Omega^h} 
|2h^{\gamma/2}x_3 \big((\mbox{sym} B_g(x'))_{2\times 2} - \Pi (x')\big)|^2
~\mbox{d}x \approx C h^{\gamma+2}.$$ 
As we shall see,  the scaling $h^{\gamma+2}$ above is sharp, and the
residual 2d energy is indeed given in terms of the square of
difference in the scaled second fundamental forms: $|(\mbox{sym}
B_g)_{2\times 2} - \Pi|^2$. We state our main results in the next subsections.

\subsection{The variational limit with
  Monge-Amp\`ere constraint: case of $1<\gamma<2$}

The main result of this paper is the identification of 
the asymptotic behavior of the minimizers of $I^h_W$ as $h\to 0$, through
deriving the $\Gamma$-limit of the rescaled  energies $
h^{-(\gamma+2)} I_W^h$. This limit, given in the Theorem below,
consists of minimizing the bending content, relative to the ideal
bending $(\mbox{sym} B_g(x'))_{2\times 2} $, under the nonlinear
constraint of the form $\det\nabla^2 v = f$.  Our result, which concerns arbitrary functions $f$, 
is a generalization to the non-Euclidean setting of \cite[Theorem~2] {FJMhier}, where  the degenerate 
Monge-Amp\`ere type constraint ($f\equiv 0$) was rigorously derived in the context of standard nonlinear elasticity.  

\begin{theorem}\label{compactness} 
Let $A^h$ be given as in (\ref{ahform-new}), with an arbitrary
exponent $\gamma$ in the range:
$$0<\gamma<2.$$
Assume that a sequence of deformations $u^h\in W^{1,2}(\Omega^h,\mathbb{R}^3$) satisfies:
\begin{equation} \label{boundinh} 
I^h_W(u^h) \leq Ch^{\gamma+2},
\end{equation} 
where $W$ fulfills (\ref{frame}) and (\ref{Q3}). Then there exist 
rotations $\bar R^h\in SO(3)$ and translations 
$c^h\in\mathbb{R}^3$ such that for the normalized deformations:
$$y^h\in W^{1,2}(\Omega^1,\mathbb{R}^3), \qquad y^h(x',x_3) = (\bar
R^h)^T u^h(x',hx_3) - c^h,$$
the following holds (up to a subsequence that we do not relabel):
\begin{itemize}
\item[(i)] $y^h(x',x_3)$ converge in $W^{1,2}(\Omega^1,\mathbb{R}^3)$ to $x'$.
\item[(ii)] The scaled displacements:
$\displaystyle{V^h(x')=\frac{1}{h^{\gamma/2}}\fint_{-1/2}^{1/2}y^h(x',t) - x'~\mathrm{d}t}$
converge to a vector field $V$ of the form $V = (0,0,v)^T$. This
convergence is strong in $W^{1,2}(\Omega,\mathbb{R}^3)$.  The only non-zero out-of-plane
scalar component $v$ of $V$ satisfies: $v\in
W^{2,2}(\Omega,\mathbb{R})$ and:
\begin{equation}\label{MP-constraint} 
\det{\nabla ^2 v} = - \mathrm{curl}^T\mathrm{curl}~
( S_g)_{2\times 2} \quad \mbox{ in } \Omega.
\end{equation} 
In other words: $v\in\mathcal{A}_f$, where:
$$ \mathcal{A}_f = \left\{ v\in W^{2,2}(\Omega); ~ \det\nabla^2 v =
  f\right\} \quad \mbox{ and } \quad 
 f = - \mathrm{curl}^T\mathrm{curl}~ ( S_g)_{2\times 2}. $$
\item[(iii)] Moreover:
\begin{equation}\label{MA-bd}
\liminf_{h\to 0} \frac{1}{h^{\gamma+2}} I_W^h(u^h) \geq
\mathcal{I}_f(v),
\end{equation}
where $\mathcal{I}_f:W^{2,2}(\Omega)\to\bar{\mathbb{R}}_+$ is given by:
\begin{equation}\label{linpresKirchhoff}
\begin{split}
\mathcal{I}_f(v)= \left\{\begin{array}{ll}
  {\displaystyle \frac{1}{12} \int_\Omega \mathcal{Q}_2\Big(\nabla^2 v 
+ (\mathrm{sym}~  B_g)_{2\times 2}\Big)}, & \mbox{ if } v\in\mathcal{A}_f,\\
+\infty & \mbox{ if } v\not\in\mathcal{A}_f \end{array}\right.
\end{split}
\end{equation}
and the quadratic nondegenerate form $\mathcal{Q}_2$, acting on matrices 
$F\in\mathbb{R}^{2\times 2}$ is:
\begin{equation}\label{defQ}
\mathcal{Q}_2(F) =\min\Big\{\mathcal{Q}_3(\tilde F); ~\tilde F\in\mathbb{R}^{3\times 3},
\tilde F_{2\times 2}= F\Big\}.
\end{equation}
\end{itemize} 
\end{theorem}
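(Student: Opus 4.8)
The plan is to follow the by-now-standard $\Gamma$-convergence scheme for thin-plate dimension reduction of Friesecke--James--M\"uller, adapted to the incompatible (prestrained) setting as in the papers cited in the introduction, with the Monge--Amp\`ere constraint arising precisely from the subcriticality $\gamma<2$ of the stretching. Throughout, write $G^h=(A^h)^T A^h=\mathrm{Id}_3+2h^\gamma\sym S_g+2h^{\gamma/2}x_3\sym B_g+O(h^{2\gamma})$, so that $\sqrt{G^h}=\mathrm{Id}_3+O(h^\gamma)$ in $W^{1,\infty}(\overline{\Omega^h})$, and recall from Lemma~\ref{Q3prop} that $\mathcal Q_3$ is nonnegative, depends only on the symmetric part of its argument, and is positive definite on symmetric matrices. \emph{Step 1 (rigidity and compactness).} From (\ref{boundinh}), (\ref{frame}), (\ref{Q3}) and $F=\nabla u^h(A^h)^{-1}$ one first obtains $\int_{\Omega^h}\mathrm{dist}^2\big(\nabla u^h,\,SO(3)\sqrt{G^h}\big)\le C\int_{\Omega^h}W(F)\le Ch^{\gamma+3}$; it is essential here to measure the distance to the \emph{rotated metric} $\sqrt{G^h}$ (using $SO(3)A^h=SO(3)\sqrt{G^h}$) and not to $SO(3)$, since the latter only gives the weaker $Ch^{2\gamma+1}$, insufficient to control curvatures when $\gamma\le1$. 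As $\sqrt{G^h}$ is $W^{1,\infty}$-close to $\mathrm{Id}_3$, the quantitative geometric rigidity estimate together with the Friesecke--James--M\"uller two-scale (dyadic cube) refinement produces $\bar R^h\in SO(3)$ and $R^h\in W^{1,2}(\Omega,SO(3))$ with
$$\int_{\Omega^h}|\nabla u^h-R^h\sqrt{G^h}|^2\le Ch^{\gamma+3},\qquad \|\nabla R^h\|_{L^2(\Omega)}^2+\|R^h-\bar R^h\|_{L^2(\Omega)}^2\le Ch^{\gamma}.$$
With the rescaling of the statement, $\nabla_h y^h:=(\bar R^h)^T\nabla u^h(\cdot,h\cdot)$ satisfies $\|\nabla_h y^h-\mathrm{Id}_3\|_{L^2(\Omega^1)}^2\le Ch^\gamma\to0$, and (i) follows from the Poincar\'e inequality.

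\emph{Step 2 (limit displacement and Monge--Amp\`ere constraint).} Put $Q^h=(\bar R^h)^TR^h$ and $q^h=h^{-\gamma/2}(Q^h-\mathrm{Id}_3)$; the estimates above make $q^h$ bounded in $W^{1,2}(\Omega,\R^{3\times3})$, and $Q^h\in SO(3)$ forces $\sym q^h\to0$, so along a subsequence $q^h\rightharpoonup q$ weakly in $W^{1,2}$ with $q$ skew-valued. Inspecting the scaled displacement components one finds $\nabla v^h\to(q_{31},q_{32})$ for $v^h:=V^h_3$, hence $v\in W^{2,2}(\Omega)$ with $\nabla^2v=\nabla(q_{31},q_{32})$ bounded in $L^2$, and $v^h\to v$ strongly in $W^{1,2}$ by Rellich; the in-plane part of $V^h$ equals $h^{\gamma/2}$ times a $W^{1,2}$-bounded field, hence converges strongly to $0$, giving $V=(0,0,v)^T$. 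To obtain the constraint, expand the first fundamental form of the rescaled midsurface: its elastic stretching part equals $h^\gamma\big(\sym\nabla w^h+\tfrac12\nabla v^h\otimes\nabla v^h-(\sym S_g)_{2\times2}\big)+o(h^\gamma)$ in $L^2(\Omega)$, and since $h^{-(\gamma+2)}\int_{\Omega^h}|\,\cdot\,|^2$ would diverge like $h^{\gamma-2}$ unless this bracket tends to $0$, the bound (\ref{boundinh}) forces $\sym\nabla w^h+\tfrac12\nabla v^h\otimes\nabla v^h-(\sym S_g)_{2\times2}\to0$ in $L^2(\Omega)$. Passing to the limit (with $w^h\rightharpoonup w$ in $W^{1,2}$ by Korn after subtracting an infinitesimal rigid motion, and $\nabla v^h\to\nabla v$ in $L^4$ by the compact embedding $W^{2,2}\hookrightarrow\hookrightarrow W^{1,4}$ in $2$d) gives $\sym\nabla w+\tfrac12\nabla v\otimes\nabla v=(\sym S_g)_{2\times2}$. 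Applying $\curl^T\curl$, which annihilates symmetrised gradients, together with the identity $\curl^T\curl\big(\tfrac12\nabla v\otimes\nabla v\big)=-\det\nabla^2v$ — valid in $\mathcal D'(\Omega)$ for $v\in W^{2,2}$ by density, both sides lying in $L^1$ — yields $\det\nabla^2v=-\curl^T\curl\,(S_g)_{2\times2}=f$, i.e.\ (\ref{MP-constraint}) and $v\in\mathcal A_f$.

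\emph{Step 3 (lower bound).} By frame indifference, factoring out on the left the rotation $Q^h(\tilde R^h_A)^{-1}$ (where $\tilde A^h=A^h(\cdot,h\cdot)=\tilde R^h_A\sqrt{\tilde G^h}$ is the polar decomposition), the rescaled energy density equals $W(\mathrm{Id}_3+E^h)$ with $\int_{\Omega^1}|E^h|^2\le Ch^{\gamma+2}$, $E^h$ built from the rigidity defect $\nabla u^h-R^h\sqrt{G^h}$ and from rotations converging to $\mathrm{Id}_3$. Then (\ref{Q3}) gives $h^{-(\gamma+2)}I^h_W(u^h)=\int_{\Omega^1}\mathcal Q_3\big(h^{-(\gamma/2+1)}\sym E^h\big)+o(1)$. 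The rescaled strain $\mathcal E^h:=h^{-(\gamma/2+1)}\sym E^h$ is bounded in $L^2(\Omega^1)$; differentiating in $x_3$ and using the Kirchhoff--Love structure recorded in Step~1 together with $\partial_3A^h=h^{\gamma/2}B_g$, one shows that the $x_3$-linear part (the $L^2(-1/2,1/2)$-projection onto $x_3$) of any weak limit $\mathcal E$ equals $-\big(\nabla^2v+(\sym B_g)_{2\times2}\big)$, extended by zeros to $\R^{3\times3}$. Weak lower semicontinuity of $F\mapsto\int\mathcal Q_3(\sym F)$, the bound $\mathcal Q_3(F)\ge\mathcal Q_2(F_{2\times2})$ from (\ref{defQ}), and $\int_{-1/2}^{1/2}x_3\,\mathrm dx_3=0$, $\int_{-1/2}^{1/2}x_3^2\,\mathrm dx_3=\tfrac1{12}$ then give
$$\liminf_{h\to0}\frac{1}{h^{\gamma+2}}I^h_W(u^h)\ \ge\ \int_{\Omega^1}\mathcal Q_2\big(\mathcal E_{2\times2}\big)\ \ge\ \frac1{12}\int_\Omega\mathcal Q_2\big(\nabla^2v+(\sym B_g)_{2\times2}\big)\ =\ \mathcal I_f(v),$$
the middle inequality because $\mathcal Q_2\ge0$ and the $x_3$-linear part is orthogonal to the remainder, and the last equality because $v\in\mathcal A_f$; this is (\ref{MA-bd}).

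I expect the main obstacle to be Step~2, the derivation of the Monge--Amp\`ere constraint: it hinges on the sharp rigidity scaling $h^{\gamma+3}$, available only through the prestrain-corrected distance; on upgrading the $W^{1,2}$-compactness of $v^h$ to $L^4$-convergence of $\nabla v^h$, so as to pass to the limit in the quadratic term $\nabla v^h\otimes\nabla v^h$; and on the distributional compensated-compactness identity for $\curl^T\curl$ of a symmetric rank-one product, which makes sense because $\det\nabla^2v$ is merely $L^1$ for $v\in W^{2,2}$. The identification of the $x_3$-linear profile of the limiting strain in Step~3 is of the same nature as, though somewhat more delicate than, the corresponding step in the von K\'arm\'an case $\gamma=2$.
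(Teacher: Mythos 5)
Your proposal follows the same high-level route as the paper's proof: the prestrain-corrected geometric rigidity estimate giving the scaling $\int_{\Omega^h}|\nabla u^h - R^h A^h|^2 \le Ch^{\gamma+3}$, extraction of the limiting out-of-plane displacement $v$ through the skew-valued limit of the rescaled rotation increment $q^h=h^{-\gamma/2}\big((\bar R^h)^T R^h-\mathrm{Id}\big)$, derivation of the Monge--Amp\`ere constraint by applying $\mathrm{curl}^T\mathrm{curl}$ to the limit of the scaled symmetrised in-plane stretching, and the lower bound via Taylor expansion of $W$, weak lower semicontinuity of $\mathcal Q_3$, and the decomposition of the rescaled strain into $x_3$-constant and $x_3$-linear parts with $\mathcal Q_3\ge\mathcal Q_2$. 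There is, however, a genuine gap in Step~2.

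You pass to the limit in $\nabla v^h\otimes\nabla v^h$ by invoking ``$\nabla v^h\to\nabla v$ in $L^4$ by the compact embedding $W^{2,2}\hookrightarrow\hookrightarrow W^{1,4}$''. This does not apply as stated: $v^h:=V^h_3$ is bounded only in $W^{1,2}(\Omega)$, because the identification $\nabla V^h=D^h_{3\times 2}+O(h)$ holds only in $L^2$ while $D^h$ itself is merely bounded in $W^{1,2}$ and converges weakly there; the limit $v$ lies in $W^{2,2}$, but nothing forces the approximating $v^h$ to be bounded in $W^{2,2}$. The paper circumvents this by never touching $\nabla v^h\otimes\nabla v^h$ directly. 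Instead (cf.~(\ref{m5})) it uses the algebraic identity $\mathrm{sym}\,(Q^h-\mathrm{Id})=-\tfrac12(Q^h-\mathrm{Id})^T(Q^h-\mathrm{Id})$ valid for $Q^h\in SO(3)$, so that the quadratic term arises as $-\tfrac12(q^h)^Tq^h$ with $q^h$ genuinely bounded in $W^{1,2}(\Omega)$ and hence strongly convergent in every $L^q$, and the identity $(D^2)_{2\times 2}=-\nabla v\otimes\nabla v$ is used only at the level of the limit. Your argument can be repaired along exactly these lines by replacing $\nabla v^h$ with $q^h_{3\cdot}=(q^h_{31},q^h_{32})$, noting $\nabla v^h-q^h_{3\cdot}=O(h)$ in $L^2$. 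A smaller imprecision is the claim that ``the in-plane part of $V^h$ equals $h^{\gamma/2}$ times a $W^{1,2}$-bounded field'': only $\mathrm{sym}\nabla V^h_{tan}=O(h^{\gamma/2})$ in $L^2$ is available, and the antisymmetric (in-plane rotation) part may tend to zero more slowly; the conclusion $V_{tan}=0$ is instead obtained, as in the paper, from the skew-symmetry of the limit $D$, Korn's inequality, and the normalisation~(\ref{m22}).
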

  
The result above can be interpreted as follows. The smallness of the energy
scaling in (\ref{boundinh}) relative to  the scaling in
(\ref{ahform-new}), induces the deformations $u_h(x')  = u^h(x', 0)$
of the mid-plate $\Omega$ to be perturbations of
a rigid motion:
\begin{equation}\label{rs1}
u_h(x') = x' + h^{\gamma/2} v(x')e_3 + \mbox{higher order terms}.
\end{equation}
Moreover, the Gaussian curvatures $\kappa$ of 
the metric $p_h = \mbox{Id}_2 + 2h^\gamma (\mbox{sym} S_g)_{2\times
  2}$ and of the surface $u_h(\Omega)$ coincide at their highest
order in the expansion in terms of $h$. This is precisely
the meaning of the constraint (\ref{MP-constraint}), in view of the formulas:
\begin{equation}\label{curv} 
\begin{split}
& \kappa\big(\mbox{Id}_{2} + 2\epsilon^2 (\mbox{sym } S_g)_{2\times 2}\big)
= - \epsilon^2\mathrm{curl}^T\mathrm{curl}~
( S_g)_{2\times 2} +\mathcal{O}(\epsilon^4) \\
& \kappa\big(\nabla (\mbox{id}_2 +\epsilon ve_3)^T \nabla (\mbox{id}_2 +\epsilon ve_3)\big)
= \epsilon^2\det\nabla^2v +\mathcal{O}(\epsilon^4).
\end{split}
\end{equation} 
All other curvatures, besides $\kappa$, contribute to the limiting
energy $\mathcal{I}_f$. Indeed, $\mathcal{I}_f$ measures the $L^2$
difference between the full second fundamental forms: the form $h^{\gamma/2} (\mbox{sym }
B_g)_{2\times 2}$ deduced from $A^h$, and that of the surface
$u_h(\Omega)$ given by: 
$$(\nabla u_h)^T\nabla N^h = -h^{\gamma/2} \nabla^2 v + \mbox{
  higher order terms}.$$

\medskip

We now turn to the optimality of the energy bound in
(\ref{MA-bd}) and of the scaling (\ref{boundinh}).

\begin{theorem}\label{limsup}
Assume (\ref{ahform-new}), (\ref{frame}) and
(\ref{Q3}). Moreover, assume that  $\Omega$ is simply connected and:
\begin{equation*}
1<\gamma<2.
\end{equation*}
Then, for every $v\in \mathcal{A}_f$, there exists a sequence of deformations 
$u^h\in W^{1,2}(\Omega^{h},\mathbb{R}^3)$ such that the following holds:
\begin{itemize}
\item[(i)] The sequence 
$y^h(x',x_3) = u^h(x',hx_3)$ converges in $W^{1,2}(\Omega^1,\mathbb{R}^3)$ to $x'$.
\item[(ii)] $\displaystyle V^h(x') = h^{-\gamma/2}\fint_{-h/2}^{h/2}(u^h(x',t) - x')~\mathrm{d}t$ 
converge in $W^{1,2}(\Omega,\mathbb{R}^3)$ to $(0,0,v)^T$.
\item[(iii)]  One has:
$\displaystyle \lim_{h\to 0} \frac{1}{h^{\gamma+2}} I_W^h(u^h) =
\mathcal{I}_f(v)$, where $\mathcal{I}_f$ is as in (\ref{linpresKirchhoff}).
\end{itemize}
\end{theorem}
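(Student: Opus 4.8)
\emph{Proof strategy.} The plan is to produce, for a given $v\in\mathcal{A}_f$, an explicit recovery sequence, after first reducing to the case of a regular $v$. I would begin by appealing to a density/approximation property of the constraint set: every $v\in\mathcal{A}_f$ is the $W^{2,2}(\Omega)$-limit of maps $v_n\in\mathcal{A}_f$ that are smooth (or at least $W^{3,2}$) up to the boundary. Since $\mathcal{I}_f$ depends on $v$ only through $\nabla^2v$ via (\ref{linpresKirchhoff}), one then has $\mathcal{I}_f(v_n)\to\mathcal{I}_f(v)$, and a diagonal extraction, combined with the lower bound (iii) of Theorem \ref{compactness} (which upgrades the resulting upper bound to the equality (iii) of Theorem \ref{limsup}), reduces the statement to $v\in\mathcal{A}_f\cap C^\infty(\overline\Omega)$. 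For such $v$, the Monge-Amp\`ere constraint is exactly the integrability condition needed below: using $\curl^T\curl\big(\frac12\nabla v\otimes\nabla v\big)=-\det\nabla^2 v$ together with $\det\nabla^2 v=f=-\curl^T\curl(S_g)_{2\times 2}$ and the fact that $\curl^T\curl$ annihilates skew matrices, one gets $\curl^T\curl\big((\sym S_g)_{2\times 2}-\frac12\nabla v\otimes\nabla v\big)=0$, so on the simply connected $\Omega$ there is a smooth $w\colon\Omega\to\R^2$ with $\sym\nabla w=(\sym S_g)_{2\times 2}-\frac12\nabla v\otimes\nabla v$. This $w$ is the in-plane correction that matches the first fundamental form of the deformed mid-plate to $p_h=\mathrm{Id}_2+2h^\gamma(\sym S_g)_{2\times 2}$ up to order $h^\gamma$.

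Next I would set up the three-dimensional ansatz. Take the mid-plate deformation $\phi^h(x')=(x',0)^T+h^{\gamma/2}v(x')\,e_3+h^\gamma(w(x'),0)^T$, and define $u^h$ as the Kirchhoff--Love extension of $\phi^h$ along its unit normal $N[\phi^h]$, corrected by a linear and a quadratic term in $x_3$:
\[
u^h(x',x_3)=\phi^h(x')+x_3\big(N[\phi^h](x')+h^{\gamma/2}d^h(x')\big)+\frac{x_3^2}{2}\,h^{\gamma/2}\chi^h(x'),
\]
with smooth fields $d^h,\chi^h$ depending affinely on $\nabla^2 v$ and on $B_g$. They are chosen so that, after right-multiplying $\nabla u^h$ by $(A^h)^{-1}$ and left-multiplying by a rotation field $R^h(x')=\exp\!\big(h^{\gamma/2}W_v(x')\big)$ that compensates the infinitesimal bending of $\phi^h$ (here $W_v$ is the skew matrix with $W_v e_\alpha=(\partial_\alpha v)e_3$), the nonlinear strain $\sym\big((R^h)^T\nabla u^h(A^h)^{-1}-\mathrm{Id}_3\big)$ has no contribution at order $h^{\gamma/2}$ (that term is skew), no contribution at order $h^\gamma$ (cancelled by the choice of $w$ and $d^h$), and at order $h^{\gamma/2}x_3$ it equals $h^{\gamma/2}x_3\,F^*(x')$, where $F^*\in\R^{3\times3}$ realizes the minimum in (\ref{defQ}) with $F^*_{2\times2}=-\big(\nabla^2 v+(\sym B_g)_{2\times 2}\big)$. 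The key point is that the only mismatch between $(\nabla u^h)^T\nabla u^h$ and $G^h$ surviving at leading order is the difference of scaled second fundamental forms $-2h^{\gamma/2}x_3\big(\nabla^2 v+(\sym B_g)_{2\times 2}\big)$, precisely the quantity measured by $\mathcal{I}_f$; items (i) and (ii) are immediate from the explicit form of $u^h$.

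For the energy I would Taylor-expand $W$ near $SO(3)$ via (\ref{Q3}): with $F^h:=(R^h)^T\nabla u^h(A^h)^{-1}$, frame indifference gives $W(\nabla u^h(A^h)^{-1})=W(F^h)=\mathcal{Q}_3\big(\sym(F^h-\mathrm{Id}_3)\big)+\omega(|F^h-\mathrm{Id}_3|)\,|F^h-\mathrm{Id}_3|^2$, and by the construction above $\sym(F^h-\mathrm{Id}_3)=h^{\gamma/2}x_3\,F^*+(\text{lower order})$. Since $\mathcal{Q}_3(F^*)=\mathcal{Q}_2\big(\nabla^2 v+(\sym B_g)_{2\times 2}\big)$ and $\frac1h\int_{-h/2}^{h/2}x_3^2\,\mathrm dx_3=\frac{h^2}{12}$, integrating over $\Omega^h$ produces exactly $h^{\gamma+2}\,\mathcal{I}_f(v)$ as the leading term. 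It then remains to show that all remainders are $o(h^{\gamma+2})$: the odd/even structure in $x_3$ kills the cross term between the bending strain and the $x_3$-even errors; the $x_3^2$-order terms coming from $(A^h)^{-1}$, from $\chi^h$ and from $\partial_\alpha\chi^h$ contribute $O(h^{\gamma+4})$ or less; the $h^{2\gamma}$-order terms produced by $|\nabla w|^2$ and $S_g^TS_g$ contribute $O(h^{4\gamma})$; and the nonlinear remainder is bounded by $\omega(Ch^{\gamma/2})\to 0$ times a bounded multiple of $h^{\gamma+2}$. This is where the smoothness of $v$ (so that $\nabla^2 v,\nabla^3 v$ and $w$ are bounded) and the range $1<\gamma<2$ enter: $\gamma<2$ makes the bending term dominate, and $\gamma>1$ keeps the stretching- and warping-induced errors subcritical.

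The step I expect to be the genuine obstacle is the initial density reduction. Elements of $\mathcal{A}_f$ are only $W^{2,2}$, whereas the construction uses $\nabla^3 v$ (inside $d^h$ and $\chi^h$) as a legitimate field and, crucially, needs the Monge-Amp\`ere constraint to hold \emph{exactly}: a nonzero residual $f-\det\nabla^2 v$ would force an $h^{2\gamma}$-order strain into the expansion, and $h^{2\gamma}$ is \emph{larger} than $h^{\gamma+2}$ when $\gamma<2$, so it could not be absorbed as an error. One therefore needs smooth — or $W^{3,2}$ — maps lying exactly in $\mathcal{A}_f$ to be $W^{2,2}$-dense in $\mathcal{A}_f$. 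If such a density statement is not directly at hand, the alternative is to mollify $v$ at a scale $\epsilon=\epsilon(h)$, repair the defect of the constraint by adding an Airy-type stress $E_\epsilon=\mathrm{cof}\,\nabla^2\phi_\epsilon$ with $\Delta^2\phi_\epsilon=f-\det\nabla^2 v_\epsilon$ to the right-hand side of the equation for $w$, and choose $\epsilon(h)\to 0$ slowly enough that $\|E_{\epsilon(h)}\|$ renders the extra strain negligible while still $v_{\epsilon(h)}\to v$ in $W^{2,2}$ and $\epsilon(h)\gg h$. In either case, reconciling the exactness of the constraint with the regularity demanded by the ansatz is the heart of the matter.
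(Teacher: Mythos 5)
Your overall ansatz (an in-plane correction $w$ solving $\sym\nabla w=(\sym S_g)_{2\times 2}-\tfrac12\nabla v\otimes\nabla v$, then a Kirchhoff--Love type extension with linear and quadratic warpings) matches the paper's construction, and your energy expansion is in the right spirit. The genuine issue is exactly the one you flag: how to handle a $v$ that is only $W^{2,2}$. You propose to reduce to smooth $v$ by a density argument, or failing that to mollify and repair the constraint with an Airy potential. Neither route is available here. The density of $\mathcal{C}^{2,\beta}$ (or $W^{3,2}$) solutions of $\det\nabla^2 v=f$ inside the $W^{2,2}$ solution set is an open and delicate question; the paper invokes such a density result (from \cite{LMP-arma}) only in Theorem~\ref{limsup2}, and only for star-shaped $\Omega$ and \emph{constant} $f$, precisely because it is not known in the general setting of Theorem~\ref{limsup}. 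Your mollification-plus-Airy alternative is also unsound as stated: after mollifying, the defect $f-\det\nabla^2 v_\epsilon$ can be repaired by adding an $h^\gamma E_\epsilon$ stretching term, but this contributes $O(h^{2\gamma}\|E_\epsilon\|_{L^2}^2)$ to the energy, and since $h^{2\gamma}\gg h^{\gamma+2}$ for $\gamma<2$ you would need a quantitative decay $\|E_\epsilon\|_{L^2}\lesssim\epsilon^\alpha$ for some $\alpha>0$ to beat this by slaving $\epsilon$ to $h$; for $v\in W^{2,2}$ only, $\det\nabla^2 v_\epsilon\to f$ merely in $L^1$ and no such rate is available.

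The paper sidesteps the whole issue by \emph{truncation} rather than \emph{approximation in the constraint set}. It solves for $w$ at the level of the rough $v$ (using $\nabla v\in W^{1,q}$ for $q<\infty$, hence $\sym\nabla w\in W^{1,p}$ for $p<2$), and then uses a Lusin-type result (\cite[Proposition~2]{FJMhier}) to produce $v^h\in W^{2,\infty}$ and $w^h\in W^{2,\infty}$ with $\|v^h\|_{W^{2,\infty}}+\|w^h\|_{W^{2,\infty}}\le Ch^{-\lambda}$ that agree with $v,w$ outside a ``bad set'' $O_h$ of measure $o(h^{p\lambda})$. The Monge--Amp\`ere constraint then holds \emph{exactly} on the good set $\mathcal{U}_h$, and on $O_h$ the stretching residual $s(v^h,w^h)$ is estimated in $L^\infty$ and $L^2$ using the blow-up rate $h^{-\lambda}$ against the smallness of $|O_h|$, yielding $\|s(v^h,w^h)\|_{L^2}^2=o(h^{2\lambda(p-1)})$; the choice of $\lambda\in\big(\tfrac{2-\gamma}{2(p-1)},\tfrac{\gamma}{2}\big)$, which requires $\gamma>1$, ensures $2\lambda(p-1)+2\gamma>\gamma+2$, so this residual is absorbed. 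You should incorporate this truncation device: it is the mechanism that makes the range $1<\gamma<2$ work without any a priori regularity of $v$ and without a density hypothesis, and it is where the hypothesis $\gamma>1$ actually gets used.
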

 
\begin{theorem}\label{minsconverge}
Assume (\ref{ahform-new}), (\ref{frame}), (\ref{Q3}). Let $\Omega$
be simply connected and let $1<\gamma<2$. Then:
\begin{itemize} 
\item[(i)]  $\mathcal {A}_f \neq \emptyset$ if and only if there exists a uniform constant $C \geq 0 $ such that:
%\begin{equation}\label{bounds} 
$$e_h = \inf I^h_W \le C h^{\gamma+2}.   $$
%\end{equation}
\item[] Under this condition, for any minimizing sequence 
$u^h\in W^{1,2}(\Omega^h,\mathbb{R}^3)$ for $I^h_W$, i.e. when:
\begin{equation}\label{approxmin}
\lim_{h\to 0} \frac 1{h^{\gamma+2}}\left( I^h_W (u^h) - \inf I^h_W \right ) = 0,  
 \end{equation} the convergences (i), (ii)  
of Theorem \ref{compactness} hold up to a subsequence, and the limit $v$ is a minimizer 
of the functional $\mathcal I_f$ defined as in (\ref{linpresKirchhoff}).

Moreover, for any (global) minimizer $v$ of ${\mathcal I}_f$, there exists 
a minimizing sequence $u^h$, satisfying (\ref{approxmin}) together with
(i), (ii) and (iii) of Theorem \ref{limsup}.
\item[(ii)] If (\ref{compa}) is violated, i.e. when:
\begin{equation}\label{lincondition}
\mathrm{curl }\big((\mathrm{sym }~ B_g)_{2\times 2}\big) \not\equiv 0, 
\quad \mbox{ or } \quad
\displaystyle\mathrm{curl}^T\mathrm{curl}~ ( S_g)_{2\times 2} 
+ \mathrm{det}\big((\mathrm{sym }~ B_g)_{2\times 2}\big) \not\equiv 0,
\end{equation}
then:
$$\exists c>0 \qquad \inf I^h_W \geq c h^{\gamma+2}. $$
 \end{itemize}
\end{theorem}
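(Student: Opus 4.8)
The plan is to derive Theorem~\ref{minsconverge} as the by-now standard consequence of the $\Gamma$-convergence already in hand: the compactness and lower bound of Theorem~\ref{compactness} on the one hand, and the recovery sequences of Theorem~\ref{limsup} on the other (this is where the hypotheses ``$\Omega$ simply connected'' and ``$1<\gamma<2$'' enter). Throughout I work in the admissible range $0<h\ll 1$.

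For part~(i), the ``if'' implication follows at once from Theorem~\ref{compactness}: if $e_h\le Ch^{\gamma+2}$, choose $u^h$ with $I^h_W(u^h)\le e_h+h^{\gamma+2}$, so that (\ref{boundinh}) holds, and then conclusion~(ii) of Theorem~\ref{compactness} produces a function $v\in\mathcal{A}_f$, whence $\mathcal{A}_f\neq\emptyset$. For the ``only if'' implication, pick any $v\in\mathcal{A}_f$ and let $u^h$ be the recovery sequence of Theorem~\ref{limsup}, so that $I^h_W(u^h)=\big(\mathcal{I}_f(v)+o(1)\big)h^{\gamma+2}$; since $e_h$ is finite for each fixed $h$ (e.g. by testing with $u^h(x)=x$, $W$ being real-valued and $A^h$ smooth with positive determinant on the compact $\overline{\Omega^h}$), the bound $e_h\le Ch^{\gamma+2}$ holds with a uniform constant. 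Assume now $\mathcal{A}_f\neq\emptyset$ and take a minimizing sequence $u^h$ as in (\ref{approxmin}); then $I^h_W(u^h)=\inf I^h_W+o(h^{\gamma+2})\le C'h^{\gamma+2}$, so Theorem~\ref{compactness} applies and gives, along a subsequence, the convergences (i), (ii) to some $v\in\mathcal{A}_f$ and $\liminf_{h\to0}h^{-(\gamma+2)}I^h_W(u^h)\ge\mathcal{I}_f(v)$. Comparing against an arbitrary $w\in\mathcal{A}_f$ through its recovery sequence $\tilde u^h$,
\[
\mathcal{I}_f(v)\ \le\ \liminf_{h\to0}\frac{I^h_W(u^h)}{h^{\gamma+2}}\ =\ \liminf_{h\to0}\frac{\inf I^h_W}{h^{\gamma+2}}\ \le\ \lim_{h\to0}\frac{I^h_W(\tilde u^h)}{h^{\gamma+2}}\ =\ \mathcal{I}_f(w),
\]
so $v$ minimizes $\mathcal{I}_f$, and in addition $h^{-(\gamma+2)}\inf I^h_W\to\min\mathcal{I}_f$. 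Conversely, given a global minimizer $v$ of $\mathcal{I}_f$, the recovery sequence $u^h$ of Theorem~\ref{limsup} already satisfies its (i)--(iii) and, by the last identity, $h^{-(\gamma+2)}\big(I^h_W(u^h)-\inf I^h_W\big)\to\mathcal{I}_f(v)-\min\mathcal{I}_f=0$, i.e. (\ref{approxmin}) holds.

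For part~(ii), observe first that if $\mathcal{A}_f=\emptyset$ then the compactness argument of part~(i) forces $h^{-(\gamma+2)}e_h\to\infty$, which is stronger than the asserted lower bound; so we may assume $\mathcal{A}_f\neq\emptyset$, in which case part~(i) already gives $h^{-(\gamma+2)}\inf I^h_W\to\min\mathcal{I}_f$, and it remains only to show that (\ref{lincondition}) implies $\min\mathcal{I}_f>0$. Suppose to the contrary that $\mathcal{I}_f(v)=0$ for some $v\in\mathcal{A}_f$. Since $\mathcal{Q}_2$ inherits from $\mathcal{Q}_3$ positive-definiteness on symmetric matrices and dependence only on the symmetric part, while $\nabla^2 v+(\mathrm{sym}~B_g)_{2\times2}$ is symmetric, this forces $\nabla^2 v=-(\mathrm{sym}~B_g)_{2\times2}$ a.e. in $\Omega$. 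Consequently $-(\mathrm{sym}~B_g)_{2\times2}$ is a Hessian, so (using that $\Omega$ is simply connected and $B_g$ is smooth) $\mathrm{curl}\big((\mathrm{sym}~B_g)_{2\times2}\big)\equiv0$; and taking determinants, $\det\big((\mathrm{sym}~B_g)_{2\times2}\big)=\det\nabla^2 v=f=-\mathrm{curl}^T\mathrm{curl}~(S_g)_{2\times2}$, which is the second identity in (\ref{compa}). Hence $\mathcal{I}_f$ can attain the value $0$ only if (\ref{compa}) holds; under (\ref{lincondition}) we therefore have $\min\mathcal{I}_f=:c_0>0$, and thus $\inf I^h_W\ge\frac{c_0}{2}h^{\gamma+2}$ for all small $h$.

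The main obstacle is part~(ii), and within it the two structural points: passing from $\mathcal{Q}_2\big(\nabla^2 v+(\mathrm{sym}~B_g)_{2\times2}\big)\equiv0$ to the pointwise identity $\nabla^2 v=-(\mathrm{sym}~B_g)_{2\times2}$ (where the nondegeneracy of $\mathcal{Q}_3$ on symmetric matrices, recorded before the statement of Theorem~\ref{compactness}, is used), and the characterization of symmetric Hessians on a simply connected domain, from which precisely the two conditions of (\ref{compa}) have to be read off. Everything else is the routine deduction of convergence of (almost) minimizers from $\Gamma$-convergence, together with the harmless verification that the energy bounds, established for all small $h$, hold with a single constant over the whole admissible range.
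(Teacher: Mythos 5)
Your argument is correct and follows essentially the same route as the paper: the standard $\Gamma$-convergence deductions from the compactness/lower-bound result (Theorem~\ref{compactness}) and the recovery sequences of Theorem~\ref{limsup}, together with the needed direction of Lemma~\ref{GCM} — namely that $\mathcal{I}_f(v)=0$ forces $\nabla^2 v=-(\mathrm{sym}\,B_g)_{2\times 2}$ (by positive-definiteness of $\mathcal{Q}_2$ on symmetric matrices) and hence both conditions of \eqref{compa}. The only point you slide over a little quickly is that $h^{-(\gamma+2)}\inf I^h_W\to\min\mathcal{I}_f$ along the \emph{whole} sequence, not merely the subsequence furnished by compactness, but the usual subsequence-of-subsequences (Urysohn) argument supplies this immediately and the paper treats it as implicit as well.
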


The conditions in  \eqref{lincondition}  guarantee that the highest order terms in 
the expansion of the Riemann curvature tensor components 
$R_{1213}$, $R_{2321}$ and $R_{1212}$ of $G^h=(A^h)^TA^h$ do not
vanish. Also, vanishing of either of them implies that $\inf \mathcal {I}_f >0$
(see Lemma \ref{GCM}),  
which combined with Theorem \ref{compactness} yields the lower bound on $\inf I^h_W$.  
The mechanical significance of these components of the curvature
tensor is not known to the authors, but it seems that certain
components have a more important role in determining 
the energy scaling; compare with  \cite[Theorems 4.1, 4.3, 4.5]{a4}.  
 
The scaling analysis in  Theorem
\ref{minsconverge} is new, and in particular it does not follow from
our prior results in  \cite{LMP-prs1}, valid for another family
of growth tensors $A^h$ than (\ref{ahform-new}). In a sense, the
scaling exponents $\gamma$, $\gamma/2$ and $\gamma+2$ pertain to the
critical case in \cite[Theorem 1.1] {LMP-prs1}, and thus the results in Theorem \ref{minsconverge}
and  Theorem \ref{compactness} are also optimal from this point of view.

\subsection{The matching property: a full range case of $0<\gamma<2$} 
It is clear from Theorem \ref{compactness} that the recovery sequence $u^h$
in Theorem \ref{limsup} will have the form (\ref{expan}), with $u_h$
as in (\ref{rs1}). We can write this expansion with more precision, including a
higher order correction $w_h:\Omega\to\mathbb{R}^3$:
\begin{equation}\label{rs2}
u_h(x') = x' + h^{\gamma/2} v(x')e_3 + h^\gamma w_h + \mbox{higher order terms}.
\end{equation}
In order to match the ideal metric $p_h=\mbox{Id}_2+2h^\gamma
(\mbox{sym }S_g)_{2\times 2}$ with the metric induced by $u_h$:
\begin{equation}\label{metrica}
(\nabla u_h)^T(\nabla u_h) = \mbox{Id}_2 + 2h^\gamma \Big(\frac{1}{2}\nabla
v\otimes\nabla v + \sym\nabla w_h\Big) +\mathcal{O}(h^{3\gamma/2}),
\end{equation}
one hence needs that: 
\begin{equation}\label{haha}
-\sym\nabla w_h = \frac{1}{2}\nabla v\otimes\nabla v - (\mbox{sym
}S_g)_{2\times 2}.
\end{equation}
On a simply connected domain $\Omega$, equation (\ref{haha}) is
solvable in terms of $w_h$ if and only if the tensor in its right hand
side belongs to the kernel of the operator $\mbox{curl}^T\mbox{curl}$,
which becomes: 
% QUESTION: Do we have an example at all that the equivalence
%  fails for not simply-connected domains? I think it's interesting to
%  have that at hand.
$$0 = \mbox{curl}^T\mbox{curl} \Big(\frac{1}{2}\nabla v\otimes\nabla v - (\mbox{sym
}S_g)_{2\times 2} \Big) = -\det\nabla^2v - \mbox{curl}^T\mbox{curl} (S_g)_{2\times 2}, $$
and is readily satisfied in view of (\ref{MP-constraint}). 
It follows from careful calculations in the proof of Theorem
\ref{limsup} that the constraint (\ref{MP-constraint}) allows
precisely for the existence of a correction $w_h$ in (\ref{rs2}) so that the discrepancy
of the metrics in $p_h$ and (\ref{metrica}) does not exceed the
residual energy bound (\ref{boundinh}), when $\gamma$ is in the range
$1<\gamma<2$.
In order to cover a larger range of $\gamma$, one needs hence to
``improve'' the recovery sequence (\ref{rs2}) towards matching the
metrics in (\ref{metrica}) and the metrics $G^h(\cdot, x_3=0) = p_h + \mbox{
  higher order terms}$, with a better accuracy. This is the content of
our next result (see \cite[Theorem 7]{FJMhier} for a parallel result
valid in the degenerate case $S_g\equiv 0$). 

\begin{theorem}\label{matching}
Assume that $\Omega$ is simply connected and that
$-\mathrm{curl}^T\mathrm{curl} (S_g)_{2\times 2}\geq c>0$ in $\Omega$.
For $0<\beta<1$ let $v\in\mathcal{C}^{2,\beta}(\bar\Omega,\mathbb{R})$ satisfy:
\begin{equation*}
\det\nabla^2 v = -\mathrm{curl}^T\mathrm{curl} (S_g)_{2\times 2} \quad \mbox{ in } \Omega.
\end{equation*} 
Let $s_\epsilon:\Omega\to\mathbb{R}^{2\times 2}_{sym}$ be a given 
sequence of smooth symmetric tensor fields, such that: $\sup
\|s_\epsilon\|_{\mathcal{C}^{1,\beta}} < +\infty$.
Then there exists a sequence
$w_\epsilon\in\mathcal{C}^{2,\beta}(\bar\Omega,\mathbb{R}^3)$, such that:
\begin{equation}\label{metric}
\forall \epsilon>0 \quad \nabla(\mathrm{id}_2 + \epsilon ve_3 + \epsilon^2w_\epsilon)^T
\nabla(\mathrm{id}_2 + \epsilon ve_3 + \epsilon^2w_\epsilon) =
\mathrm{Id}_2 + 2\epsilon^2 (\mathrm{sym}~S_g)_{2\times 2} + \epsilon^3s_\epsilon,
\end{equation}
and: $\sup \|w_\epsilon\|_{\mathcal{C}^{2,\beta}}<+\infty$.
\end{theorem}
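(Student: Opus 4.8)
The plan is to solve the equation \eqref{metric} perturbatively, by first reducing it to a linear system and then correcting the residual error with an implicit function theorem (or fixed-point) argument at fixed $\epsilon$, exploiting the positivity assumption $-\mathrm{curl}^T\mathrm{curl}(S_g)_{2\times 2}\geq c>0$. Expanding the left-hand side of \eqref{metric} with $u_\epsilon = \mathrm{id}_2 + \epsilon v e_3 + \epsilon^2 w_\epsilon$ and writing $w_\epsilon = w_0 + \epsilon \tilde w_\epsilon$, the order-$\epsilon^2$ terms force the relation $\mathrm{sym}\nabla w_0 = -\frac12\nabla v\otimes\nabla v + (\mathrm{sym}\, S_g)_{2\times 2}$; on a simply connected domain this is solvable precisely because $\mathrm{curl}^T\mathrm{curl}$ applied to the right-hand side equals $\det\nabla^2 v + \mathrm{curl}^T\mathrm{curl}(S_g)_{2\times 2} = 0$ by hypothesis. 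One then absorbs the order-$\epsilon^3$ prescribed term $s_\epsilon$ together with all higher-order error terms (which involve $\nabla w_0$, $\nabla\tilde w_\epsilon$ and their products, hence are of size $O(\epsilon)$ relative to the $\epsilon^3$ scale once $\tilde w_\epsilon$ is controlled) into an equation for $\tilde w_\epsilon$ of the schematic form $\mathrm{sym}\nabla\tilde w_\epsilon = \mathcal{F}_\epsilon(\tilde w_\epsilon)$, where $\mathcal{F}_\epsilon$ is affine-plus-quadratic in $\nabla\tilde w_\epsilon$ with small coefficients.

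The key step is the solvability of the ``div-curl'' type system at each stage. Since $\mathrm{sym}\nabla w = A$ (with $A$ symmetric, $2\times 2$) is solvable in $\mathcal{C}^{2,\beta}$ iff $\mathrm{curl}^T\mathrm{curl}\, A = 0$, and the solution operator is bounded $\mathcal{C}^{1,\beta}\to\mathcal{C}^{2,\beta}$, I would reformulate the problem as finding $\tilde w_\epsilon$ with $\mathrm{curl}^T\mathrm{curl}(\mathcal{F}_\epsilon(\tilde w_\epsilon)) = 0$, i.e. a compatibility (Gauss-type) equation, coupled with the linear Poincaré-type reconstruction. Here the positivity of $-\mathrm{curl}^T\mathrm{curl}(S_g)_{2\times 2}$ enters: it guarantees that the relevant linearized operator — essentially a Monge–Ampère linearization, an elliptic second-order operator with ellipticity constant controlled by $c$ — is invertible with Schauder estimates uniform in $\epsilon$, so that the contraction mapping applies on a ball of radius $O(\epsilon^\delta)$ in $\mathcal{C}^{2,\beta}$ for small $\epsilon$. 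This yields $w_\epsilon = w_0 + \epsilon\tilde w_\epsilon$ with $\sup\|w_\epsilon\|_{\mathcal{C}^{2,\beta}} < \infty$ as claimed.

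I expect the main obstacle to be the uniform-in-$\epsilon$ Schauder estimate for the linearized compatibility operator, and the bookkeeping needed to verify that the residual after subtracting the $\epsilon^2$-order and the prescribed $\epsilon^3 s_\epsilon$ terms is genuinely of order $\epsilon^3$ times a bounded quantity (so that dividing by $\epsilon^3$ leaves something small). One must be careful that the quadratic term $\epsilon^4(\nabla w_\epsilon)^T\nabla w_\epsilon$ and the cross term $\epsilon^2 x_3$-type contributions do not spoil the scaling; this is where writing $w_\epsilon = w_0 + \epsilon\tilde w_\epsilon$ rather than a single unknown is essential, since $w_0$ is fixed and smooth while $\tilde w_\epsilon$ only needs a bound of order one. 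The whole argument parallels \cite[Theorem 7]{FJMhier}, the new ingredient being the non-degenerate right-hand side $S_g$, handled via the ellipticity furnished by the hypothesis $-\mathrm{curl}^T\mathrm{curl}(S_g)_{2\times 2}\geq c>0$.
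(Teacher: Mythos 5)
Your overall instinct is correct: the normal component of $w_\epsilon$ is the free parameter used to enforce the compatibility (vanishing Gauss curvature) condition, and the linearization of that condition is the Monge--Amp\`ere linearization $z\mapsto \mathrm{cof}\,\nabla^2 v:\nabla^2 z$, which is uniformly elliptic precisely because $\det\nabla^2 v\geq c>0$. This is also the mechanism in the paper. However, your specific decomposition $w_\epsilon = w_0 + \epsilon\tilde w_\epsilon$ contains a genuine structural error that makes the scheme fail.

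Here is the problem. With your ansatz, the out-of-plane part of $w_\epsilon$ is $O(\epsilon)$ (since $w_0$ is determined tangentially by the $\epsilon^2$-order equation and the normal component is carried by $\epsilon\tilde w_\epsilon$). Expanding $(\nabla u_\epsilon)^T\nabla u_\epsilon$ and collecting the $\epsilon^3$-terms, you must solve
\[
2\,\mathrm{sym}\,\nabla\tilde w_{\epsilon,tan} + 2\epsilon\,\mathrm{sym}\big(\nabla v\otimes\nabla\tilde w_\epsilon^3\big) + \cdots = s_\epsilon,
\]
and the solvability of the tangential piece requires the $\mathrm{curl}^T\mathrm{curl}$ of the right-hand side (with the $\tilde w^3_\epsilon$-contribution moved over) to vanish. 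Since $\mathrm{curl}^T\mathrm{curl}\big[\mathrm{sym}(\nabla v\otimes\nabla g)\big]$ is (up to sign) $\mathrm{cof}\,\nabla^2 v:\nabla^2 g$, that compatibility condition reads
\[
\epsilon\,\mathrm{cof}\,\nabla^2 v:\nabla^2\tilde w_\epsilon^3 \;=\; \tfrac12\,\mathrm{curl}^T\mathrm{curl}\,s_\epsilon \;+\; O(\epsilon),
\]
forcing $\tilde w_\epsilon^3 = O(1/\epsilon)$ whenever $\mathrm{curl}^T\mathrm{curl}\,s_\epsilon\not\to 0$ --- which the hypothesis ($\sup\|s_\epsilon\|_{\mathcal C^{1,\beta}}<\infty$, nothing more) does not prevent. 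Thus your contraction mapping on a ball of radius $O(\epsilon^\delta)$ cannot close, and the resulting $w_\epsilon$ is not uniformly bounded. The correct scaling has the normal part $w_\epsilon^3$ of size $O(1)$, not $O(\epsilon)$.

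The paper avoids this by not splitting $w_\epsilon$ into a leading term plus $\epsilon$-small correction; instead it decomposes $w_\epsilon = w_{\epsilon,tan} + w_\epsilon^3 e_3$ and sets $z_\epsilon = \epsilon w_\epsilon^3$, so the full out-of-plane displacement is $\epsilon(v+z_\epsilon)$. The target equation becomes: the $2$d metric
\[
g_\epsilon(z_\epsilon) = \mathrm{Id}_2 + 2\epsilon^2(\mathrm{sym}\,S_g)_{2\times 2} + \epsilon^3 s_\epsilon - \epsilon^2\nabla(v+z_\epsilon)\otimes\nabla(v+z_\epsilon)
\]
must be flat. Writing $\kappa(g_\epsilon(z))=0$ as $\Phi(\epsilon,z)=0$ and applying the implicit function theorem in $\mathcal C^{2,\beta}_0$ --- with $\partial_z\Phi(0,0)=-\mathrm{cof}\,\nabla^2v:\nabla^2(\cdot)$ invertible by ellipticity --- produces $z_\epsilon$ with $z_\epsilon = O(\epsilon)$, i.e. $w_\epsilon^3 = O(1)$. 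Then (and this is the second ingredient your proposal lacks) the tangential part is not obtained by a linearized Poincar\'e-type lemma but by invoking the existence and uniqueness of a $\mathcal C^{2,\beta}$ isometric immersion of the flat metric $g_\epsilon(z_\epsilon)$ into $\mathbb R^2$ (Mardare's theorem), together with Schauder estimates to check that this immersion has the form $\mathrm{id}_2+\epsilon^2 w_{\epsilon,tan}$ with $\|w_{\epsilon,tan}\|_{\mathcal C^{2,\beta}}$ uniformly bounded. The linear operator $\mathrm{sym}\,\nabla w = A$ only handles the linearized metric matching; the full nonlinear equation \eqref{metric} requires this exact isometric immersion step. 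To repair your proposal you would need to (a) replace the ansatz $w_\epsilon = w_0+\epsilon\tilde w_\epsilon$ by a normal/tangential split with the normal part of order one, and (b) upgrade the Poincar\'e reconstruction of the tangential part to a nonlinear isometric immersion argument with uniform estimates.
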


The applicability of Theorem \ref{matching} is limited by the
strong assumption of H\"older regularity in
$v\in\mathcal{C}^{2,\beta}(\bar\Omega,\mathbb{R})$. Clearly, it is too restrictive for constructing a
recovery sequence when $v\in W^{2,2}(\Omega)$. However, when the 
$\mathcal{C}^{2,\beta}(\bar\Omega,\mathbb{R})$ solutions $v$ of (\ref{MP-constraint})
are dense in the set of all $W^{2,2}(\Omega, \mathbb{R})$ solutions of
the same equation, with respect to the $W^{2,2}$ topology, then one
can use a diagonal argument. As shown in \cite{LMP-arma}, the mentioned density
property holds for star-shaped domains with a constant positive linearized curvature constraint, and consequently we obtain:

\begin{theorem}\label{limsup2}
Assume (\ref{ahform-new}), (\ref{frame}) and
(\ref{Q3}). Moreover, assume that  $\Omega$ is star-shaped with
respect to a ball, and that $f = -\mathrm{curl}^T\mathrm{curl}(S_g)_{2\times 2} \equiv c_0>0$ in $\Omega$.
Let:
\begin{equation*}
0<\gamma<2.
\end{equation*}
Then, for every $v\in \mathcal{A}_f$, there exists a sequence of deformations 
$u^h\in W^{1,2}(\Omega^{h},\mathbb{R}^3)$ such that (i), (ii) and
(iii) of Theorem \ref{limsup} hold.
Moreover, all the assertions of Theorem \ref{minsconverge} hold as well.
\end{theorem}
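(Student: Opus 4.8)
The plan is to upgrade Theorem \ref{limsup}, valid in the range $1<\gamma<2$, to the full range $0<\gamma<2$ by replacing the ansatz (\ref{rs2}) with the improved one furnished by Theorem \ref{matching}, and then to remove the auxiliary $\mathcal{C}^{2,\beta}$-regularity of $v$ through a density-and-diagonalization argument. First I would reduce to $v\in\mathcal{C}^{2,\beta}(\bar\Omega,\mathbb{R})$: since $\Omega$ is star-shaped with respect to a ball and $f\equiv c_0>0$, the density theorem of \cite{LMP-arma} yields, for any $v\in\mathcal{A}_f$, a sequence $v_k\in\mathcal{C}^{2,\beta}(\bar\Omega,\mathbb{R})$ with $\det\nabla^2 v_k=c_0$ and $v_k\to v$ in $W^{2,2}(\Omega)$; as $\mathcal{Q}_2$ is a continuous quadratic form and $(\mathrm{sym}~B_g)_{2\times 2}$ is fixed, this forces $\mathcal{I}_f(v_k)\to\mathcal{I}_f(v)$. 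Granting conclusions (i)--(iii) of Theorem \ref{limsup} for every $v_k$ with a \emph{quantitative} control of the energy defect $\big|h^{-(\gamma+2)}I_W^h(u^{h,k}) - \mathcal{I}_f(v_k)\big|$, a standard diagonal extraction $k=k(h)\to\infty$ sufficiently slowly as $h\to0$ then returns the sequence required for $v$; the convergences (i),(ii) pass to the diagonal because the deformations depend $W^{1,2}$-continuously on $v$.

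For fixed $v\in\mathcal{C}^{2,\beta}$ I would construct the recovery sequence as follows. Put $\epsilon:=h^{\gamma/2}$. Since $A^h(x',0)=\mathrm{Id}_3+h^\gamma S_g(x')$, the mid-plate block of the prestrain metric is $\big(G^h(\cdot,0)\big)_{2\times 2}=\mathrm{Id}_2+2h^\gamma(\mathrm{sym}~S_g)_{2\times 2}+h^{2\gamma}(S_g^TS_g)_{2\times 2}$; I therefore apply Theorem \ref{matching} with $s_\epsilon:=\epsilon\,(S_g^TS_g)_{2\times 2}$, a smooth field with $\sup\|s_\epsilon\|_{\mathcal{C}^{1,\beta}}<\infty$, the curvature hypothesis $-\mathrm{curl}^T\mathrm{curl}(S_g)_{2\times 2}=c_0>0$ being precisely our assumption. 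This produces $w_h:=w_\epsilon\in\mathcal{C}^{2,\beta}(\bar\Omega,\mathbb{R}^3)$ with $\sup_h\|w_h\|_{\mathcal{C}^{2,\beta}}<\infty$, so that the mid-plate map $u_h:=\mathrm{id}_2+h^{\gamma/2}v e_3+h^\gamma w_h$ induces \emph{exactly} the block $\big(G^h(\cdot,0)\big)_{2\times 2}$ (indeed $\epsilon^3 s_\epsilon=h^{2\gamma}(S_g^TS_g)_{2\times 2}$). I then take the corrected Kirchhoff--Love extension $u^h(x',x_3)=u_h(x')+x_3 b^h(x')+\tfrac{x_3^2}{2}d^h(x')$, with $b^h$ the unit normal to $u_h(\Omega)$ modified by the third-column contributions of $h^\gamma S_g+h^{\gamma/2}x_3 B_g$, and $d^h$ chosen exactly as in the proof of Theorem \ref{limsup} (to cancel the $x_3^2$-order metric discrepancy and to realize the optimal $3\times 3$ completion in (\ref{defQ})); the uniform $\mathcal{C}^{2,\beta}$ bound on $w_h$ makes $u_h$, $b^h$, $d^h$ smooth with $h$-uniform derivative bounds.

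The energy is then estimated by expanding $W$ about $\mathrm{Id}_3$ via (\ref{Q3}) along $F^h=\nabla u^h(A^h)^{-1}$. Because the mid-plate metric is matched exactly, the symmetric part of $F^h-\mathrm{Id}_3$ carries, to leading order, only the $x_3$-linear bending term proportional to $\nabla^2 v+(\mathrm{sym}~B_g)_{2\times 2}$ in its $2\times 2$ block; the remaining contributions are governed by $b^h$, $d^h$ and by genuinely higher-order remainders, and every one of them is $o(h^{\gamma+2})$ for each $0<\gamma<2$ --- the decisive gain over (\ref{rs2}) being that the in-plane stretching defect of order $h^{3\gamma/2}$, which is what restricted Theorem \ref{limsup} to $\gamma>1$, has been annihilated. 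Integrating over the thickness, with $d^h$ realizing the optimal completion, yields $\frac1h\int_{\Omega^h}W(F^h)\,\mathrm{d}x=\frac{1}{12}h^{\gamma+2}\int_\Omega\mathcal{Q}_2\big(\nabla^2 v+(\mathrm{sym}~B_g)_{2\times 2}\big)+o(h^{\gamma+2})=h^{\gamma+2}\mathcal{I}_f(v)+o(h^{\gamma+2})$, which, together with the lower bound (\ref{MA-bd}) of Theorem \ref{compactness}, gives conclusion (iii); conclusions (i) and (ii) are immediate from the form of $u^h$, since the $h^{\gamma/2}$- and higher-order corrections vanish in $W^{1,2}$. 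For the remaining assertions --- those of Theorem \ref{minsconverge} --- the compactness statement and the lower bound (\ref{MA-bd}) of Theorem \ref{compactness} already hold for every $0<\gamma<2$, so, combined with the recovery sequence just built, they yield $h^{-(\gamma+2)}\inf I_W^h\to\min_{W^{2,2}}\mathcal{I}_f$, the equivalence $\mathcal{A}_f\neq\emptyset\iff e_h\le Ch^{\gamma+2}$, the convergence of any sequence satisfying (\ref{approxmin}) to a minimizer of $\mathcal{I}_f$, and the realizability of every global minimizer; the dichotomy in part (ii) of Theorem \ref{minsconverge} uses only Theorem \ref{compactness} and Lemma \ref{GCM} ($\inf\mathcal{I}_f>0$ when (\ref{lincondition}) holds), and so transfers verbatim.

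The hard part is not a new idea --- the improved matching that drives the in-plane error down to order $h^{2\gamma}$ is already isolated in Theorem \ref{matching} --- but rather the bookkeeping: I must verify that the uniform $\mathcal{C}^{2,\beta}$ bounds on $w_\epsilon$ coming from Theorem \ref{matching} actually force \emph{every} error term in the energy expansion to be $o(h^{\gamma+2})$ with constants independent of $h$, and, for the density reduction, calibrate the diagonal rate $k=k(h)$ against both $\mathcal{I}_f(v_k)\to\mathcal{I}_f(v)$ and the $h$-decay of the $k$-th family's energy defect (whose rate deteriorates as $k\to\infty$, since $W^{2,2}$-density does not control $\|v_k\|_{\mathcal{C}^{2,\beta}}$), so that the genuine limit $v$, rather than some intermediate $v_k$, is recovered. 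Neither is conceptually serious, but the second calibration must be written out with care.
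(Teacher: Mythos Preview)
Your proposal is correct and follows essentially the same route as the paper: reduce to $v\in\mathcal{C}^{2,\beta}$ via the density result of \cite{LMP-arma} (Proposition~\ref{thm-density}) and a diagonal argument, then apply Theorem~\ref{matching} with $\epsilon=h^{\gamma/2}$ and $s_\epsilon=\epsilon\,(S_g^TS_g)_{2\times 2}$ to obtain an exact isometric mid-plate map, and finally build the recovery sequence as a corrected Kirchhoff--Love extension with a Cosserat vector $b^h$ determined by $G^h(\cdot,0)$ and a quadratic warping correction. The paper's sketch (Section~4, part~4) records exactly this construction, with the same choice of $s_\epsilon$ (written there as $(S_g^2)_{2\times 2}$) and the explicit Cosserat/warping ansatz (\ref{expan3})--(\ref{warp}); your identification of the decisive gain---the annihilation of the $h^{3\gamma/2}$ in-plane stretching defect---is precisely what extends the range from $1<\gamma<2$ to $0<\gamma<2$.
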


\smallskip

\subsection{On the multiplicity of solutions to the limit model} 

Our final set of results concerns the question of uniqueness of the
minimizers to the model (\ref{linpresKirchhoff}). We first observe 
that both uniqueness and existence of a one-parameter family of global
minimizers are possible (see Example \ref{ex1} and Example \ref{ex2}).
Naturally, for the radial function $f=f(r)\ge 0$, uniqueness is tied to the radial symmetry of
minimizers. One approach is to study the relaxed problem, and replace the constraint
set $\mathcal{A}_f$ by $\mathcal{A}_{f}^* = \{v\in
W^{2,2}(\Omega); ~ \det\nabla^2v\geq f\}$. 

In particular, as a corollary to Theorem
\ref{decrease} and Corollary \ref{condi} we obtain the following result:

\begin{theorem}
Assume that  $f \in L^2(B(0,1))$ is radially symmetric i.e.: $f=f(r)$
and $\int_0^1
r f ^2(r)~\mathrm{d}r<\infty$. Assume further that $f\geq c >0$, and that $ f $ is a.e. nonincreasing, i.e.:
\begin{equation}
\forall a.e.~ r\in [0,1] \quad\forall a.e.~ x\in [0,r]\qquad
 f (r)\leq  f (x).
\end{equation}
Then the functional $~\mathcal{I}(v) = \int_{B(0,1)} |\nabla^2v|^2$,
restricted to the constraint set $\mathcal{A}_f$, has a unique (up to an
affine map) minimizer, which is radially symmetric and given by $v_ f $ in (\ref{minimizer}).
\end{theorem}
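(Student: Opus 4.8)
The plan is to deduce the statement from the analysis of the \emph{relaxed} problem of minimizing $\mathcal{I}$ over $\mathcal{A}_f^* = \{v\in W^{2,2}(B(0,1));~\det\nabla^2 v\geq f\}$, for which convexity forces essential uniqueness, and then to identify that minimizer with the radial profile $v_f$ of (\ref{minimizer}). First I would record that $\mathcal{I}(v)=\int_{B(0,1)}|\nabla^2 v|^2$ is a convex quadratic form whose kernel is exactly the space of affine maps, and that since $f\geq c>0$ every $v\in\mathcal{A}_f^*$ has a pointwise definite Hessian, so --- replacing $v$ by $-v$ if necessary, which alters neither $\mathcal{I}$ nor $\det\nabla^2 v$ --- one may assume $v$ convex on the connected domain $B(0,1)$. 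On the cone of convex functions the constraint $\det\nabla^2 v\geq f$ is convex, because $M\mapsto(\det M)^{1/2}$ is concave on positive semidefinite symmetric matrices while $v\mapsto\nabla^2 v$ is linear; combined with the weak continuity of the null Lagrangian $v\mapsto\det\nabla^2 v$ under weak $W^{2,2}$ convergence, the direct method produces a minimizer of $\mathcal{I}$ over $\mathcal{A}_f^*$, and strict convexity of $M\mapsto|M|^2$ makes it unique up to an affine map within the class of convex solutions.

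Next I would exploit the radial structure. For a radial $v=v(r)$ one has $\det\nabla^2 v=v'(r)v''(r)/r$ and $|\nabla^2 v|^2=v''(r)^2+(v'(r)/r)^2$, so on setting $\phi=v'$ --- with $\phi(0)=0$ forced by $W^{2,2}$ regularity at the origin --- the relaxed constraint reads $(\phi^2)'\geq 2rf$, i.e. $\phi(r)^2\geq g(r)^2:=2\int_0^r sf(s)\,\mathrm{d}s$, and $v_f$ in (\ref{minimizer}) is precisely the pointwise-minimal choice $\phi=g$, for which $\det\nabla^2 v_f=f$ a.e., so that $v_f\in\mathcal{A}_f\subseteq\mathcal{A}_f^*$. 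The one-dimensional functional $E(\phi)=\int_0^1\big(r\phi'(r)^2+\phi(r)^2/r\big)\,\mathrm{d}r$ representing $\mathcal{I}$ on radial maps is convex, and $\{\phi\geq g\}$ is convex, so $\phi=g$ is its global minimizer as soon as it satisfies the associated variational inequality, which after integration by parts amounts to the sign condition $-(rg')'+g/r\geq 0$. Theorem \ref{decrease} supplies the reduction of the relaxed problem to radial competitors (circular symmetrization preserves admissibility and does not increase $\mathcal{I}$ when $f$ is radial), and Corollary \ref{condi} is the assertion that under the above sign condition $v_f$ is the essentially unique minimizer of $\mathcal{I}$ over $\mathcal{A}_f^*$. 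Since $f$ a.e. nonincreasing makes that condition hold, assembling the pieces gives that $v_f$ minimizes $\mathcal{I}$ over $\mathcal{A}_f^*$, hence a fortiori over $\mathcal{A}_f\subseteq\mathcal{A}_f^*$, uniquely up to an affine map by the first paragraph; and $v_f$ is radial by construction.

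The main obstacle is the interplay between these two inputs, namely checking that the constraint is active \emph{throughout} at the minimizer. The sign condition $-(rg')'+g/r\geq 0$ can be rewritten, after multiplication by $rg>0$, as $\big(r^2f^2/g^2+g^2-2r^2f\big)-r^3f'\geq 0$; the parenthesized term is nonnegative for every $f\geq 0$ by the arithmetic--geometric mean inequality (and vanishes when $f$ is constant), while the remaining term $-r^3f'$ is nonnegative precisely because $f$ is nonincreasing --- but making this rigorous for a merely $L^2$, a.e. nonincreasing $f$, for which $g$ need not be twice differentiable, is the delicate point, on a par with the symmetrization inequality for $\mathcal{I}$ underpinning Theorem \ref{decrease}. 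The remaining steps --- the passage from $\det\nabla^2 v\geq c>0$ to global convexity up to sign for $W^{2,2}$ maps, the weak closedness of $\mathcal{A}_f^*$, and the finiteness $\mathcal{I}(v_f)<\infty$, which follows from the hypothesis $\int_0^1 rf^2\,\mathrm{d}r<\infty$ via $\int_0^1 r^3f^2/g^2\,\mathrm{d}r\leq c^{-1}\int_0^1 rf^2\,\mathrm{d}r$ --- I expect to be routine.
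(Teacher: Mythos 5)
Your overall strategy---pass to the relaxed problem $\mathcal{A}_f^*$, get essential uniqueness there from Brunn--Minkowski plus strict convexity of $M\mapsto |M|^2$, then identify the minimizer with $v_f$ and pull the conclusion back to $\mathcal{A}_f\subset\mathcal{A}_f^*$---is exactly the paper's skeleton (Lemmas \ref{exist}, \ref{unique}, \ref{radial2}, and Theorem \ref{decrease}). But at the two decisive steps you take genuinely different routes. For radial symmetry of the relaxed minimizer, you propose circular symmetrization (averaging over $v\circ R_\theta$ preserves $\mathcal{A}_f^*$ by Brunn--Minkowski and does not increase $\mathcal{I}$ by Jensen); the paper instead notes that $v\circ R_\theta$ is a minimizer whenever $v$ is, so uniqueness from Lemma \ref{unique} forces $v=v\circ R_\theta$. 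Both work, but the paper's version is softer and dispenses with any symmetrization inequality. For the identification of the minimizer with $v_f$, you reduce to the $1$D obstacle problem in $\phi=v'$ over $\{\phi\ge g\}$, $g^2(r)=2\int_0^r sf$, and check the variational inequality $-(rg')'+g/r\ge 0$; after multiplying by $rg$ and simplifying (note: the term should read $r^4 f^2/g^2+g^2-2r^2 f$, not $r^2f^2/g^2+\dots$) this is AM--GM plus $-r^3 f'\ge 0$. The paper does something avoidably different: it never forms the Euler--Lagrange/obstacle inequality, but directly compares $\mathcal{I}(v_f)$ with $\mathcal{I}(v_{\psi[f]})$, where $\psi[f]=\det\nabla^2(\operatorname{argmin}\mathcal{I}_f^*)\ge f$, and shows the difference is $\ge 0$ using only $\psi\ge f$ and the pointwise inequality $\int_0^r 2sf\,\mathrm{d}s\ge r^2 f(r)$ (which is exactly where a.e.\ monotonicity of $f$ enters).

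The payoff of the paper's version is precisely the issue you flag as delicate: for $f\in L^2$ merely a.e.\ nonincreasing, $g$ need not have a second derivative, so the distributional inequality $-(rg')'+g/r\ge 0$ is not directly available, and the chain rule rewriting you perform (which smuggles in $f'$) is not licensed. The paper's direct energy comparison requires no differentiation of $f$ or $g$ beyond $g'=rf/g$ a.e.\ and the integral form of monotonicity, so it closes at exactly the regularity in the hypotheses. Your argument is correct at heart but, as written, stops short of a proof for the stated class of $f$; to close it along your lines you would need to recast the variational inequality in an integrated form that never differentiates $r^2 f/g$. (Also, Theorem \ref{decrease} and Corollary \ref{condi} in the paper are not quite the statements you ascribe to them---Theorem \ref{decrease} is essentially the whole conclusion, proved by the direct comparison, and Corollary \ref{condi} only concerns existence of the radial $W^{2,2}$ solution---but since you supply the ingredients yourself this misattribution is cosmetic.)
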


It is unclear to the authors whether the above theorem holds
for every positive $f$. However, we can establish that the radial
solution to the constraint equation is always a critical point.  More
precisely, we have the following result:

\begin{theorem}\label{criptmA}
Assume that $f\in\mathcal{C}^\infty(\bar B(0,1))$ is radially symmetric
i.e. $f=f(r)$, and that $f\geq c>0$.  Then the radially symmetric  $v=v(r)\in \mathcal{A}_f$
must be a critical point of the functional $~\mathcal{I}(v) = \int_{B(0,1)} |\nabla^2v|^2$,
restricted to the constraint set $\mathcal{A}_f$.
\end{theorem}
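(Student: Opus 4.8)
The plan is to show that $v$ satisfies the Euler--Lagrange relation for $\mathcal I$ constrained to $\det\nabla^2 v=f$ in Lagrange--multiplier form. Write $B:=B(0,1)$. I will construct a radially symmetric $\lambda\in\mathcal C^\infty(\bar B)$ with
\begin{equation*}
\int_B\nabla^2 v:\nabla^2\phi\,\mbox{d}x=\int_B\lambda\,\big(\mathrm{cof}(\nabla^2 v):\nabla^2\phi\big)\,\mbox{d}x\qquad\mbox{for all }\phi\in W^{2,2}(B).
\end{equation*}
This is enough: the tangent space of $\mathcal A_f$ at $v$ is contained in $T:=\{\phi\in W^{2,2}(B):\ \mathrm{cof}(\nabla^2 v):\nabla^2\phi=0\ \mbox{a.e.}\}$ (differentiating $\det\nabla^2 v_t=f$ along any $\mathcal C^1$ curve $t\mapsto v_t\in\mathcal A_f$ with $v_0=v$ gives $\mathrm{cof}(\nabla^2 v):\nabla^2\dot v_0=0$), and for $\phi\in T$ the right--hand side above vanishes, so $\tfrac12 D\mathcal I(v)\phi=\int_B\nabla^2 v:\nabla^2\phi=0$. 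In fact the displayed identity will hold for all $\phi\in W^{2,2}(B)$, i.e.\ $v$ solves the full natural--boundary--condition system, so the absence of boundary conditions in $\mathcal A_f$ is immaterial.

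I first record the radial structure. For $v=v(r)$ one has $\nabla^2 v=v''\,e_r\otimes e_r+\tfrac{v'}{r}\,e_\theta\otimes e_\theta$ and $\mathrm{cof}(\nabla^2 v)=\tfrac{v'}{r}\,e_r\otimes e_r+v''\,e_\theta\otimes e_\theta$, so the constraint $\det\nabla^2 v=f$ reads $v'(r)v''(r)=rf(r)$, i.e.\ $v'(r)^2=2\int_0^r sf(s)\,\mbox{d}s$. Since $f\ge c>0$ is smooth, $v'(r)\ne0$ for $r\in(0,1]$, $|v'(r)|\sim\sqrt{f(0)}\,r$ as $r\to0^+$, and $v$ is a smooth radial function on $\bar B$ (it agrees, up to sign and an additive constant, with the explicit profile $v_f$); all integrations by parts below are therefore classical.

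To build $\lambda$, I seek it radial. Since $\mathrm{div}\,\mathrm{cof}(\nabla^2 v)=0$ (commuting second derivatives), integrating the target identity by parts twice reduces it to the interior equation $\Delta^2 v=\mathrm{cof}(\nabla^2 v):\nabla^2\lambda$ together with two boundary conditions on $\partial B=\{r=1\}$, matching separately the coefficients of $\partial_n\phi$ and of $\phi$ (these two boundary traces being freely prescribable). For radial functions $\mathrm{cof}(\nabla^2 v):\nabla^2\lambda=\tfrac1r(v'\lambda')'$ and $\Delta^2 v=\tfrac1r\big(r(\Delta v)'\big)'$, so the interior equation is $(v'\lambda')'=\big(r(\Delta v)'\big)'$, hence $v'\lambda'=r(\Delta v)'+c$; regularity of $\lambda$ at the origin forces $c=0$ (else $\lambda'(r)\sim c/(\sqrt{f(0)}\,r)$ is not locally integrable), giving
\begin{equation*}
\lambda'(r)=\frac{r\,(\Delta v)'(r)}{v'(r)},
\end{equation*}
which extends smoothly across $r=0$ (numerator vanishing to second order, denominator to first order). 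I then fix the integration constant by $\lambda(1):=v''(1)/v'(1)$, so that $\lambda\in\mathcal C^\infty(\bar B)$ is radial.

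Finally I verify the identity. Two integrations by parts give $\int_B\nabla^2 v:\nabla^2\phi=\int_B\Delta^2 v\,\phi+\int_{\partial B}\big(v''(1)\,\partial_n\phi-(\Delta v)'(1)\,\phi\big)$ --- the traction $\nabla^2 v\cdot n=v''(1)n$ on $\partial B$ being purely normal, so no tangential derivative of $\phi$ appears --- and, using $\mathrm{div}\,\mathrm{cof}(\nabla^2 v)=0$, $\int_B\lambda\,\mathrm{cof}(\nabla^2 v):\nabla^2\phi=\int_B\big(\mathrm{cof}(\nabla^2 v):\nabla^2\lambda\big)\phi+\int_{\partial B}\big(\lambda(1)v'(1)\,\partial_n\phi-v'(1)\lambda'(1)\,\phi\big)$. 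The interior integrands agree by the construction of $\lambda'$; the $\partial_n\phi$ coefficients agree by the choice of $\lambda(1)$; and the $\phi$ coefficients agree because $v'(1)\lambda'(1)=(\Delta v)'(1)$ immediately from the formula for $\lambda'$, proving the identity for every $\phi\in W^{2,2}(B)$. The step I expect to require the most care is arranging the integrations by parts so that all boundary terms cancel: the $\partial_n\phi$ coefficient is matched by a free choice of integration constant, but the cancellation of the $\phi$ coefficient is not a matter of choice --- it works precisely because the interior equation and the natural (``$\phi$'') boundary condition are both governed by the single first--order relation $v'\lambda'=r(\Delta v)'$, which is where the radial symmetry of $v$ enters essentially, beyond merely reducing $\mathcal A_f$ to an ODE. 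The regularity bookkeeping near $r=0$ --- that $\lambda$ is genuinely smooth, so the integrations by parts leave no interior remainder --- is routine given $|v'(r)|\sim\sqrt{f(0)}\,r$ and $(\Delta v)'(0)=0$.
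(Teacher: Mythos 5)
Your proof is correct and follows essentially the same route as the paper: introduce a radial Lagrange multiplier $\lambda$, reduce the interior Euler--Lagrange equation to the ODE $(v'\lambda')'=(r(\Delta v)')'$ via the Piola identity and the radial form of $\Delta^2$ and $\mathrm{cof}\nabla^2 v:\nabla^2\lambda$, fix the integration constants for regularity at $r=0$ and the $\partial_n\phi$ boundary match, and observe that the remaining $\phi$-boundary term cancels automatically from the same first-order relation (your $\lambda$ is $-\tfrac12$ times the paper's, a harmless normalization). The only presentational difference is that you integrate by parts directly in the weak formulation rather than first writing out the general natural boundary conditions with $\partial_\tau$ terms and then checking they vanish; this is a cosmetic streamlining of the same calculation.
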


\smallskip

\subsection{Notation} Throughout the paper, we use the following notational
convention. For a matrix $F$, its $n\times m$ principal
minor is denoted by $F_{n\times m}$. When $m=n$ then the symmetric
part of a square matrix $F$ is: $\mbox{sym } F 
= 1/2(F + F^T)$. The superscript $^T$ refers to the  
transpose of a matrix or an operator. The operator $\mbox{curl}^T\mbox{curl}$ 
acts on $2\times 2$ square matrix fields $F$
by taking first $\mbox{curl}$ of each row (returning $2$ scalars)
and then taking $\mbox{curl}$ of the resulting $2$d vector, so that:
$\mbox{curl}^T\mbox{curl} F = \partial_{11}^2 F_{22} - \partial_{12}^2(F_{12}+F_{21})
+ \partial_{22}^2 F_{11}.$
In particular, we see that: $\mbox{curl}^T\mbox{curl } F =
\mbox{curl}^T\mbox{curl} (\mbox{sym }F)$.    

Further, for any $F \in{\mathbb R}^{2\times2}$,  by $F^*\in{\mathbb R}^{3\times 3}$ 
we denote the matrix for which $ (F^*)_{2\times2} = F$ 
and $(F^*)_{i3}= (F^*)_{3i} =0$, $i=1, \ldots, 3$.   
By $\nabla_{tan}$ we denote taking derivatives $\partial_1$ and $\partial_2$
in the in-plate directions $e_1=(1,0,0)^T$ and $e_2=(0,1,0)^T$. 
The derivative $\partial_3$ is taken in the out-of-plate direction $e_3=(0,0,1)^T$.

Finally, we will use the Landau symbols $\mathcal{O}(h^\alpha)$ and
$o(h^\alpha)$ to denote quantities which are of the order of, or vanish
faster than $h^\alpha$, as $h\to 0$.
By $C$ we denote any universal constant, depending on $\Omega$ and $W$,
but independent of other involved quantities, so that
$C=\mathcal{O}(1)$.

\medskip

\subsection{Discussion and relation to previous works} 

We now comment on the ``critical exponents'' of $\gamma$, i.e. the
boundary values of ranges in which our analysis is valid. To 
draw a parallel with the previous results, in particular the seminal
paper \cite{FJMhier}
and the conjecture in \cite{LePa2} for the hierarchy of 
models for nonlinear elastic shells, we note the following heuristics.

Given an exponent $\gamma>0$, we expect (in view of Theorem
\ref{compactness} and its proof) the residual energy to
scale as $h^{\gamma+2}$, under suitable non-vanishing curvature
conditions on the prestrain metric.  
Following \cite{LePa2}, where the critical
exponents for the energy were shown to be: $\{\beta_n= 2+ \frac 2n\}_{n\in \mathbb N}$, we let
$\gamma_n= \beta_n -2 = \frac 2n$, with $\gamma_0 = \infty$ and
$\gamma_\infty = 0$. We say that  $(V_1, \ldots, V_n) : \Omega \to
(\R^3)^n$ is an $n$th order isometry of the prestrained plate when
the sequence of metrics induced by the 
one-parameter family of infinitesimal bendings $u_h= {\rm id}_2 +
\sum_{k=1}^n h^{k\gamma/2} V_k$ differ from the prescribed  metrics
$G^h$ by terms of order at most $\mathcal O(h^{(n+1)\gamma/2})$.  
If $n=1$, any normal out-of-plane displacement $V_1 = (0,0,v)^T$ is a 1st
order isometry, while for $n=\infty$, the resulting bending $u_h$ is
formally an exact isometry.   

\smallskip

In this framework, several regimes can  be distinguished: 

\begin{itemize}

\item[(i)] When $  \gamma_n <\gamma < \gamma_{n-1}$, we  expect  the
  limiting energy to be a
  linearized bending model with the $n$th-order isometry constraint.

\item[(ii)] At the critical values $\gamma = \gamma_n$, the isometry
  constraint of the limit model should be of order $n-1$, but in
  addition to  the bending energy term, the limiting energy will also contain 
 the $n$th order stretching term.

\item[(iii)] Whenever the structure of the
  pre-strain tensor $S_g$ allows for it, any $n$th order isometry
  can be matched to a higher order isometry of some order $m>n$. In that
  case,  the theories in the range $ \gamma_m <\gamma < \gamma_n$
are expected to collapse to the same theory (with the $n$th order isometry
constraint).  
\end{itemize} 
 
The results in this paper can be now interpreted as follows. 
In Theorems \ref{limsup} and \ref{minsconverge}  
we derived the correct model, with the second order isometry
 constraint \eqref{MP-constraint}, corresponding to the values of
 $\gamma$ between $\gamma_2=1$ and $\gamma_1=2$. The constraint
 \eqref{MP-constraint} is naturally derived for the full range  
 $0<\gamma<\gamma_1$ (Theorem \ref{compactness}), but this
 information is not enough for characterizing the limiting model 
 when $\gamma \le \gamma_2=1$. Theorem \ref{matching} and the  
 corresponding density result provides the tools to let all the
 expected higher order constraints for the full range $0=\gamma_\infty < \gamma
 \le \gamma_2=1$ be derived from the second order constraint
 \eqref{MP-constraint}.  
This leads to Theorem \ref{limsup2}.  For other instances where such
matching properties have been proved and exploited to a similar
purpose see \cite{lemopa1, FJMhier, holepa, LMP-arma}. The
continuation of infinitesimal bendings  
has also been used in \cite{ho2, ho3} to derive the
Euler-Lagrange equations of elastic shell models.

In the absence of better techniques to show a direct $n$th order to
exact isometry continuation (when the assumptions of Theorem
\ref{matching} do not hold), one could hope to improve the results of
Theorem \ref{minsconverge}, say   
 to the range $ 2/3= \gamma_3<\gamma \le \gamma_2 =1$, provided that a
 matching of $2$nd order isometries to $3$rd order isometries
 is at hand. Solving this problem involves analyzing a
 linear system of PDEs,  
 rather than the full nonlinear isometry equation as in Theorem
 \ref{matching}. In general, this strategy, which was adapted in
 \cite{holepa}  for developable surfaces (see also \cite{ho2}) 
 leads to matching of $n$th order isometries to $(n+1)$th order
 isometries, and hence it could potentially imply that Theorem
 \ref{minsconverge} is indeed true for the full range $0<\gamma<2$. This
 is, however, still a technically difficult  
 problem and beyond our current understanding. 

The two extreme critical cases are: $\gamma_1=2$ which leads to the prestrained von
 K\'arm\'an model, whose rigorous derivation was given in
 \cite  {LMP-prs1},  and $\gamma_\infty =0$ which corresponds to the prestrained
 Kirchhoff model, that has been considered in \cite{a4, LePa1}.
The Monge-Amp\`ere constrained model studied in this paper 
 lies in between the Kirchhoff and von K\'arm\'an models and can be
 compared to either of them. It can also be seen as a natural generalization, to
 the prestrained case, of a similar model derived in \cite{FJMhier}
 which involves the degenerate  
 constraint $\det\nabla^2v=0$.  Finally, the regime $\gamma> \gamma_1$
 will lead to a simple linear bending model.

\bigskip

\noindent {\bf Acknowledgments.}
This project is based on the work supported by the National Science
Foundation. M.L. is partially supported by the NSF Career grant
DMS-0846996. M.R.P. is partially supported by 
the NSF grant  DMS-1210258.

\section{Compactness and lower bound: A proof of
  Theorem \ref{compactness}} 

{\bf 1.} Recall that in \cite{LMP-prs1} we dealt with the general growth tensor
family $A_h$. The following quantities, which we compute for the
present case scenario (\ref{ahform-new}) play the role in
the scaling analysis below:
\begin{equation}\label{strange}
\begin{split}
& \|\nabla_{tan} (A^h_{~|x_3=0})\|_{L^\infty(\Omega)} 
+ \|\partial_3 A^h\|_{L^\infty(\Omega^h)}\leq Ch^{\gamma/2} \\
& \|A^h\|_{L^\infty(\Omega^h)} + \|(A^h)^{-1}\|_{L^\infty(\Omega^h)} \leq C.
\end{split}
\end{equation}
We now quote the following approximation result, which can be directly
obtained from the geometric rigidity estimate \cite{FJMgeo}, in view
of the bounds (\ref{strange}): 

\begin{theorem} \label{thapprox} \cite[Theorem 1.6]{LMP-prs1}
Let $u^h\in W^{1,2}(\Omega^h,\mathbb{R}^3)$ satisfy $\ds \lim_{h\to 0}
\frac{1}{h^2} I_W^h(u^h) = 0$ (which is in particular implied by (\ref{boundinh}).
Then there exist matrix fields $R^h\in W^{1,2}(\Omega,\mathbb{R}^{3\times 3})$,
such that $R^h(x')\in SO(3)$ for a.e. $x'\in\Omega$, and:
\begin{equation}\label{m1}
\frac{1}{h}\int_{\Omega^h}|\nabla u^h(x) - R^h(x')A^h(x)|^2~\mathrm{d}x
\leq C h^{2+\gamma}, \qquad 
\int_\Omega |\nabla R^h|^2 \leq C h^{\gamma}. 
\end{equation}
\end{theorem}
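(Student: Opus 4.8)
The plan is to adapt the partition-into-cubes proof of the geometric rigidity estimate \cite{FJMgeo,FJMhier} to the present prestrained situation, exploiting throughout that, by (\ref{strange}), $A^h$ is smooth, uniformly invertible, $\mathcal{O}(h^{\gamma/2})$-close to $\mathrm{Id}_3$ and has $\mathcal{O}(h^{\gamma/2})$ gradient. I would argue under the quantitative bound (\ref{boundinh}), which is the only case needed below; the weaker hypothesis $\lim_{h\to0}h^{-2}I^h_W(u^h)=0$ gives, by the same argument, the natural weaker estimates. First, by (\ref{frame}) and $F=\nabla u^h(A^h)^{-1}$,
\begin{equation*}
\int_{\Omega^h}\mathrm{dist}^2(F,SO(3))\,\mathrm{d}x\;\le\;\frac1c\int_{\Omega^h}W(F)\,\mathrm{d}x\;=\;\frac{h}{c}\,I^h_W(u^h)\;\le\;Ch^{3+\gamma}.
\end{equation*}
The essential point for what follows is that $\nabla u^h$ must be compared with the rotated targets $SO(3)A^h$ rather than with $SO(3)$ directly: the former brings in only the $\mathcal{O}(h^{1+\gamma/2})$ oscillation of $A^h$ over an $h$-cube, whereas the latter would involve the full $\mathcal{O}(h^{\gamma/2})$ size of $A^h-\mathrm{Id}_3$ and cost a power of $h$.

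Next I would cover $\Omega$ by essentially disjoint squares $\omega$ of side comparable to $h$ (with the customary finite-overlap modification near $\partial\Omega$), giving thin cubes $Q=\omega\times(-h/2,h/2)$ with centers $x_Q$, on which $\|A^h A^h(x_Q)^{-1}-\mathrm{Id}_3\|_{L^\infty(Q)}\le Ch^{1+\gamma/2}$. Writing $\varepsilon_Q:=\int_Q\mathrm{dist}^2(F,SO(3))$, so that $\sum_Q\varepsilon_Q\le Ch^{3+\gamma}$, and using $\nabla u^h A^h(x_Q)^{-1}=F\,(A^h A^h(x_Q)^{-1})$ on $Q$, one finds $\int_Q\mathrm{dist}^2(\nabla u^h A^h(x_Q)^{-1},SO(3))\le C\varepsilon_Q+Ch^{5+\gamma}$. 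Applying the geometric rigidity estimate of \cite{FJMgeo} --- scale invariant, with constant stable under bi-Lipschitz perturbations of the reference domain --- after the affine change of variables $\xi\mapsto A^h(x_Q)^{-1}\xi$, which turns the constant target $SO(3)A^h(x_Q)$ into $SO(3)$ and $Q$ into a parallelepiped uniformly close to an $h$-cube, one obtains, with a constant uniform in $h$ and $Q$, rotations $\bar R_Q\in SO(3)$ satisfying
\begin{equation*}
\int_Q\big|\nabla u^h-\bar R_Q A^h(x_Q)\big|^2\;\le\;C\varepsilon_Q+Ch^{5+\gamma}.
\end{equation*}

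It then remains to patch these local rotations together. Set $\bar R^h(x')=\bar R_Q$ on $\omega$. For adjacent cubes $Q,Q'$, comparing the two local approximations on their overlap (of $3$d measure $\sim h^3$) and using $\|A^h(x_Q)^{-1}\|\le C$ together with $|A^h(x_Q)-A^h(x_{Q'})|\le Ch^{1+\gamma/2}$ yields $|\bar R_Q-\bar R_{Q'}|^2\le Ch^{-3}(\varepsilon_Q+\varepsilon_{Q'}+h^{5+\gamma})$; summing over the $\sim h^{-2}$ cubes and all adjacent pairs gives $\sum_{Q\sim Q'}|\bar R_Q-\bar R_{Q'}|^2\le Ch^{\gamma}$. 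I would then replace the piecewise constant $\bar R^h$ by a field $R^h\in W^{1,2}(\Omega;SO(3))$ through the standard regularization --- mollification at scale $h$ followed by nearest-point projection onto $SO(3)$, the mollified field remaining in a fixed tubular neighborhood of $SO(3)$ off an exceptional set whose smallness is quantified by a covering argument, as in \cite{FJMgeo,FJMhier} --- so that $\int_\Omega|\nabla R^h|^2\le C\sum_{Q\sim Q'}|\bar R_Q-\bar R_{Q'}|^2\le Ch^{\gamma}$ and $\int_\Omega|R^h-\bar R^h|^2\le Ch^{2+\gamma}$. Finally, on each $Q$ the decomposition $\nabla u^h-R^hA^h=(\nabla u^h-\bar R_QA^h(x_Q))+\bar R_Q(A^h(x_Q)-A^h)+(\bar R_Q-R^h)A^h$, after squaring, integrating over $\Omega^h$ and summing, bounds $\int_{\Omega^h}|\nabla u^h-R^hA^h|^2$ by $C(h^{3+\gamma}+h^{-2}\cdot h^{5+\gamma}+h\cdot h^{2+\gamma})\le Ch^{3+\gamma}$; dividing by $h$ gives the first inequality of (\ref{m1}), the second being the gradient bound just obtained. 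This is, in essence, the argument of \cite{LMP-prs1}.

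The hard part will be the patching step: manufacturing from the family $\{\bar R_Q\}$ of locally optimal \emph{constant} rotations a single globally defined $W^{1,2}(\Omega;SO(3))$ field with the sharp gradient bound of order $h^{\gamma}$ --- in particular the pointwise projection back onto the curved manifold $SO(3)$, controlling the small set on which the mollified field leaves a neighborhood of $SO(3)$, and the bookkeeping of the finite-overlap covering near $\partial\Omega$. A secondary, purely quantitative, point requiring care is to track the $\mathcal{O}(h^{1+\gamma/2})$ oscillation of $A^h$ across each $h$-cube and to confirm that all the error terms it generates stay of order $h^{3+\gamma}$ --- which is exactly why the rigidity must be run against the moving targets $SO(3)A^h$.
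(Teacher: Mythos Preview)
Your proposal is correct and is precisely the standard argument. Note, however, that the present paper does not itself supply a proof of this theorem: it is quoted verbatim as \cite[Theorem~1.6]{LMP-prs1}, with only the remark that it ``can be directly obtained from the geometric rigidity estimate \cite{FJMgeo}, in view of the bounds (\ref{strange}).'' Your sketch is exactly the adaptation carried out in \cite{LMP-prs1}: partition into $h$-cubes, freeze $A^h$ at cube centers, apply the Friesecke--James--M\"uller rigidity after the affine change of variables, and patch the resulting constant rotations into a $W^{1,2}(\Omega;SO(3))$ field via mollification and projection. The bookkeeping of the $\mathcal{O}(h^{1+\gamma/2})$ oscillation of $A^h$ across an $h$-cube and the resulting $h^{5+\gamma}$ error terms is handled as you indicate.
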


\medskip

Towards the proof of compactness in Theorem \ref{compactness}, we now
outline the argument in \cite{LMP-prs1} which yields (i) and (ii). We
only emphasize points that lead to the new constraint
(\ref{MP-constraint}).

Assume (\ref{boundinh}) and let $R^h\in W^{1,2}(\Omega, SO(3))$ be the
matrix fields as in Theorem \ref{thapprox}. 
Define the averaged rotations:
$\tilde R^h = \mathbb{P}_{SO(3)}\fint_\Omega R^h,$
which satisfy:
\begin{equation}\label{5.60}
\int_\Omega |R^h - \tilde R^h|^2 \leq C\Big(\int_\Omega |R^h - \fint R^h|^2
+ \mbox{dist}^2(\fint R^h, SO(3))\Big)\leq Ch^{\gamma},
\end{equation}
and also let:
\begin{equation}\label{m00}
\hat R^h = \mathbb{P}_{SO(3)}\fint_{\Omega^h} (\tilde R^h)^T\nabla u^h,
\end{equation}
which is well defined in view of (\ref{m1}) and (\ref{5.60}).
Consequently:
\begin{equation}\label{5.66}
|\hat R^h -\mbox{Id}|^2
%\leq C|\mbox{skew}\fint_{\Omega^h}(\tilde R^h)^T\nabla u^h|^2
\leq C|\fint_{\Omega^h} (\tilde R^h)^T\nabla u^h - \mbox{Id}|^2\leq Ch^{\gamma}.
\end{equation}
Defining: $\bar R^h = \tilde R^h \hat R^h$, or equivalently:
$\ds\bar R^h = \mathbb{P}_{SO(3)}\fint_{\Omega^h}\nabla u^h$,
it follows by (\ref{5.60}),  (\ref{5.66}) and  (\ref{m1}):
\begin{equation}\label{m2}
\int_\Omega |R^h - \bar R^h|^2 \leq Ch^{\gamma} 
\quad\mbox{ and } \quad\lim_{h\to 0} (\bar R^h)^TR^h =\mbox{Id}
\quad \mbox{ in } W^{1,2}(\Omega, \mathbb{R}^{3\times 3}).
\end{equation}

Consider the translation vectors $c^h\in\mathbb{R}^3$, such that: 
\begin{equation}\label{m22}
\int_\Omega V^h = 0 \quad \mbox{ and } \quad \mbox{skew}\int_\Omega\nabla V^h = 0.
\end{equation}
To prove Theorem \ref{compactness} (i), we now use (\ref{5.66}) in:
\begin{equation}\label{5.111}
\begin{split}
\|(\nabla y^h - &\mbox{Id})_{3\times 2}\|^2_{L^2(\Omega^1)} 
\leq \frac{1}{h}\int_{\Omega^h} |(\bar R^h)^T\nabla u^h - \mbox{Id}|^2 \\
&\leq C \left(\frac{1}{h}\int_{\Omega^h}
|(\tilde R^h)^T\nabla u^h - \mbox{Id}|^2 ~\mbox{d}x + |\hat R^h - \mbox{Id}|^2
\right) \leq Ch^\gamma,
\end{split}
\end{equation}
and notice that by (\ref{m1}) one has:
$\ds \|\partial_3 y^h\|^2_{L^2(\Omega^1)} \leq C h \int_{\Omega^h}|\nabla u^h|^2
\leq Ch^2$. This yields convergence of $y^h$ by means of the Poincar\'e
inequality and (\ref{m22}).
We also remark that (\ref{5.111}) implies the weak convergence of $V^h$
(up to a subsequence) in $W^{1,2}(\Omega,\mathbb{R}^3)$. 

\medskip

{\bf 2.} Consider the matrix fields $D^h\in W^{1,2}(\Omega,\mathbb{R}^{3\times 3})$:
\begin{equation}\label{Dh2}
\begin{split}
D^h(x') & =  \frac{1}{h^{\gamma/2}}\fint_{-h/2}^{h/2} (\bar R^h)^TR^h(x') A^h(x',t) - \mbox{Id}
~\mbox{d}t \\ &  = h^{\gamma/2} (\bar R^h)^TR^h(x') S_g(x') 
+ \frac{1}{h^{\gamma/2}}\left((\bar R^h)^TR^h(x') - \mbox{Id}\right).
\end{split}
\end{equation}
By (\ref{m2}) and (\ref{m1}), it clearly follows that:
$\|D^h\|_{W^{1,2}(\Omega)}\leq C$. Hence, up to a subsequence:
\begin{equation}\label{m3}
\begin{split}
\lim_{h\to 0} & D^h = D \quad  \mbox{ and } 
\quad  \lim_{h\to 0} \frac{1}{h^{\gamma/2}} \left((\bar R^h)^TR^h- \mbox{Id}\right) = D \\
& \mbox{ weakly in } W^{1,2}(\Omega, \mathbb{R}^{3\times 3}) 
\mbox{ and (strongly) in } L^q(\Omega, \mathbb{R}^{3\times 3}) \quad \forall q\geq 1.
\end{split}
\end{equation}
Using (\ref{m2}), (\ref{m1}) and the identity
$(R-\mbox{Id})^T(R-\mbox{Id}) = -2\mbox{sym}(R-\mbox{Id})$, valid for
all $R\in SO(3)$, we obtain:
$\ds \|\mbox{sym}((\bar R^h)^TR^h - \mbox{Id})\|_{L^2(\Omega)} \leq
Ch^{\gamma}$.
Consequently, the limiting matrix field $D$ has skew-symmetric values. 

Further, by (\ref{m2}) and (\ref{m3}):
\begin{equation}\label{m5}
\begin{split}
\lim_{h\to 0} \frac{1}{h^{\gamma/2}}\mbox{sym} D^h & =  \lim_{h\to 0} \Bigg(
\mbox{sym} \left((\bar R^h)^TR^h  S_g\right)
- \frac{1}{2}\frac{1}{h^\gamma} \left((\bar R^h)^TR^h(x')- \mbox{Id}\right)^T
\left((\bar R^h)^TR^h(x')- \mbox{Id}\right)\Bigg) \\ & =
\mbox{sym }  S_g + \frac{1}{2}D^2 \quad 
\mbox{ in } L^q(\Omega, \mathbb{R}^{3\times 3}) \quad \forall q\geq 1.
\end{split}
\end{equation}
Regarding convergence of $V^h$, we have:
\begin{equation}\label{hestimate}
\begin{split}
\|\nabla V^h - D^h_{3\times 2}\|_{L^2(\Omega)}^2 &
\leq \frac{C}{h^{\gamma}}\int_\Omega\left| \fint_{-h/2}^{h/2}R^h(x')A^h_{3\times 2}(x',t) 
- \nabla_{tan}u^h(x',t)~\mbox{d}t \right|^2~\mbox{d}x'\\ &
\leq \frac{C}{h^{\gamma+1}}\int_{\Omega^h} |\nabla u^h(x) - R^h(x') A^h(x)|^2~\mbox{d}x
\leq Ch^2,
\end{split}
\end{equation}
and hence by (\ref{m3}) $\nabla V^h$ converges in 
$L^2(\Omega,\mathbb{R}^{3\times 2})$ to $D$. Consequently, by (\ref{m22}):
\begin{equation}\label{m6}
\lim_{h\to 0} V^h = V  \mbox{ in } W^{1,2}(\Omega, \mathbb{R}^{3}),
\quad V\in  W^{2,2}(\Omega, \mathbb{R}^{3})\quad  \mbox{ and } 
~~\nabla V= D_{3\times 2}.
\end{equation}
By Korn's inequality, $V_{tan}$ must be constant, hence $0$ in view of (\ref{m22}). 
This ends the proof of the first claim in Theorem \ref{compactness} (ii).   

\medskip

{\bf 3.} We now show (\ref{MP-constraint}).   By (\ref{5.111}) we have:
$$ \ds \|\sym \nabla V^h\|^2_{L^2(\Omega)} \le \frac 1h \int_{\Omega^h}
|(\bar R^h)^T \nabla u^h - \mbox{Id}|^2 \le Ch^\gamma. $$ 
We conclude, using (\ref{hestimate}) and (\ref{m5}) that:
$$ \ds \lim_{h\to 0} \frac {1}{h^{\gamma/2}} \sym\, \nabla V_{tan}^h
=  \lim_{h\to 0} \Big(\frac {1}{h^{\gamma/2}} \sym  (D^h)_{tan} + \mathcal{O}(h^{1-\gamma/2})\Big) =
(\mbox{sym }  S_g + \frac{1}{2}D^2)_{tan}, $$ 
weakly in $L^2(\Omega)$. As a consequence, Korn's inequality implies
the existence of a displacement field  $w\in W^{1,2}(\Omega,\R^2)$ for which 
$$ \ds \sym \nabla w= (\mbox{sym }  S_g + \frac{1}{2}D^2)_{tan}
= \mbox{sym } ( S_g)_{2\times 2} - \frac 12 \nabla v \otimes
\nabla v, $$  
where we calculated $D^2$ through \eqref{m6}, knowing that $\sym \, D
=0$ and $V= (0,0, v)^T$. Applying the operator $\curl ^T \curl$ to
both sides of the above formula yields the required result.  

\medskip

 {\bf 4.} To prove the lower bound in (ii), define the rescaled strains 
$P^h\in L^2(\Omega^1, \mathbb{R}^{3\times 3})$ by:
$$P^h(x', x_3) = \frac{1}{h^{\gamma/2+1}}\Big((R^h(x'))^T \nabla u^h(x', hx_3)A^h(x', hx_3)^{-1} 
- \mbox{Id}\Big).$$
Clearly, by (\ref{m1}) $\|P^h\|_{L^2(\Omega^1)}\leq C$ and hence, up to a subsequence:
\begin{equation}\label{ma0}
\lim_{h\to 0} P^h = P \qquad \mbox{weakly in } L^2(\Omega^1,\mathbb{R}^{3\times 3}).
\end{equation}

Precisely the same arguments as in \cite{lemopa1}, yield:
\begin{equation}\label{ma3}
P(x)_{3\times 2} = P_0(x')_{3\times 2} + x_3 P_1(x')_{3\times 2},
\end{equation}
for some $P_0\in L^2(\Omega, \mathbb{R}^{3\times 3})$ where:
\begin{equation}\label{madefP1}
P_1(x') = \nabla(D(x')e_3) -  B_g(x').
\end{equation}

\medskip

Before concluding the proof of the lower bound in Theorem
\ref{compactness} (iii), we need to gather a few simple consequences of (\ref{frame}) and (\ref{Q3}). 

\begin{lemma} \label{Q3prop}
Assume that $W$ satisfies (\ref{frame}) and (\ref{Q3}). Then the
quadratic form $Q_3$ is nonnegative, is positive definite on symmetric
matrices, and $Q_3(F) = Q_3(\mathrm{sym} F)$ for all $F\in\mathbb{R}^3$.
\end{lemma}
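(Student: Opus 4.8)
The plan is to deduce all three assertions from one elementary identity: for every $F\in\mathbb{R}^{3\times 3}$,
$$\mathcal{Q}_3(F)=\lim_{t\to 0^+}\frac{W(\mathrm{Id}+tF)}{t^2}.$$
This follows by replacing $F$ with $tF$ in (\ref{Q3}), dividing by $t^2$, using the quadratic homogeneity $\mathcal{Q}_3(tF)=t^2\mathcal{Q}_3(F)$, and letting $\omega(t|F|)\to 0$. Nonnegativity of $\mathcal{Q}_3$ is then immediate, since $W\geq 0$ by (\ref{frame}) forces the right-hand limit to be nonnegative.

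For the identity $\mathcal{Q}_3(F)=\mathcal{Q}_3(\mathrm{sym}\,F)$ I would exploit frame indifference along a curve of rotations tangent to the skew part of $F$. Writing $F=\mathrm{sym}\,F+\mathrm{skew}\,F$ and setting $R_t=\exp(-t\,\mathrm{skew}\,F)\in SO(3)$, a Taylor expansion gives $R_t(\mathrm{Id}+tF)=\mathrm{Id}+t\,\mathrm{sym}\,F+t^2E_t$ with $\sup_{0<t<1}\|E_t\|<\infty$. Frame indifference yields $W(\mathrm{Id}+tF)=W\big(R_t(\mathrm{Id}+tF)\big)=W(\mathrm{Id}+t\,\mathrm{sym}\,F+t^2E_t)$; dividing by $t^2$, applying (\ref{Q3}) to the right-hand side, and invoking bilinearity and continuity of $\mathcal{Q}_3$ to discard the $t^2E_t$ correction, the displayed limit identity (applied both to $F$ and to $\mathrm{sym}\,F$) gives the claim.

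For positive definiteness on symmetric matrices, let $F=F^T\neq 0$. For small $t>0$ the matrix $\mathrm{Id}+tF$ is symmetric and positive definite, so its polar decomposition is trivial and the unique nearest rotation to it is $\mathrm{Id}$; hence $\mathrm{dist}(\mathrm{Id}+tF,SO(3))=t|F|$. The nondegeneracy bound in (\ref{frame}) then gives $W(\mathrm{Id}+tF)\geq ct^2|F|^2$, and the limit identity yields $\mathcal{Q}_3(F)\geq c|F|^2>0$.

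The only point needing a little care — and the one I would flag as the main (though mild) obstacle — is the disposal of the $t^2E_t$ correction in the second step: one must check both that $\mathcal{Q}_3(t\,\mathrm{sym}\,F+t^2E_t)/t^2\to\mathcal{Q}_3(\mathrm{sym}\,F)$, which follows from expanding the quadratic form and using boundedness of $E_t$, and that $\omega\big(|t\,\mathrm{sym}\,F+t^2E_t|\big)\,|t\,\mathrm{sym}\,F+t^2E_t|^2=o(t^2)$, which follows from $|t\,\mathrm{sym}\,F+t^2E_t|\leq Ct$ and $\omega(0^+)=0$ together with the monotonicity of $\omega$. The auxiliary fact that the closest element of $SO(3)$ to a symmetric positive definite matrix is the identity is entirely elementary, being a direct consequence of uniqueness in the polar decomposition.
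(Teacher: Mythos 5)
Your proof is correct and follows essentially the same strategy as the paper's: frame indifference along a curve of rotations is used to eliminate the skew part, and the identity $\mathrm{dist}(\mathrm{Id}+tF,SO(3))=t|F|$ for symmetric $F$ and small $t$ gives positive definiteness via the coercivity in (\ref{frame}). The only difference is organizational — you isolate the limit formula $\mathcal{Q}_3(F)=\lim_{t\to 0^+}W(\mathrm{Id}+tF)/t^2$ as a stepping stone and choose the specific curve $R_t=\exp(-t\,\mathrm{skew}\,F)$, whereas the paper applies (\ref{Q3}) to both sides of $W(\mathrm{Id}+tF)=W(\mathrm{Id}+t(F+A)+\mathcal{O}(t^2))$ for an arbitrary $A\in so(3)$ and passes to the limit directly; these are minor variations of the same argument.
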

\begin{proof}
Let $F\in \mathbb{R}^{3\times 3}$ and $A\in so(3)$. Since
$e^{tA}\in SO(3)$, by the frame invariance of $W$ we get:
\begin{equation*}
\begin{split}
\forall t\in\mathbb{R}\qquad W(\mbox{Id}_3 + tF) & =
W\big(e^{tA}(\mbox{Id}_3 + tF)\big) = W\Big((\mbox{Id}_3 + tA +
\mathcal{O}(t^2))(\mbox{Id}_3 + tF)\Big) \\ & 
= W\big(\mbox{Id}_3 + t(F+A) + \mathcal{O}(t^2)\big).
\end{split}
\end{equation*}
Applying (\ref{Q3}) to both sides of the above equality, it follows
that:
\begin{equation*}
\begin{split}
t^2 |Q_3(F)  - Q_3\big( (F+A) + \mathcal{O}(t)\big)| & = 
|Q_3(tF)  - Q_3( t(F+A) + \mathcal{O}(t^2))| \\ & \leq \omega\big(t|F|\big) t^2
|F|^2 + \omega\big(t|F+A|+\mathcal{O}(t^2)\big) t^2 ||F+A| +
\mathcal{O}(t)|^2. 
\end{split}
\end{equation*}
Dividing both sides by $t^2$ and passing to the limit with $t\to 0$,
implies that $Q_3(F+A) = Q_3(F)$, where we also used the fact that
$\omega$ converges to zero at $0$. Consequently:
$$\forall F\in\mathbb{R}^{3\times 3}\qquad Q_3(F) = Q_3(\mathrm{sym} F).$$

It remains now to prove that $Q_3$ is strictly positive
definite on symmetric matrices. Let $F\in\mathbb{R}^{3\times
  3}_{sym}$. Then, for every $t$ small enough,
$\mbox{dist}(\mbox{Id}_3 + tF, SO(3)) = |(\mbox{Id}_3 + tF)
-\mbox{Id}_3| = |tF|$. It now follows that:
\begin{equation*}
\begin{split}
Q_3(F) & = \frac{1}{t^2} Q_3(tF) \geq \frac{1}{t^2} \Big(
W(\mbox{Id}_3 + tF) - \omega(tF) t^2 |F|^2\Big) \\ & \geq \frac{1}{t^2}
\Big( c ~\mbox{dist}^2(\mbox{Id}_3 + tF, SO(3)) - \omega(tF) t^2 |F|^2
\Big) \geq \frac{c}{2} |F|^2,
\end{split}
\end{equation*}
where again we used (\ref{Q3}) and (\ref{frame}).
\end{proof}

\medskip

We are now ready to conclude the proof of Theorem \ref{compactness}.
Recalling (\ref{Q3}), we obtain:
\begin{equation*}
\begin{split}
\frac{1}{h^{\gamma+2}} W \Big(\nabla u^h(x) A^h(x)^{-1} \Big)&
= \frac{1}{h^{\gamma+2}} W\Big(R^h(x)^T \nabla u^h(x) A^h(x)^{-1} \Big) \\ & = 
\frac{1}{h^{\gamma+2}} W(\mbox{Id} + h^{\gamma/2+1} P^h(x)) = \mathcal{Q}_3(P^h(x))
+  \omega(h^{\gamma/2+1} |P^h|) \mathcal{O}(|P^h(x)|^2).
\end{split}
\end{equation*}
Consider now sets $\mathcal{U}_h = \{x\in \Omega^1; ~~h|P^h(x', x_3)|\leq 1\}$.
Clearly $\chi_{\mathcal{U}_h}$ converges to $1$ in $L^1(\Omega^1)$, with $h\to 0$, 
as $hP^h$ converges to $0$ pointwise a.e. by (\ref{m1}).
Remembering that $\ds \lim_{t \to 0} \omega(t) =0$, we get:
\begin{equation}\label{ma6}
\begin{split}
\liminf_{h\to 0} \frac{1}{h^{\gamma+2}}& I^h_W(u^h)  \geq \liminf_{h\to 0} \frac{1}{h^{\gamma+2}}
\int_{\Omega^1}\chi_{\mathcal{U}_h}W\Big(\nabla u^h(x',hx_3)
A^h(x',hx_3)^{-1}\Big)~\mbox{d}x\\ &
= \liminf_{h\to 0} \left(\int_{\Omega^1}\mathcal{Q}_3(\chi_{\mathcal{U}_h}P^h)
+ o(1) \int_{\Omega^1}|P^h|^2\right)\\ &
\geq \frac{1}{2}\int_{\Omega^1}\mathcal{Q}_3\Big(\mbox{sym }P(x)\Big)~\mbox{d}x,
\end{split}
\end{equation}
where the last inequality follows by (\ref{m1}) guaranteeing convergence to $0$
of the term $o(1)\int |P^h|^2$, and by the fact that $\chi_{\mathcal{U}_h}P^h$
converges weakly to $P$ in $L^2(\Omega^1,\mathbb{R}^{3\times 3})$ (see (\ref{ma0}))
in view of the properties of $\mathcal{Q}_3$ in Lemma \ref{Q3prop}.
Further, by \eqref{defQ} and (\ref{madefP1}):
\begin{equation}\label{ma7}
\begin{split}
\frac{1}{2}\int_{\Omega^1}\mathcal{Q}_3(\mbox{sym }P)  & \geq
\frac{1}{2}\int_{\Omega^1}\mathcal{Q}_2(\mbox{sym }P_{2\times 2}(x))~\mbox{d}x\\ &
= \frac{1}{2}\int_{\Omega^1}\mathcal{Q}_2\Big(\mbox{sym }P_0(x')_{2\times 2}
+ x_3 \mbox{sym }P_1(x')_{2\times 2}\Big)~\mbox{d}x \\ & = 
\frac{1}{2}\int_{\Omega^1}\mathcal{Q}_2(\mbox{sym }P_0(x')_{2\times 2})
+ \frac{1}{2}\int_{\Omega^1}x_3^2\mathcal{Q}_2(\mbox{sym }P_1(x')_{2\times 2})\\ &
\ge \frac{1}{12}\int_{\Omega}\mathcal{Q}_2\Big(\mbox{sym }(\nabla De_3)_{2\times 2}
- (\mbox{sym } B_g)_{2\times 2}\Big).
\end{split}
\end{equation}
Now,  in view of Theorem \ref{compactness} (ii) and (\ref{m6}) one easily sees that:
$$  \Big(\nabla De_3\Big)_{2\times 2} = -\nabla v^2,$$
which yields the claim in Theorem \ref{compactness} (iii), by (\ref{ma6}) and (\ref{ma7}). 
\endproof

\section{Recovery sequence:  Proofs of Theorem \ref{limsup} and Theorem
\ref{minsconverge}}

Recalling (\ref{defQ}), let $c(F)\in {\mathbb R}^3$ be the unique vector so that: 
$${\mathcal Q}_2 (F) = {\mathcal Q}_3 \Big( F^* +  
\mbox{sym}(c \otimes e_3) \Big).$$ 
The mapping $c:{\mathbb R}^{2\times 2}_{sym} \rightarrow {\mathbb R}^3$ 
is well-defined and linear, by the properties of $Q_3$ in Lemma
\ref{Q3prop}. 
Also, for all $F\in {\mathbb R}^{3\times3}$, by $l(F)$ we denote 
the unique vector in ${\mathbb R}^3$, linearly depending on $F$,  for which:
\begin{equation}\label{vecl}
\mbox{sym}\big(F  - (F_{2\times 2})^*\big) 
= \mbox{sym}\big(l(F) \otimes e_3\big).
\end{equation}

\medskip

{\bf 1.}   Let the given out-of-plane displacement $v\in\mathcal{A}_f$ be as in Theorem
\ref{limsup}.  The constraint \eqref{MP-constraint} can be rewritten  as: 
$$ \ds -\frac 12 \curl^T \curl (\nabla v \otimes \nabla v) = - \curl
^T \curl ( S_g)_{2\times 2} =  - \curl ^T \curl
(\sym\, S_g)_{2\times 2} . $$ 
Recall that a  matrix field $B\in L^2 (\Omega , \R_{sym}^{2\times 2})$ 
is in the kernel of the linear operator $\curl^T \curl$ if and only if $B= \sym \nabla w$ for some 
$w\in W^{1,2}(\Omega, \R^2)$. Hence, we conclude that:
$$ \ds \sym \nabla w = - \frac 12 \nabla v \otimes \nabla v  + \sym
( S_g)_{2\times 2}. $$  
By the Sobolev embedding theorem in the two-dimensional domain
$\Omega$, $v\in W^{2,2}(\Omega)$ implies that:
$\nabla v \in W^{1,q}(\Omega, \R^2)$ for all $q<\infty$. 
Consequently: 
$$\sym \nabla w \in W^{1,p}(\Omega,\mathbb{R}^{3\times 3}) \qquad \forall 1\le p<2.$$ 
Fix $1<p<2$ such that: $\gamma>2/p$ and that $W^{1,p}(\Omega)$
embeds in $L^{8}(\Omega)$.  
This is possible since $\gamma<2$ and so $p$ can be chosen as close to
$2$ as we wish.  Using Korn's inequality and through a possible
modification of $w$ by an affine mapping, 
we can assume that: 
$$w\in W^{2,p} \cap W^{1,8}(\Omega,\mathbb{R}^2).$$ 
Call $\lambda= 1/p$ and observe that:
\begin{equation}\label{lambda} 
\frac{2-\gamma} {2(p-1)}  < \lambda < \frac{\gamma}{2}.
\end{equation} 
  
Following  \cite[Proposition 2]{FJMhier}, by partition of unity and a truncantion
argument, as a special case
of the Lusin-type result for Sobolev functions,  there exist sequences
$v^h  \in W^{2,\infty}(\Omega)$   and $w^h \in W^{2,\infty}(\Omega, \R^2)$  
such that:
\begin{equation}\label{vhapprox}
\begin{split}
& \lim_{h\to 0} \|v^h - v\|_{W^{2,2}(\Omega)} + \|w^h - w\|_{W^{2,p}(\Omega, \R^2)}= 0, \\
& \|v^h\|_{W^{2,\infty}(\Omega)}  +  \|w^h\|_{W^{2,\infty}(\Omega, \R^2)}  \leq C h^{-\lambda} ,\\
&  \lim_{h\to 0} h^{-2\lambda} \left|\left\{x\in \Omega; ~~ v^h(x) \neq
  v(x)\right\} \right|+   h^{-p\lambda} \left |\left\{x\in \Omega; ~~ w^h(x)
  \neq w(x)\right\} \right|   =0. 
\end{split}
\end{equation}  
Hence, $\Omega$ is partitioned into a disjoint union $\Omega = \mathcal{U}_h \cup O_h$, where:  
\begin{equation}\label{uhoh}
\begin{split}
&\mathcal{U}_h = \left\{x\in \Omega; ~~ v^h(x) = v(x)\right\} \cap
\left\{x\in \Omega; ~~ w^h(x) = w(x)\right\},\\
& |O_h| =o(h^{p\lambda}) + o(h^{2\lambda}) = o(h^{p\lambda}). 
\end{split}
\end{equation}
We observe that the second order stretching $s(v^h, w^h)$ satisfies:
$$ s(v^h, w^h) = \sym \nabla w^h + \frac 12 \nabla v^h \otimes
\nabla v^h  - \sym ( S_g)_{2\times 2}=0 \quad \mbox{ in }
\mathcal{U}_h. $$ 
Now, a similar argument as in \cite[Lemma 6.1]{lemopa1} yields:
\begin{equation}\label{mainerror}
\ds  \|s(v^h, w^h)\|_{L^\infty(\Omega)}  = o(h^{\lambda (p/2-1)})
\quad \mbox{and} \quad  \|s(v^h, w^h)\|^2_{L^2(\Omega)} =
o(h^{2\lambda(p-1)}). 
\end{equation}

%Also, let $w^h:\Omega\to {\mathbb R}^2$ be such that 
%$$ \displaystyle \lim_{h\to 0}   
%\mbox{sym} \nabla w^h =  - \frac 12 \nabla v \otimes \nabla v  + \sym ( S_g)_{2\times 2}. 
%$$  
%Without loss of generality, we may assume that $w^h$ are smooth, and that:
%\begin{equation}\label{norm}
%\lim_{h\to 0} \sqrt{h}\|w^h\|_{W^{2,\infty}(\Omega)} = 0.
%\end{equation}  

\medskip

{\bf 2.} Define the recovery sequence:
\begin{equation}\label{recoveryseq}
\begin{split}
\forall (x', x_3)\in\Omega^h\qquad u^h (x', x_3) = & \left[\begin{array}{c}x'\\0 \end{array}\right] 
+ \left[\begin{array}{c}h^\gamma w^h(x')\\ h^{\gamma/2}v^h(x')\end{array}\right] 
+ x_3 \left[\begin{array}{c}-h^{\gamma/2}\nabla
    v^h(x')\\1\end{array}\right] \\ &
+  h^\gamma x_3 d^{0,h}(x') + \frac{1}{2}h^{\gamma/2} x_3^2 d^{1,h}(x),
\end{split}
\end{equation} 
where the Lipschitz continuous fields  $d^{0,h}\in W^{1,\infty}(\Omega,\mathbb{R}^3)$ is given by: 
$$ d^{0,h} =  l( S_g) - \frac 12 |\nabla v^h|^2 e_3  
+  c \Big(\mbox{sym} \nabla w^h  +  \frac 12 \nabla v^h \otimes \nabla v^h 
- (\mbox{sym }  S_g)_{2\times2} \Big), $$  
while the smooth fields $d^{1,h}$ obey:
\begin{equation}\label{nd01h}
\lim_{h\to 0} \sqrt{h} \|d^{1,h}\|_{W^{1,\infty}(\Omega)} = 0,
\end{equation}
\begin{equation}\label{d01}
\lim_{h\to 0} d^{1,h} = l( B_g) + c\Big(-\nabla^2 v -
(\mbox{sym } B_g)_{2\times 2}\Big) \quad \mbox{ in } L^2(\Omega). 
\end{equation}  

The convergence statements in (i), (ii) of Theorem \ref{limsup}
are now verified by a straightforward calculation. 
In order to establish (iii) we will estimate 
the energy of the sequence $u^h$ in (\ref{recoveryseq}). Calculating the deformation 
gradient we first obtain:
$$\nabla u^h = \mbox{Id} + h^{\gamma} (\nabla w^h)^* + h^{\gamma/2} D^h-h^{\gamma/2}x_3 (\nabla^2 v^h)^* 
+ h^\gamma \left[\begin{array}{cc} x_3 \nabla d^{0,h} & d^{0,h} \end{array} \right] 
+ h^{\gamma/2} \left[\begin{array}{cc} \frac{1}{2} x_3^2 \nabla d^{1,h} & x_3 d^{1,h}  
\end{array} \right], $$
where the skew-symmetric matrix field $D^h$ is given as: 
$$ D^h = \left [\begin{array}{cc} 0 & - (\nabla v^h)^T\\
\nabla v^h & 0  \end{array}\right]. $$  

Recall that:
$(A^h)^{-1} = \mbox{Id}- h^{\gamma}  S_g - h^{\gamma/2} x_3 B_g +    \mathcal{O}(h^{2\gamma})$.
We hence obtain:
\begin{equation*} 
(\nabla u^h) (A^h)^{-1}  = \mbox{Id} + F^h
\end{equation*} 
where, using $\lambda<\gamma/2< 1$:  
\begin{equation}\label{fh} 
\begin{split} 
F^h & =   h^{\gamma} ((\nabla w^h)^*-  S_g ) + h^{\gamma/2} D^h-h^{\gamma/2}x_3 ((\nabla^2 v^h)^* +  B_g ) 
+ h^\gamma \left[\begin{array}{cc} x_3 \nabla d^{0,h} & d^{0,h} \end{array} \right] 
\\ & \qquad   + h^{\gamma/2} \left[\begin{array}{cc} \frac{1}{2} x_3^2 \nabla d^{1,h} & x_3 d^{1,h}  
\end{array} \right] - h^{\gamma}  S_g - h^{\gamma/2} x_3 B_g
\\ & \qquad +  \mathcal{O}(h^{2\gamma})( |\nabla w^h| + |d^{0,h}|)+
\mathcal{O} ({h^{3\gamma/2}}) |D^h| + \mathcal{O}(h^{1+ \gamma}) \\ & =
o(1). 
\end{split}
\end{equation}  
Hence:
\begin{equation}\label{bigfoot}
(A^h)^{-1,T} (\nabla u^h )^T (\nabla u^h) (A^h)^{-1}=  \mbox{Id}_3 +
2\sym \, F^h + (F^h)^T F^h = \mbox{Id} + K^h + q^h,  
\end{equation} 
where: 
\begin{equation*} 
K^h =   2h^\gamma \sym \Big((\nabla w^h)^* 
- \frac 12 (D^h)^2 -  S_g+  d^{0,h} \otimes e_3\Big)
+ 2h^{\gamma/2}x_3~ \sym \Big(-(\nabla^2 v^h)^* -  B_g + d^{1,h}\otimes e_3\Big),  
\end{equation*} 
and:
\begin{equation*}
\begin{split} 
q^h  & =   \mathcal{O}(h^{2\gamma}) \big( |\nabla w^h| + |\nabla w^h|^2
|d^{0,h}|\big)+ \mathcal{O} ({h^{3\gamma/2}}) |D^h| \big(1+ |\nabla w^h|+|D^h|
+|d^{0,h}|\big)  \\ & \qquad + \mathcal{O}(h^{1+ \gamma-\lambda}) \big(1+ |\nabla w^h|^2 + |D^h|^2
+ |d^{0,h}|^2\big)  + \mathcal {O}(h^{(\gamma+3) /2}) \\ & =o(1).  
\end{split} 
\end{equation*}  
Note that $(D^h)^2 =-  (\nabla v^h \otimes \nabla v^h)^* - |\nabla v^h|^2 (e_3 \otimes e_3)$.
Therefore:
\begin{equation*}
\begin{split}
\mbox{sym} & \left((\nabla w^h)^* - \frac 12 (D^h)^2 -  S_g+ d^{0,h} \otimes e_3\right)\\
& =  \left(\mbox{sym}\nabla w^h  + \frac 12 \nabla v^h \otimes\nabla v^h - 
(\mbox{sym } S_g)_{2\times2}\right)^*
+ \mbox{sym}\left(\big(d^{0,h} - l({ S_g}) + \frac 12 |\nabla v^h|^2 e_3\big) 
\otimes e_3\right) \\ 
&  = s(v^h, w^h)^* + \sym \Big(c(s(v^h, w^h)) \otimes e_3\Big).   
\end{split} 
\end{equation*} 
Call:
\begin{equation*} 
\begin{split}   
b(v^h) & = \mbox{sym}\left(-(\nabla^2 v^h)^* -  B_g + d^{1,h}\otimes e_3 \right) \\
& = \left(-\nabla^2 v^h - (\mbox{sym } B_g)_{2\times 2}\right)^* 
+ \mbox{sym}\left((d^{1,h} - l({ B_g}))\otimes e_3 \right). 
\end{split}
\end{equation*}  
We therefore obtain:
$$ \ds K^h =  2h^{\gamma/2} x_3 b(v^h) + \mathcal{O}(h^{\gamma})
|s(v^h, w^h)| = o(1). $$  
Note also that: 
\begin{equation}\label{limitbending}
\lim_{h\to 0} b(v^h) = \left(-\nabla^2 v - (\mbox{sym
  } B_g)_{2\times 2}\right)^* + \sym\, \Big (  c  \left(-\nabla^2
  v - (\mbox{sym } B_g)_{2\times 2} \right ) \otimes e_3 \Big )
\quad  \mbox{ in }  L^2(\Omega). 
\end{equation} 

\medskip

{\bf 3.} We now observe the following convergence rates:
\begin{lemma}\label{errors}
We have:
\begin{itemize}
\item[(i)] $ \ds h^{-1}  \|q^h\|^2_{L^2(\mathcal{U}_h\times
    (-\frac{h}{2}, \frac{h}{2}))}= o(h^{\gamma+2})$,
\item[(ii)]  $\ds h^{-1}\| |q^h||K^h|\|_{L^1(\mathcal{U}_h \times
    (-\frac{h}{2}, \frac{h}{2}))} = o(h^{\gamma+2})$.
\end{itemize} 
\end{lemma}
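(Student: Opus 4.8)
The plan is to carry out every estimate on the ``good set'' $\mathcal{U}_h$, where the truncated fields $v^h,w^h$ coincide with the fixed limiting fields $v,w$; there the second-order stretching $s(v^h,w^h)$ vanishes and all quantities entering $q^h$ and $K^h$ carry Sobolev bounds uniform in $h$, so the lemma reduces to exponent bookkeeping. Concretely, by \eqref{uhoh} we have $v^h=v$, $w^h=w$, hence $\nabla v^h=\nabla v$, $\nabla^2v^h=\nabla^2v$, $\nabla w^h=\nabla w$ a.e.\ on $\mathcal{U}_h$; thus $s(v^h,w^h)=0$ there and $D^h$, $d^{0,h}$ reduce on $\mathcal{U}_h$ to the $h$-independent fields built from $\nabla v$ and $S_g$ (the argument of $c$ in $d^{0,h}$ being $s(v^h,w^h)=0$). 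Since $\nabla v\in L^q(\Omega)$ for all $q<\infty$ and $\nabla w\in L^8(\Omega)$, the quantities $|D^h|,|D^h|^2,|d^{0,h}|,|d^{0,h}|^2$ are bounded in $L^2(\mathcal{U}_h)$ and $|\nabla w^h|$ in $L^8(\mathcal{U}_h)$, uniformly in $h$; by H\"older's inequality $|\nabla w^h|^2$ and $|\nabla w^h|^2|d^{0,h}|$ are then bounded in $L^2(\mathcal{U}_h)$ as well (this is exactly why the embedding into $L^8$, not merely $L^4$, was required in the construction of $w$). Finally $\nabla^2v\in L^2(\Omega)$ together with \eqref{d01}, \eqref{nd01h} gives $\|b(v^h)\|_{L^2(\mathcal{U}_h)}\le C$.

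For (i), I would substitute these bounds into the displayed formula for $q^h$; the $x_3$-dependence is harmless since $|x_3|\le h/2$, so integration in $x_3$ over $(-h/2,h/2)$ costs only one factor $h$. The four contributions (from the prefactors $h^{2\gamma}$, $h^{3\gamma/2}$, $h^{1+\gamma-\lambda}$, $h^{(\gamma+3)/2}$) give
$$
h^{-1}\|q^h\|^2_{L^2(\mathcal{U}_h\times(-h/2,h/2))}\ \le\ C\big(h^{4\gamma}+h^{3\gamma}+h^{2+2\gamma-2\lambda}+h^{\gamma+3}\big),
$$
and it remains to check that each exponent strictly exceeds $\gamma+2$: one has $4\gamma>\gamma+2$ and $3\gamma>\gamma+2$ because $\gamma>1$ (which holds as $\gamma>2/p$); $2+2\gamma-2\lambda>\gamma+2$ because $\lambda<\gamma/2$, which is precisely the upper bound in \eqref{lambda}; and $\gamma+3>\gamma+2$ trivially. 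As all four inequalities are strict, the right-hand side is $o(h^{\gamma+2})$, proving (i); in particular $\|q^h\|_{L^2(\mathcal{U}_h\times(-h/2,h/2))}=o(h^{(\gamma+3)/2})$.

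For (ii), I would apply Cauchy--Schwarz, $\| |q^h||K^h| \|_{L^1}\le\|q^h\|_{L^2}\|K^h\|_{L^2}$. On $\mathcal{U}_h$ the identity $K^h=2h^{\gamma/2}x_3\,b(v^h)+\mathcal{O}(h^\gamma)|s(v^h,w^h)|$ collapses to $K^h=2h^{\gamma/2}x_3\,b(v^h)$, so $\int_{-h/2}^{h/2}x_3^2\,\mathrm{d}x_3=h^3/12$ and $\|b(v^h)\|_{L^2(\mathcal{U}_h)}\le C$ yield $\|K^h\|_{L^2(\mathcal{U}_h\times(-h/2,h/2))}\le Ch^{(\gamma+3)/2}$. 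Combining with the bound on $\|q^h\|_{L^2}$ from (i) gives $\| |q^h||K^h| \|_{L^1(\mathcal{U}_h\times(-h/2,h/2))}=o(h^{(\gamma+3)/2})\cdot\mathcal{O}(h^{(\gamma+3)/2})=o(h^{\gamma+3})$, hence $h^{-1}\| |q^h||K^h| \|_{L^1}=o(h^{\gamma+2})$, which is (ii).

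The only step requiring genuine care, as opposed to routine calculation, is the integrability bookkeeping in the first paragraph: off $\mathcal{U}_h$ the second derivatives of $v^h,w^h$ are controlled only by $h^{-\lambda}$, so it is essential to run the whole estimate on $\mathcal{U}_h$, where they are replaced by the $h$-independent bounds $\nabla^2v\in L^2$, $\nabla w\in L^8$, $\nabla v\in L^q\ (q<\infty)$ and the terms involving $s(v^h,w^h)$ disappear. Once this is set up, the exponents match exactly because the admissible range \eqref{lambda} for $\lambda$, together with $\gamma>1$, was designed for it; I anticipate no further obstacle.
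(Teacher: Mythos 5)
Your argument is correct and, for part (i), follows the paper's proof essentially verbatim: collect the products of $|\nabla w^h|$, $|D^h|$, $|d^{0,h}|$ into a quantity bounded in $L^1$, carry the prefactors $h^{4\gamma}, h^{3\gamma}, h^{2(1+\gamma-\lambda)}, h^{\gamma+3}$, and check that each exceeds $\gamma+2$ using $\gamma>1$ and $\lambda<\gamma/2$. The one variation worth noting is in part (ii). The paper keeps the full identity $K^h=2h^{\gamma/2}x_3\,b(v^h)+\mathcal{O}(h^\gamma)|s(v^h,w^h)|$, applies Cauchy--Schwarz, and estimates the $s(v^h,w^h)$ contribution by the global $L^2$ bound in \eqref{mainerror}, which is $o(h^{\lambda(p-1)})$; closing that estimate requires the \emph{lower} bound $\lambda>\tfrac{2-\gamma}{2(p-1)}$ in \eqref{lambda}. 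You instead observe that on $\mathcal{U}_h$ one has $s(v^h,w^h)\equiv 0$, so that $K^h$ restricted to $\mathcal{U}_h\times(-h/2,h/2)$ is \emph{exactly} $2h^{\gamma/2}x_3\,b(v^h)$, and then $\|K^h\|_{L^2(\mathcal{U}_h\times(-h/2,h/2))}\le Ch^{(\gamma+3)/2}$ follows immediately from $\|b(v^h)\|_{L^2}\le C$. This is a genuine simplification: it avoids invoking \eqref{mainerror} and the lower bound of \eqref{lambda} in part (ii) entirely (the lower bound of \eqref{lambda} is of course still needed elsewhere, in the $\mathcal{O}(h^{2\gamma})\|s(v^h,w^h)\|^2_{L^2}$ term of the final energy estimate). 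Your integrability bookkeeping and the supporting observation that derivatives of Sobolev functions agree a.e.\ on the coincidence set are both correct, and your bound $\|q^h\|_{L^2(\mathcal{U}_h\times(-h/2,h/2))}=o(h^{(\gamma+3)/2})$ is the right intermediate form to feed into Cauchy--Schwarz.
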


\begin{proof}
Recall that $v^h$  and $w^h$  are uniformly bounded in $W^{1,8}(\Omega)$. To prove (i) observe that: 
\begin{equation*}
\ds \frac{1}{h} \|q^h\|^2_{L^2(\mathcal{U}_h\times
    (-\frac{h}{2}, \frac{h}{2}))} \leq  \|C^h\|_{L^1(\Omega)}
\mathcal{O} (h^{4\gamma}  + h^{ 3\gamma} + h^{2(1+\gamma-\lambda)} +
h^{\gamma+3}) = o(h^{\gamma+2}), 
\end{equation*} 
where we collected all the terms involving $|D^h|, |\nabla w^h|$ and
$|d^{0,h}| \le C(1+|\nabla w^h|+|D^h|^2)$ in the quantity $C^h$, which can be shown
to be uniformly bounded in $L^1(\Omega)$. 

To see (ii), we estimate:
\begin{equation*}
\begin{split}
\frac{1}{h} \| |q^h||K^h|\|_{L^1(\mathcal{U}_h\times
    (-\frac{h}{2}, \frac{h}{2}))}  & \le h^{-1/2}
\|q^h\|_{L^2} \Big  (h^{(\gamma+2)/2}
\|b(v^h)\|_{L^2(\Omega)} + h^\gamma \|s(v^h, w^h)\|_{L^2(\Omega)}\Big)  \\ &  
=  o(h^{(\gamma+2)/2}) \Big [  h^{(\gamma+2)/2}  + o(h^{\gamma+
  \lambda (p-1)}) \Big]  \\ & = o(h^{\gamma+2}) + o(h^{3\gamma/2 +
  \lambda p - \lambda +1}) = o(h^{\gamma+2}), 
\end{split}   
\end{equation*} where we used (i), \eqref{mainerror} and \eqref{limitbending}. 
\end{proof}

Now we observe that, since $F^h= o(1)$ in (\ref{fh}), the matrix field $\mbox{Id}_3 +
F^h$ is uniformly close to $SO(3)$ for appropriately small $h$, and hence it
has a positive determinant.  
By (\ref{bigfoot}) and in view of the polar decomposition theorem,
there exists an $SO(3)$ valued field
$R^h:\Omega^h\to\mathbb{R}^{3\times 3}$ such that: 
$$ \mbox{Id}_3 + F^h =R^h \sqrt{\mbox{Id} + K^h + q^h} \quad 
\mbox{ in } \Omega^h. $$ 
We hence obtain, by Taylor expanding the square root operator around $\mbox{Id}_3$, and using \eqref{frame} :
\begin{equation*}
W \Big(\nabla u^h (A^h)^{-1} \Big )   = W \Big( R^h( \sqrt{\mbox{Id}_3
  + K^h + q^h}) \Big )  = W \Big ( \mbox{Id}_3 + \frac12 (K^h + q^h) +
\mathcal{O}(|K^h + q^h|^2) \Big ).  
\end{equation*}  
Recalling (\ref{Q3}), we hence obtain:
\begin{equation*}\label{bigexpansion} 
\begin{split}
W \Big(\nabla u^h (A^h)^{-1} \Big )  & \leq \mathcal{Q}_3
 \left(\frac{1}{2} (K^h + q^h) + \mathcal{O}(|K^h + q^h|^2)\right) \\
 &  \qquad + \omega\Big
 (|K^h+ q^h| +  \mathcal{O}(|K^h + q^h|^2) \Big ) \Big ||K^h+ q^h|+
 \mathcal{O}(|K^h + q^h|^2)\Big |^2 \\ &  
\leq  \mathcal{Q}_3 \left(\frac{1}{2} K^h\right) +
\mathcal{O}\left(|K^h||q^h| + |q^h|^2\right) +  o(1) |K^h|^2, 
\end{split}
\end{equation*}  
where we used the fact that $|K^h| + |q^h| = o(1)$ and
$\omega(t) \to 0$ as $t\to 0$. We now estimate the energy $I^h_W$
using  the above inequality and Lemma \ref{errors}: 
\begin{equation*}
\begin{split}
I^h_W(u^h) & = \frac 1h \int_{\Omega^h} W(\nabla u^h (A^h)^{-1}) 
= \frac{1}{h} \int_{\Omega^h}  \mathcal{Q}_3 \left(\frac{1}{2} K^h\right) +
\mathcal{O}\left(|K^h||q^h| + |q^h|^2\right)+  o(1) |K^h|^2 ~\mbox{d}x
\\  &  \leq \frac{1}{h} \int_{\Omega^h} \mathcal{Q}_3\Big(h^{\gamma/2} x_3 b(v^h) +
\mathcal{O}(h^{\gamma}) |s(v^h, w^h)|\Big)~ \mbox{d}x +  o(h^{\gamma+2}).  
\end{split}
\end{equation*} 
Integrating in the $x_3$ direction and applying the estimate \eqref{mainerror} finally yields:
\begin{equation*} 
\begin{split}
I^h_W(u^h) & \le  \frac{1}{12} \int_{\Omega}  h^{\gamma+2}
\mathcal{Q}_3 \Big(b(v^h) \Big)~\mbox{d}x + \mathcal{O}(h^{2\gamma}) \|s(v^h,
w^h)\|^2_{L^2(\Omega)} + o(h^{\gamma+2}) \\ &  =   
\frac{1}{12} h^{\gamma+2}\int_{\Omega} \mathcal{Q}_3 \Big(b(v^h)\Big)~\mbox{d}x 
+ o(h^{2\lambda(p-1)+ 2\gamma}) + o(h^{\gamma+2}) 
\\ & =   \frac{1}{12} h^{\gamma+2}\int_{\Omega} \mathcal{Q}_3
\Big(b(v^h) \Big)~\mbox{d}x +  o(h^{\gamma+2}),
\end{split}
\end{equation*} 
since by the choice of $\lambda$ in \eqref{lambda}, 
 we have $2\lambda(p-1)+ 2\gamma > \gamma+2$.  In view of \eqref{limitbending} it follows that:
\begin{equation}\label{finalestimate}
\limsup_{h\to 0} \frac{1}{h^{\gamma+2}} I^h_W(u^h) \le  \mathcal{I}_f (v), 
\end{equation} 
which, combined with Theorem \eqref{compactness},  proves the desired limit (iii) in Theorem \ref{limsup}.
\endproof
  
\bigskip
 
Theorem \ref{minsconverge} follows now from the next result:
 
\begin{lemma}\label{GCM}
When $\Omega$ is simply connected, the following are equivalent:
\begin{itemize}
\item[(i)]  There exists
$v\in W^{2,2}(\Omega)$ such that $\det (\nabla^2 v) = -\curl ^T \curl ( S_g)_{2\times 2}$ and $\mathcal{I}_f(v) = 0$,
\item[(ii)] $\mathrm{curl }\big((\mathrm{sym }~ B_g)_{2\times 2}\big) = 0$
and $\displaystyle\mathrm{curl}^T\mathrm{curl}~ ( S_g)_{2\times 2}= 
-  \mathrm{det}\big((\mathrm{sym }~ B_g)_{2\times 2}\big).$
\end{itemize}
The two equations in (ii) are the linearized Gauss-Codazzi-Mainardi 
equations corresponding to the metric
$\mathrm{Id} + 2 h^{\gamma} (\mathrm{sym}~  S_g)_{2\times 2}$
and the shape operator $h^{\gamma/2}(\mathrm{sym }~ B_g)_{2\times 2}$
on the mid-plate $\Omega$.
\end{lemma}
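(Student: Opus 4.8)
\emph{Strategy.} Everything hinges on one elementary reformulation: by the definition \eqref{linpresKirchhoff} of $\mathcal{I}_f$, and since $\mathcal{Q}_2$ is nonnegative with $\mathcal{Q}_2(F)=\mathcal{Q}_2(\sym F)$ and, thanks to \eqref{defQ} together with Lemma \ref{Q3prop}, strictly positive definite on symmetric matrices, for any $v\in\mathcal{A}_f$ one has $\mathcal{I}_f(v)=0$ if and only if $\nabla^2 v+(\sym B_g)_{2\times 2}=0$ a.e. in $\Omega$ (both summands are symmetric, so the $\mathcal{Q}_2$-integrand vanishes a.e. precisely when their sum does). With this working characterization of (i) in hand, the proof splits into the two implications, each being a short computation combined with the Poincar\'e lemma on the simply connected domain $\Omega$.

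\emph{Proof of (i)$\Rightarrow$(ii).} Given $v$ as in (i), I would use $\nabla^2 v=-(\sym B_g)_{2\times 2}$. Each row of a Hessian is a distributional gradient (the $i$-th row of $\nabla^2v$ is $\nabla(\partial_i v)$ with $\partial_i v\in W^{1,2}$), hence has vanishing curl; therefore $\curl\big((\sym B_g)_{2\times 2}\big)=-\curl(\nabla^2 v)=0$, which is the first equation in (ii). For the second, $v\in\mathcal{A}_f$ gives $\det\nabla^2 v=f=-\curl^T\curl(S_g)_{2\times 2}$, while on the other hand $\det\nabla^2 v=\det\big(-(\sym B_g)_{2\times 2}\big)=\det\big((\sym B_g)_{2\times 2}\big)$ because a $2\times 2$ determinant is insensitive to an overall sign; equating the two expressions yields $\curl^T\curl(S_g)_{2\times 2}=-\det\big((\sym B_g)_{2\times 2}\big)$. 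Note that $\nabla^2 v$ equals the smooth field $-(\sym B_g)_{2\times 2}$, so $v$ is in fact smooth and $\det\nabla^2 v$ carries no ambiguity.

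\emph{Proof of (ii)$\Rightarrow$(i).} From the first equation in (ii), the smooth symmetric field $M:=-(\sym B_g)_{2\times 2}$ has curl-free rows on the simply connected $\Omega$, so by the Poincar\'e lemma each row is a gradient: there are $\phi_1,\phi_2$ with $\nabla\phi_i$ equal to the $i$-th row of $M$. Symmetry of $M$ forces $\partial_2\phi_1=M_{12}=M_{21}=\partial_1\phi_2$, so $(\phi_1,\phi_2)$ is itself curl-free, and applying the Poincar\'e lemma once more, $(\phi_1,\phi_2)=\nabla v$ for some (smooth) $v$ with $\nabla^2 v=M=-(\sym B_g)_{2\times 2}$. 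Finally, the second equation in (ii) gives $\det\nabla^2 v=\det\big((\sym B_g)_{2\times 2}\big)=-\curl^T\curl(S_g)_{2\times 2}=f$, so $v\in\mathcal{A}_f$; by the reformulation above $\mathcal{I}_f(v)=0$, which is (i).

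\emph{The closing remark and the main obstacle.} To justify the last sentence, identifying the two equations in (ii) as the linearized Gauss--Codazzi--Mainardi system, I would expand the Codazzi and Gauss equations for the one-parameter family of metrics $\mathrm{Id}+2h^{\gamma}(\sym S_g)_{2\times 2}$ with shape operators $h^{\gamma/2}(\sym B_g)_{2\times 2}$, retaining the leading-order terms in $h$ exactly as in the discussion around \eqref{compa} and \eqref{curv}; this is a direct calculation requiring no new idea. There is no serious obstacle in the whole proof: the only points that need a moment's care are the strict positive definiteness of $\mathcal{Q}_2$ on symmetric matrices (so that vanishing of the $\mathcal{Q}_2$-integral forces pointwise vanishing of its matrix argument), and the ``double'' application of the Poincar\'e lemma --- first to each row, then to the resulting vector field --- which reconstructs a single scalar potential $v$ with prescribed Hessian, together with the bookkeeping that keeps the regularity in $W^{2,2}$ (indeed smooth, since $B_g$ is smooth) so that $\det\nabla^2 v$ is unambiguous.
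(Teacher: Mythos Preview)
Your proof is correct and is precisely the ``straightforward'' argument the paper alludes to (the paper does not spell out the proof, deferring instead to \cite[Lemma 6.1]{LMP-prs1}). Your reformulation via the strict positive definiteness of $\mathcal{Q}_2$ on symmetric matrices, followed by the Hessian/curl observation for (i)$\Rightarrow$(ii) and the double Poincar\'e lemma for (ii)$\Rightarrow$(i), is exactly the intended route; the only superfluous remark is that $\mathcal{Q}_2(F)=\mathcal{Q}_2(\sym F)$, which you do not need since $\nabla^2 v+(\sym B_g)_{2\times 2}$ is already symmetric.
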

\begin{proof}
The proof is straightforward and equivalent to that of \cite[Lemma 6.1]{LMP-prs1}.
 \end{proof}

\begin{remark}
%TODO: Can it be improved to $\gamma>2/3$?  Note that even
%  if we assume $v$ to be Lipschitz so that we can take $p=2$, we need
%  to still have more cancellations for better $K^h$ and $e^h$,   
%control terms of order $h^{3\gamma/2}$ here and there to have Lemma
%\ref{errors}  valid for $\gamma \le 1$.  Most importantly, in order to
%have the truncation work, we need to have  
%$1-\gamma/2  <\lambda <\gamma/2$, which yields $\gamma>1$. The first
%inequality $1-\gamma/2  <\lambda$  is necessary to make the error in
%stretching $s(v^h, w^h)$ vanish, and the second  $\lambda <\gamma/2$  
%is used in various instances in the estimates, and is the one which
%must be relaxed through better estimates/cancellations when $\gamma
%\le 1$.

Another construction of the recovery sequence, following the general
approach of \cite{FJMgeo}, will appear in
\cite{POthesis}. We briefly present this argument for the simplified
case when $ B_g = 0$.  Define $u^h$ as in (\ref{recoveryseq}),
where instead of (\ref{nd01h}) and (\ref{d01}) we require the
following of the Lipschitz warping coefficients $d^{0,h}$ and $d^{1,h}$:
\begin{equation}\label{new1}
\begin{split}
& d^{0,h} = l( S_g) - \frac{1}{2}|\nabla v^h|^2 e_3,\\
& \lim_{h\to 0}\|d^{1,h} - c(-\nabla^2 v)\|_{L^2(\Omega)} = 0, \qquad 
\lim_{h\to 0} h^{\gamma/2} \|d^{1,h}\|_{W^{1,\infty}(\Omega)} = 0.
\end{split}
\end{equation}
The truncation sequences $v^h\in W^{2,\infty}(\Omega)$ and
$w^h\in W^{1,\infty}(\Omega,\mathbb{R}^2)$ should satisfy the conditions
below. Define the truncation scale and the truncation exponent:
$$ \lambda= 1+\frac{\gamma}{2}, \qquad q = \frac{2+\gamma}{\gamma-1} > 4,$$
so that $w\in W^{1,q}(\Omega,\mathbb{R}^2)$. Then, given an
appropriately small constant $ \epsilon_0>0$, the result in \cite[Proposition
2]{FJMhier} allows for having:
\begin{equation}\label{new}
\begin{split}
& \lim_{h\to 0} \|v^h - v\|_{W^{2,2}(\Omega)} + \|w^h - w\|_{W^{1,q}(\Omega, \R^2)}= 0, \\
& \|v^h\|_{W^{2,\infty}(\Omega)}\leq  \epsilon_0 h^{-\lambda}, \qquad
\|w^h\|_{W^{1,\infty}(\Omega, \R^2)}  \leq  \epsilon_0 h^{-2\lambda/q},\\ 
&  \lim_{h\to 0} h^{-2\lambda} \left|\left\{x\in \Omega; ~~ v^h(x) \neq
  v(x)\right\} \right|+   h^{-2\lambda} \left |\left\{x\in \Omega; ~~ w^h(x)
  \neq w(x)\right\} \right|   =0,
\end{split}
\end{equation}
where the constants $C$ above depend only on $\Omega$ and $\gamma$,
but are independent of $h$ and $ \epsilon_0$.
The main new observation follows now from the Brezis-Wainger inequality
\cite[Theorem 2.9.4]{ziemer}, applied to the sequence $\nabla v^h\in
W^{1,4}$, uniformly bounded in ${W^{1,2}}$, which yields:
\begin{equation}\label{BW} 
\|\nabla v^h\|_{L^\infty} \leq C\Big(1+\log^{1/2}\big(1+ \|\nabla
v^h\|_{W^{1,4}}\big)\Big)\leq C\Big(1+\log^{1/2}\big(1+
 S_0h^{-\lambda}\big)\Big) \leq C\log(1/h)
\end{equation}
for all $h$ sufficiently small. In particular: $\|\nabla
v^h\|_{L^\infty} \leq C h^{-\gamma/4}$ and as a result, we obtain the following
bounds:
$$\|D^h\|_{L^\infty}\leq C h^{-\gamma/4}, \quad \|d^{0,h}\|_{L^\infty}
\leq C(1+h^{-\gamma/2}),\quad \|\nabla d^{0,h}\|_{L^\infty} \leq C(1+ h^{-\lambda-\gamma/4}),$$
which together with (\ref{new1}), (\ref{new}) give:
$$\|\nabla u^h - \mbox{Id}_3\|_{L^\infty}\leq C \epsilon_0.$$
Consequently:
$$\mbox{dist}(\nabla u^h (A^h)^{-1}, SO(3)) \leq \|\nabla u^h
(A^h)^{-1} - \mbox{Id}_3\|_{L^\infty} 
\leq \|\nabla u^h - \mbox{Id}_3\|_{L^\infty} 
+ \|\nabla u^h \big((A^h)^{-1} - \mbox{Id}_3\big)\|_{L^\infty}\leq C \epsilon_0,$$
for all $h$ sufficiently small.
Let the sets $\mathcal{U}_h$, $O_h$ be as in (\ref{uhoh}). Then, in
view of boundedness of $W$ close to $SO(3)$ and (\ref{new}) we have:
$$\frac{1}{h^{2+\gamma}}\frac{1}{h}\int_{O_h\times (-\frac{h}{2}, \frac{h}{2})} W(\nabla u^h (A^h)^{-1})~\mbox{d}x\leq
\frac{C}{h^{2+\gamma}} |O_h| = \frac{C}{h^{-2\lambda}} |O_h| \to 0 \quad \mbox{ as } h\to 0,$$
while on the ``good set'' $\mathcal{U}_h$, the estimates follow using
the fact that $v^h =v$ and $w^h=w$,  as in \cite{FJMgeo}.
\end{remark}

\section{The matching property and an efficient recovery sequence: A
  proof of Theorem \ref{matching} and Theorem \ref{limsup2}} 

{\bf 1.} We decompose the unknown vector field $w_\epsilon$ into its tangential
and normal components: 
$$w_\epsilon = w_{\epsilon,tan} + w_\epsilon^3e_3,$$ 
where $w_{\epsilon,tan}\in\mathcal{C}^{2,\beta}(\bar\Omega,\mathbb{R}^2)$. Denoting: 
$z_\epsilon = \epsilon w_\epsilon^3\in\mathcal{C}^{2,\beta}(\bar\Omega,\mathbb{R})$,
the equation (\ref{metric}) is equivalent to:
\begin{equation}\label{metric3}
\nabla(\mbox{id}_2 + \epsilon^2w_{\epsilon,tan})^T \nabla(\mbox{id}_2 + \epsilon^2w_{\epsilon,tan}) = 
\mbox{Id}_2 + 2\epsilon^2(\mbox{sym }S_g)_{2\times 2} - \epsilon^2 (\nabla v +
\nabla z_\epsilon)\otimes (\nabla v + \nabla z_\epsilon) + \epsilon^3 s_\epsilon.
\end{equation}
We shall first find the formula for the Gaussian curvature of the 2d
metric in the right hand side of (\ref{metric3}), where we denote
$v_1=v+z_\epsilon$, and:
\begin{equation}\label{8.4}
g_\epsilon(z_\epsilon) = \mbox{Id}_2 + 2\epsilon^2(\mbox{sym }S_g)_{2\times 2} -
\epsilon^2 \nabla v_1 \otimes \nabla v_1 + \epsilon^3 s_\epsilon.
\end{equation}
Call $P_\epsilon = [P_{ij}]_{i,j=1,2} = \mbox{Id}_2 +
2\epsilon^2(\mbox{sym } S_g)_{2\times 2} + \epsilon^3 s_\epsilon$. The Christoffel symbols, the
inverse and the determinant of $P_\epsilon$, satisfy:
\begin{equation}\label{chri}
\begin{split}
& \Gamma_{ij}^k = \frac{1}{2}P^{kl}\left(\partial_jP_{il} + \partial_i
  P_{jl} - \partial_j P_{ij}\right) = 1 +\mathcal{O}(\epsilon^2)\\
& (P_\epsilon)^{-1} = [P^{ij}] = \frac{1}{\mathrm{det} P_\epsilon}
\mathrm{cof}[P_{ij}] = \mbox{Id}_2 +\mathcal{O}(\epsilon^2)\\
& \det P_\epsilon  = 1+\mathcal{O}(\epsilon^2).
\end{split}
\end{equation}

By \cite[Lemma 2.1.2]{hanhong}, we have:
\begin{equation}\label{cur}
\kappa\big(P_\epsilon - \epsilon^2\nabla v_1\otimes\nabla v_1\big)  =
\frac{\kappa(P_\epsilon)}{\big(1-\epsilon^2(P^{ij}\partial_iv_1 \partial_jv_1)\big)^2}
- \frac{\epsilon^2 \mathrm{det}(\nabla^2 v_1 -
  [\Gamma_{ij}^k\partial_kv_1]_{ij})}{\big(1-\epsilon^2(P^{ij}\partial_iv_1 \partial_jv_1)\big)^4\mathrm{det}P_\epsilon}.
\end{equation}
In fact, the formula above is obtained, by a direct calculation, for
$v_1$ smooth. When $v_1\in \mathcal{C}^{2,\beta}$, one approximates $v_1$ by
smooth sequence $v_1^n$, and notes that each $\kappa_n = \kappa(\mbox{Id}_2 +
2\epsilon^2(S_g)_{2\times 2} - \epsilon^2(\nabla v_1^n\otimes
\nabla v_1^n)) + \epsilon^3 s_\epsilon$ is given by (\ref{cur}), while the sequence $\kappa_n$ converges in
$\mathcal{C}^{0,\beta}$ to the right hand side in (\ref{cur}). Since
$\kappa_n$ converges in distributions to
$\kappa(P_\epsilon -\epsilon^2(\nabla v_1\otimes
\nabla v_1))$, as follows from the definition of Gauss curvature
$\kappa = {R_{1212}}/{\mbox{det} g_\epsilon}$, (\ref{cur}) holds for
$v_1\in \mathcal{C}^{2,\beta}$ as well.

\medskip

{\bf 2.} We now see that  $ \kappa(g_\epsilon(z_\epsilon)) = 0$ if and
only if $\Phi(\epsilon, z_\epsilon) = 0$, where:
\begin{equation*}
\begin{split}
\Phi(\epsilon,z) = & 
\big(1-\epsilon^2P^{ij}\partial_i(v + z) \partial_j(v+z)\big)^2 \big(\det
P_\epsilon \big) \frac{1}{\epsilon^2} \kappa(P_\epsilon)
- \mbox{det}\big(\nabla^2 v +\nabla^2 z - [\Gamma_{ij}^k\partial_k (v+z)]_{ij}\big).
\end{split}
\end{equation*}
Consider $\Phi:(-\epsilon_0,\epsilon_0) \times
\mathcal{C}^{2,\beta}_0(\bar\Omega,\mathbb{R}) \rightarrow 
\mathcal{C}^{0,\beta}(\bar\Omega,\mathbb{R}) $
and look for $z_\epsilon \in \mathcal{C}^{2,\beta}_0(\bar\Omega,\mathbb{R})$  satisfying
$\Phi(\epsilon, z_\epsilon)=0$. By using (\ref{curv}) to approximate
$\kappa(P_\epsilon)$ and recalling (\ref{chri}), we get: 
\begin{equation*}
\begin{split}
\Phi(\epsilon,z) = & 
- \big(1+\mathcal{O}(\epsilon^2)|\nabla v + \nabla z|^2\big)^2 (1+\mathcal{O}(\epsilon^2))
\big(\mbox{curl}^T\mbox{curl}(S_g)_{2\times 2} + \mathcal{O}(\epsilon^2)\big)\\
&\qquad\qquad \qquad\qquad  \qquad\qquad 
- \mbox{det}\big(\nabla^2 v +\nabla^2 z +\mathcal{O}(\epsilon^2)|\nabla v + \nabla z|\big).
\end{split}
\end{equation*}
It easily follows that: $\Phi(0,0) = -
\mbox{curl}^T\mbox{curl}(S_g)_{2\times 2} -\det\nabla^2 v =0$, and that
the partial derivative $\mathcal{L} = \partial
\Phi/\partial z (0,0) : \mathcal{C}_0^{2,\beta}(\bar\Omega,\mathbb{R})
\rightarrow \mathcal{C}^{0,\beta}(\bar\Omega,\mathbb{R})$ is a linear
continuous operator of the form:
\begin{equation*}
\begin{split}
\forall z\in\mathcal{C}_0^{2,\beta} \qquad
\mathcal{L}(z) &= \lim_{\epsilon\to 0} \frac{1}{\epsilon}
\Phi(0,\epsilon z) = - \lim \frac{1}{\epsilon} \big(\det(\nabla^2v +
\epsilon\nabla^2z) - \det\nabla^2v\big) = -\mbox{cof} \nabla^2v:\nabla^2 z.
\end{split}
\end{equation*}
Clearly, $\mathcal{L}$ above is invertible to a continuous linear operator,
because of the uniform ellipticity of $\nabla^2v$, implied by
$\det\nabla^2v $ being strictly positive. By the implicit
function theorem there exists hence the solution operator:
$\mathcal{Z}:(-\epsilon_0, \epsilon_0)\rightarrow
\mathcal{C}_0^{2,\beta}(\bar\Omega,\mathbb{R})$  
such that $z_\epsilon = \mathcal{Z}(\epsilon)$
satisfies $\Phi(\epsilon, z_\epsilon)=0$.
Moreover:
$$\mathcal{Z}'(0) = \mathcal{L}^{-1}\circ \left(\frac{\partial
    \Phi}{\partial \epsilon} (0,0)\right) = 0, \quad \mbox{ because }
\frac{\partial \Phi}{\partial \epsilon} (0,0) = 0. $$
Consequently, we also obtain: $ \|w_\epsilon^3\|_{\mathcal{C}^{2,\beta}} = \frac{1}{\epsilon}
\|z_\epsilon\|_{\mathcal{C}^{2,\beta}} \to 0$,  as $\epsilon\to 0$.

\medskip

{\bf 3.} By \cite{mardare} it now follows that for each small $\epsilon$
there is exactly one (up to rotations) orientation preserving isometric immersion
$\phi_\epsilon\in\mathcal{C}^2(\bar\Omega, \mathbb{R}^2)$ of $g_\epsilon(z_\epsilon)$:
\begin{equation}\label{isomh}
\nabla\phi_\epsilon^T\nabla\phi_\epsilon = g_\epsilon(z_\epsilon)  \quad \mbox{and} \quad \det\nabla\phi_\epsilon>0.
\end{equation}
We now sketch the argument that in fact: $\phi_\epsilon = \mbox{id}
+\epsilon^2w_{\epsilon,tan}$ with some $w_{\epsilon,tan}$ uniformly bounded in
$\mathcal{C}^{2,\beta}(\bar\Omega,\mathbb{R}^2)$. The proof
proceeds as in \cite[Theorem 4.1]{LMP-arma}, where the reader may
find many more details. 
Firstly, (\ref{isomh}) is equivalent to:
$\nabla^2\phi_\epsilon - [\tilde \Gamma_{ij}^k \partial_k\phi_\epsilon]_{ij} = 0,$
where $\tilde\Gamma_{ij}^k$ are the Christoffel symbols of the
metric $g_\epsilon(z_\epsilon)$ in (\ref{8.4}). By (\ref{isomh})  and the boundedness of $\tilde\Gamma_{ij}^k$, it follows that:
$\|\phi_\epsilon\|_{\mathcal{C}^{2,\beta}(\bar\Omega,\mathbb{R}^2)} \leq C.$
Further, $\|\tilde \Gamma_{ij}^k\|_{\mathcal{C}^{0,\beta}} =
\mathcal{O}(\epsilon^2)$ and so:
\begin{equation}\label{cc}
\exists A_\epsilon\in\mathbb{R}^{2\times 2} \qquad 
\|\nabla\phi_\epsilon - A_\epsilon\|_{\mathcal{C}^{1,\beta}}\leq C\epsilon^2.
\end{equation}
In fact, $\mbox{dist}(A_\epsilon, SO(3))\leq C\epsilon^2$, so without loss of generality: $\|\nabla\phi_\epsilon -
\mbox{Id}_3\|_{\mathcal{C}^{1,\beta}}\leq C\epsilon^2$ and therefore:
$ \|\phi_\epsilon - \mbox{id}\|_{\mathcal{C}^{2,\beta}} \leq C\epsilon^2.$
Consequently, $\phi_\epsilon = \mbox{id}_2 + \epsilon^2w_{\epsilon,tan}$ with $\|w_{\epsilon,
  tan}\|_{\mathcal{C}^{2,\beta}} \leq C$. This ends the proof of
Theorem \ref{matching}.
\endproof

\bigskip

{\bf 4.} We now sketch the proof of  Theorem \ref{limsup2}. The
complete calculations are similar to \cite[Theorem 3.5]{LMP-arma} and
can be found in \cite{POthesis}. We recall first  a result on density of regular solutions to the
elliptic 2d Monge-Amp\`ere equation:

\begin{proposition}\label{thm-density}\cite[Theorem 3.2]{LMP-arma}
Assume that $\Omega$ is star-shaped with respect to an
interior ball $B\subset \Omega$. For a constant $c_0>0$, recall the definition:
$$ {\mathcal A}_{c_0}= \big\{ u\in W^{2,2}(\Omega); ~~ \det\nabla^2 u=c_0
    \mbox{ a.e. in } \Omega \big\}. $$ 
Then $\mathcal A_{c_0} \cap C^\infty( \bar \Omega)$ is dense in ${\mathcal A}_{c_0}$ with 
respect to the $W^{2,2}$ norm.
\end{proposition}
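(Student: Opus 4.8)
The plan is to reduce the density claim, via a dilation that exploits the star-shapedness, to the interior regularity theory for the two-dimensional Monge--Amp\`ere equation with a strictly positive right-hand side. After the rescaling $v\mapsto c_0^{-1/2}v$ we may assume $c_0=1$, and after a translation that the ball $B$ from the hypothesis is centered at the origin. Since $\det\nabla^2v=1>0$ almost everywhere, $\nabla^2v$ is a definite matrix a.e.\ on the connected set $\Omega$, and one checks (a known property of $W^{2,2}$ solutions of $\det\nabla^2v=f$ with $f$ of constant sign in two dimensions) that the sign is fixed; replacing $v$ by $-v$ we may hence assume $v$ convex. Being convex and of class $W^{2,2}$, $v$ is an Alexandrov solution, i.e.\ its Monge--Amp\`ere measure equals $\det\nabla^2v\,\mathrm{d}x=\mathrm{d}x$.

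For $t\in(0,1)$, star-shapedness of $\Omega$ with respect to $B$ yields the compact containment $t\,\overline{\Omega}\subset\Omega$, so that $v_t(x):=t^{-2}v(tx)$ is a well-defined convex function on the open set $\frac1t\Omega$, which is a neighborhood of $\overline{\Omega}$. Since $\nabla^2v_t(x)=\nabla^2v(tx)$ we get $v_t\in W^{2,2}(\frac1t\Omega)$ with $\det\nabla^2v_t=1$ a.e.; moreover $v_t\to v$ in $W^{2,2}(\Omega)$ as $t\to1^-$, by the $L^2$-continuity of dilations applied to $v$, $\nabla v$ and $\nabla^2v$. It therefore suffices to prove that each $v_t$ is smooth up to $\partial\Omega$, and for this it is enough to prove interior smoothness of $v_t$ on the larger open set $\frac1t\Omega\supset\overline{\Omega}$.

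So fix $t$ and write $u=v_t$, a convex Alexandrov solution of $\det\nabla^2u=1$ on the open set $U=\frac1t\Omega$ with Monge--Amp\`ere measure $\mathrm{d}x$. Here the two-dimensional setting is essential: by Caffarelli's regularity theory, and crucially in $2$d without any convexity assumption on $U$ (which matters since $\Omega$ itself need not be convex), a convex Alexandrov solution whose Monge--Amp\`ere density lies between two positive constants is strictly convex and of class $C^{1,\alpha}_{\mathrm{loc}}$ in the interior of $U$; the right-hand side being the constant $1$, Caffarelli's $C^{2,\alpha}$ estimates together with a Schauder bootstrap then give $u\in C^\infty(U)$. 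In particular $v_t\in C^\infty(\overline{\Omega})\cap\mathcal{A}_{c_0}$, and combined with $v_t\to v$ in $W^{2,2}(\Omega)$ this gives the density.

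The technical heart is this last step. One must correctly identify $v_t$ as an Alexandrov solution whose Monge--Amp\`ere measure is exactly $\mathrm{d}x$ (so that Caffarelli's hypotheses genuinely apply), and exploit the two-dimensional structure to avoid any convexity requirement on the domain --- in dimension three or higher the corresponding statement is false, by Pogorelov's singular example. A more self-contained route would instead mollify $v$ and remove the resulting small Hessian-determinant defect by solving a linearized Monge--Amp\`ere equation for a correction term; that approach is delicate, however, because the ellipticity constants of the linearization degenerate as the mollification scale tends to zero while the defect decays only in $L^1$, so balancing the two is awkward and it ultimately relies on the same regularity input. Hence the dilation-plus-regularity argument is the cleaner path.
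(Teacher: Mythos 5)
Your dilation-plus-interior-regularity strategy is, as far as I can tell, the same one used in the cited reference [LMP-arma, Theorem 3.2] (the present paper quotes the proposition without proof): normalize $c_0=1$, fix the sign of $\nabla^2 v$, use star-shapedness with respect to $B\ni 0$ to get $v_t(x)=t^{-2}v(tx)\in W^{2,2}$ on a neighbourhood of $\bar\Omega$ with $\det\nabla^2 v_t=1$ and $v_t\to v$ in $W^{2,2}(\Omega)$, then invoke Caffarelli's interior theory, with Alexandrov's two-dimensional strict-convexity result replacing any convexity hypothesis on the domain. The one step stated too casually is the passage from ``$v$ convex, $W^{2,2}$, $\det\nabla^2 v=1$ a.e.'' to ``Alexandrov solution with Monge--Amp\`ere measure $Mv=\mathrm{d}x$'': that the singular part of $Mv$ vanishes is not automatic from $W^{2,2}$ alone and is itself a nontrivial two-dimensional fact, established in [LMP-arma, Section 6] (cited as Theorems 6.1 and 6.3 elsewhere in this paper); it deserves a reference rather than an ``i.e.''. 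With that caveat the proposal is correct and follows the same route as the source.
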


In view of the above, it is enough to prove Theorem \ref{limsup2} for
$v\in\mathcal{C}^{2,\beta}(\bar\Omega)$ satisfying $\det\nabla^2v =
c_0$. In the general case of $v\in W^{2,2}(\Omega)$
satisfying the same constraint, the result follows by a
diagonal argument.

By Theorem \ref{matching} used with $\epsilon=h^{\gamma/2}$ and
$s_\epsilon=\epsilon (S_g^2)_{2\times 2}$, there exists an equibounded sequence
$w_h\in\mathcal{C}^{2,\beta}(\bar\Omega,\mathbb{R}^3)$ such that the
deformations $u_h(x') = x'+ h^{\gamma/2} v(x') e_3 + h^{\gamma} w_h(x')$
are isometrically equivalent to the metric in: 
\begin{equation}\label{isom}
\forall 0< h \ll 1 \qquad (\nabla u_h)^T\nabla u_h =
\mbox{Id}_2+2h^\gamma (\mbox{sym } S_g)_{2\times 2} + h^{2\gamma}
(S_g^2)_{2\times 2}.
\end{equation}
Define now the recovery sequence $u^h\in
\mathcal{C}^{1,\beta}(\Omega^h,\mathbb{R}^3)$ by the formula:
\begin{equation}\label{expan3}
u^h(x', x_3) = u_h(x') + x_3 b^h(x') + \frac{x_3^2}{2}h^{\gamma/2}
\big(d^h(x') - l(B_g(x'))\big),
\end{equation}
where $l(B_g)$ is defined as in (\ref{vecl}),
the ``Cosserat'' vector fields $ b^h:\Omega\to\mathbb{R}^3$ are given by:
$$\left[\begin{array}{ccc} \partial_1 u_h & \partial_2 u_h & b^h\end{array}\right]^T
\left[\begin{array}{ccc} \partial_1 u_h & \partial_2 u_h &
    b^h\end{array}\right] = G^h(\cdot, 0) \quad \mbox{ in } \Omega,$$
and $d^h\in\mathcal{C}^{1,\beta}(\bar\Omega,\mathbb{R}^3)$ are the ``warping'' vector fields,
approximating the effective warping $d\in\mathcal{C}^{0,\beta}(\bar\Omega,\mathbb{R}^3)$:
\begin{equation}\label{warp}
\begin{split}
 h^{\gamma/2}\|d^h\|_{\mathcal{C}^{1,\beta}} &\leq C \quad \mbox{ and } \quad  
\lim_{h\to 0} \|d^h - d\|_{L^\infty} = 0,\\
\mathcal{Q}_2\big(\nabla^2 v + \mbox{sym} (B_g)_{2\times
2} \big) & = \mathcal{Q}_3\big((\nabla^2 v + \mbox{sym} (B_g)_{2\times
2})^*  + \mbox{sym}(d\otimes e_3)\big).
\end{split}
\end{equation}
Note that (\ref{expan3}) is consistent with (\ref{expan}) at the
highest order terms in the expansion in $h$.
\endproof

\section{On the uniqueness of minimizers to the Monge-Amp\`ere
  constrained energy}

In this section, we discuss the multiplicity of
minimizers to the limiting problem (\ref{linpresKirchhoff}).
Given a bounded, simply connected $\Omega\subset\mathbb{R}^2$ and a function $ f \in L^1(\Omega)$,
we consider the functional:
\begin{equation}\label{prob}
\mathcal{I}(v) = \int_\Omega |\nabla^2 v|^2~\mbox{d}x'
\quad \mbox{subject to the constraint: }
\mathcal{A}_ f =\{v\in W^{2,2}(\Omega); ~ \det \nabla^2v= f \}. 
\end{equation}
Here, we assumed that $\mathcal{Q}_2(F_{2\times 2}) = |\mbox{sym}
(F_{2\times 2})|^2$ for every $F_{2\times 2}\in\mathbb{R}^{2\times
  2}$, which is consistent with (\ref{defQ}) and
(\ref{Q3}), when $W(F) = \frac{1}{2}\mbox{dist}^2(F, SO(3))$ for $F$
close to $SO(3)$. Indeed, expanding $\mbox{dist}^2(\mbox{Id}+\epsilon A, SO(3))
= |\sqrt{(\mbox{Id}+\epsilon A)^T (\mbox{Id}+\epsilon A)} -
\mbox{Id}|^2 = \epsilon^2 |\mbox{sym } A|^2 +
\mathcal{O}(\epsilon^3)$, we see that $\mathcal{Q}_3(A) = |\mbox{sym
}A|^2$, which implies the form of $\mathcal{Q}_2$. 
This scenario corresponds to the isotropic elastic energy density with the
Lam\'e coefficients $\lambda = 0$, $\mu = \frac{1}{2}$ (see
\cite{FJMhier} for more details). 

\smallskip

We now observe that the minimization problem for (\ref{prob}) may have
multiple or unique solutions, depending on the choice of a smooth
constraint function $f$.

\begin{example}\label{ex1}
(i) Let $\Omega=B(0,1)\subset\mathbb{R}^2$. Then for $f\equiv -1$ the
problem (\ref{prob}) has a non-trivial one-parameter family of
absolute minimizers:
$\ds v_\theta (x_1,x_2) = (\cos\theta) \frac{x_1^2-x_2^2}{2} + (\sin\theta)
(x_1x_2)$. Indeed, for $v\in \mathcal{A}_{f\equiv -1}$ the quantity $|\nabla^2 v|^2 =
(\mbox{tr } \nabla^2v)^2 - 2 \det\nabla^2v = (\mbox{tr } \nabla^2v)^2
+2$ is minimized when $\mbox{tr} \nabla^2v=\Delta v=0$, that is readily
satisfied with: $\nabla^2v_\theta = \left[\begin{array}{cc} \cos\theta &
    \sin\theta \\ \sin\theta & -\cos\theta\end{array}\right]$.

(ii) On the other hand, for $f\equiv 1$, (\ref{prob}) has a unique
minimizer: $\ds v(x_1,x_2) = \frac{x_1^2+x_2^2}{2}$. This is because for
$v\in \mathcal{A}_{f\equiv 1}$ we have: $|\nabla^2 v|^2 =
(\mbox{tr } \nabla^2v)^2 - 2 = (\lambda_1 + \lambda_2)^2
-2$, where $\lambda_1, \lambda_2$ are the eigenvalues of $\nabla^2v$. 
This quantity achieves its minimum, under the constraint
$\lambda_1\lambda_2 = 1$, precisely when $\lambda_1 = \lambda_2 =1$. 
\endproof
\end{example}

\begin{example}\label{ex2}
A similar argument as in Example \ref{ex1} (i), allows for a
construction of a one-parameter family of absolute minimizers
$v_\theta$ to (\ref{prob}) when a smooth function
$f:\bar\Omega\to\mathbb{R}$ satisfies:
\begin{equation}\label{fff}
f\leq c_0<0 \quad \mbox{ and } \quad \Delta (\log |f|)=0 \quad \mbox{in
} \Omega.
\end{equation}
Indeed, define $\lambda=\sqrt{|f|}$. Clearly, the function $\lambda$ is
positive, smooth and satisfies $\Delta(\log\lambda) = 0$ in
$\bar\Omega$. Hence there exists
$\phi\in\mathcal{C}^\infty(\bar\Omega)$ such that the function
$(\log\lambda + i\phi)$ is holomorphic in
$\Omega\subset\mathbb{C}$.
Trivially,  for every $\theta\in\mathbb{R}$, the function $(\log\lambda +
i(\phi+\theta))$ is holomorphic, as is its exponential:
$$\exp(\log\lambda + i(\phi+\theta)) = \lambda\cos(\phi +\theta) +
i\lambda\sin (\phi +\theta). $$ 
Writing the associated Cauchy-Riemann equations we note that they are
precisely the vanishing of the $curl$ of the symmetric matrix field in
the left hand side of:
\begin{equation}\label{gradi} 
\left[\begin{array}{cc} \lambda\cos(\phi+\theta) &
    -\lambda\sin(\phi+\theta) \\ -\lambda\sin(\phi+\theta) &
    -\lambda\cos(\phi + \theta)\end{array}\right] = \nabla^2v_\theta. 
\end{equation}
Consequently, since $\Omega$ is simply connected, for each $\theta$
there exists a smooth $v_\theta:\bar\Omega\to\mathbb{R}$ as in
(\ref{gradi}).
We see that: 
\begin{equation}\label{gradi2}
\Delta v_\theta = 0 \quad \mbox{ and } \quad \det\nabla^2 v_\theta =
-\lambda^2 =-|f|=f,
\end{equation}
which proves the claim.

For completeness, we now prove that (\ref{fff}) is in fact
equivalent to the existence of some $v$ satisfying (\ref{gradi2}).
Denote $\lambda=\sqrt{f}$ and  let $r_1, r_2:\Omega\to\mathbb{R}^3$ be the (unit-length)
eigenvectors fields of
$\nabla^2v$ corresponding to the eigenvalues $\lambda$ and
$-\lambda$. Since $\langle r_1, r_2\rangle = 0$, we may write:
$[r_1, r_2] = R_\phi = \left[\begin{array}{cc} \cos\phi &
    -\sin\phi \\ \sin\phi & \cos\phi\end{array}\right]\in
SO(2)$, for some smooth function $\phi:\Omega\to (0, 2\pi)$. The
fact that the range of $\phi$ may be taken in $(0,2\pi)$ follows
from the simply-connectedness of $\Omega$. We obtain:
\begin{equation*}
\nabla^2v = R_\phi ~\mbox{diag}\{\lambda, -\lambda\} ~R_\phi^T = \left[\begin{array}{cc} \lambda\cos(2\phi) &
    \lambda\sin(2\phi) \\ \lambda\sin(2\phi) &
    -\lambda\cos(2 \phi)\end{array}\right] = \left[\begin{array}{cc} \lambda\cos(-2\phi) &
    -\lambda\sin(-2\phi) \\ -\lambda\sin(-2\phi) &
    -\lambda\cos(-2 \phi)\end{array}\right].
\end{equation*}
Since $curl$ of the matrix field in the right hand side above vanishes
in $\Omega$, we reason as in (\ref{gradi}) and see that the (nonzero) function $\lambda \exp(-2i\phi)$ 
satisfy the  Cauchy-Riemann equations, and hence it is holomorphic in
$\Omega\subset\mathbb{C}$. Further, its logarithm: $(\log\lambda -2
i\phi)$ is well defined and holomorphic as well. Consequently:
$\Delta (\log\lambda)=0$, which concludes the proof of (\ref{fff}).
\endproof
\end{example}

\medskip

In what follows, we want to derive conditions for uniqueness of minimizers to
(\ref{prob}). In this context, it is useful to consider the relaxed constraint:
\begin{equation*}
\mathcal{A}_ f ^*=\{v\in W^{2,2}(\Omega); ~ \det \nabla^2v\geq f \}.
\end{equation*}
We will denote by $\mathcal{I}_ f $ and $\mathcal{I}_ f ^*$ the restrictions of $I$ to
$\mathcal{A}_ f $ and $\mathcal{A}_ f ^*$, respectively.
Clearly:
$$\inf \mathcal{I}_ f ^*\leq \inf \mathcal{I}_ f .$$

The following straightforward lemma has been observed in
\cite{ho} as well:
\begin{lemma}\label{exist}
Assume that $\mathcal{A}_ f \neq \emptyset$ ($\mathcal{A}_ f ^*\neq
\emptyset$). Then $I_ f $ ($I_ f ^*$) admits a minimizer.
Moreover, there must be $ f \in L^1\log L^1(\Omega)$,
namely: 
$$\int_{\Omega'} | f \log(2+ f )| < \infty,$$ 
for every subset $\Omega'$ compactly contained in $\Omega$.
\end{lemma}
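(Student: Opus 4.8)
The plan is to prove existence by the direct method of the calculus of variations, and to deduce the $L^1\log L^1$ integrability of $f$ from the Hardy–space regularity of planar Jacobians. For existence, I would start from a minimizing sequence $(v_k)$ for $\mathcal{I}_f$ (resp.\ $\mathcal{I}_f^*$). Since $\mathcal{I}$ and the constraint sets $\mathcal{A}_f,\mathcal{A}_f^*$ are all invariant under adding an affine function to $v$, one may replace each $v_k$ by $v_k$ minus its affine part and assume $\int_\Omega v_k=0$ and $\int_\Omega\nabla v_k=0$; Poincar\'e's inequality then gives $\|v_k\|_{W^{2,2}(\Omega)}\le C\|\nabla^2 v_k\|_{L^2(\Omega)}\le C$, so a subsequence converges weakly in $W^{2,2}(\Omega)$ to some $v$. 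In two dimensions Rellich--Kondrachov yields $\nabla v_k\to\nabla v$ strongly in $L^q(\Omega,\mathbb{R}^2)$ for every $q<\infty$, while $\nabla^2 v_k\rightharpoonup\nabla^2 v$ weakly in $L^2$. Writing the null–Lagrangian identity $\det\nabla^2 w=\partial_1(\partial_1 w\,\partial_{22}w)-\partial_2(\partial_1 w\,\partial_{12}w)$ exhibits $\det\nabla^2 v_k$ as a divergence of terms, each a product of a strongly convergent and a weakly convergent factor in $L^2$ (against a bounded test–function derivative), whence $\det\nabla^2 v_k\to\det\nabla^2 v$ in $\mathcal{D}'(\Omega)$. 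Thus $\det\nabla^2 v=f$ in the $\mathcal{A}_f$ case; in the $\mathcal{A}_f^*$ case, $\det\nabla^2 v-f$ is a distributional limit of the nonnegative distributions $\det\nabla^2 v_k-f$, hence a nonnegative distribution, and being an $L^1$ function (recall $|\det\nabla^2 v|\le\tfrac12|\nabla^2 v|^2$) it satisfies $\det\nabla^2 v\ge f$ a.e. So $v$ is admissible, and since $\mathcal{I}$ is convex and strongly continuous on $W^{2,2}$, hence weakly lower semicontinuous, $\mathcal{I}(v)\le\liminf_k\mathcal{I}(v_k)$ equals the infimum, so $v$ is a minimizer.

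For the integrability, note that $u:=\nabla v\in W^{1,2}_{loc}(\Omega,\mathbb{R}^2)$ has Jacobian $\det\nabla u=\det\nabla^2 v=f$. The div--curl estimate of Coifman--Lions--Meyer--Semmes for Jacobians in the critical Sobolev space $W^{1,n}$ with $n=2$, applied after multiplication by a cutoff, places $\det\nabla u$ in the local Hardy space $\mathcal{H}^1_{loc}(\Omega)$, with $\|\det\nabla u\|_{\mathcal{H}^1(\Omega')}\le C(\Omega',\Omega)\,\|\nabla u\|_{L^2(\Omega)}^2$ for $\Omega'\Subset\Omega$. When $f\ge 0$, Stein's characterization of nonnegative $\mathcal{H}^1$ functions (equivalently, M\"uller's higher-integrability theorem for $W^{1,n}$ maps with nonnegative Jacobian) upgrades this to $\int_{\Omega'} f\log(2+f)<\infty$ for every $\Omega'\Subset\Omega$; when $f\le 0$ one argues in the same way with the map $x'\mapsto(\partial_1 v(x'),-\partial_2 v(x'))\in W^{1,2}_{loc}(\Omega,\mathbb{R}^2)$, whose Jacobian is $-f\ge 0$. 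This settles the sign–definite case, in particular the positive case $f\ge c>0$ relevant to the rest of the section, and the general sign–changing case follows along the same lines (cf.\ \cite{ho}). The same reasoning applies verbatim to $\mathcal{A}_f^*$, with $\det\nabla^2 v$ in place of $f$.

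I expect the main obstacle to be the passage to the limit in the nonlinear (and, for the relaxed problem, one–sided) constraint: $v\mapsto\det\nabla^2 v$ is not weakly continuous for general maps, and here it works only because $\det\nabla^2$ is a null Lagrangian and, in two dimensions, $W^{2,2}$ embeds compactly into $W^{1,q}$, so that one factor of the bilinear expression $\nabla v\cdot\nabla^2 v$ converges strongly. The integrability statement then hinges on the borderline case $W^{1,2}$, $n=2$ of the higher-integrability theory for Jacobians, where the sign of the determinant is precisely what separates mere membership in $\mathcal{H}^1$ from the sharper $L^1\log L^1$ bound.
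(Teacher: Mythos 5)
Your proposal is correct and follows essentially the same route as the paper: the direct method with a Poincar\'e normalization for compactness, weak lower semicontinuity for the energy, the null-Lagrangian structure of $\det\nabla^2 v$ combined with the compact embedding $W^{2,2}\hookrightarrow W^{1,q}$ (for all $q<\infty$) to pass to the limit in the constraint, and M\"uller's higher-integrability theorem for the $L^1\log L^1$ claim. The only cosmetic difference is that the paper expresses the distributional continuity via $\det\nabla^2 v = -\tfrac12\,\mathrm{curl}^T\mathrm{curl}(\nabla v\otimes\nabla v)$ and strong $L^2$ convergence of $\nabla v_k\otimes\nabla v_k$, while you use the equivalent divergence-form identity $\det\nabla^2 w=\partial_1(\partial_1 w\,\partial_{22}w)-\partial_2(\partial_1 w\,\partial_{12}w)$; note also that your closing remark about the ``general sign-changing case'' is unjustified as stated (Hardy-space membership does not upgrade to $L\log L$ without a sign), but the paper's own citation of M\"uller carries the same restriction, and the lemma is only invoked later under $f\geq c>0$.
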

\begin{proof}
Take a minimizing sequence $v_n\in\mathcal{A}_ f $; it satisfies:
$\|\nabla^2v_n\|_{L^2(\Omega)}\leq C$. By modifying $v_n $ by $\fint v$ and $(\fint
\nabla v)x$, in view of the Poincare inequality it follows that:
$\|v_n\|_{W^{2,2}(\Omega)}\leq C$. Therefore $v_n\rightharpoonup v$
weakly in $W^{2,2}(\Omega)$ (up to a subsequence),
which implies $\mathcal{I}(v)\leq\liminf \mathcal{I}(v_n)$. We hence see that $v$ is a
minimizer of $\mathcal{I}_ f $ ($\mathcal{I}_ f ^*$) if only $v$ satisfies the
appropriate constraint.

Since $\nabla v_n \rightharpoonup \nabla v$ weakly in
$W^{1,2}(\Omega)$, then the same convergence is also valid strongly in
any $L^p(\Omega)$ for $p\in[1, \infty)$, and so $\nabla v_n \otimes
\nabla v_n \rightarrow \nabla v\otimes \nabla v$ strongly in
$L^2(\Omega)$. Applying $\mbox{curl}^T\mbox{curl}$, this yields the following
convergence, in the sense of distributions:
$$\det\nabla^2 v_n = -\mbox{curl}^T\mbox{curl} (\nabla v_n \otimes
\nabla v_n) \rightarrow -\mbox{curl}^T\mbox{curl} (\nabla v \otimes
\nabla v) = \det\nabla^2 v.$$ 
Consequently, if $v_n\in\mathcal{A}_ f $ then $v\in\mathcal{A}_ f $ as
well (likewise,  if $v_n\in\mathcal{A}_f^*$ then $v\in\mathcal{A}_ f ^*$).  

The final assertion follows from the celebrated result in
\cite{muller}: If $v\in W^{1,2}(\Omega,\mathbb{R}^n)$ on
$\Omega\subset\mathbb{R}^n$ satisfies
$\det\nabla v\geq 0$ then $\det\nabla v\in L^1\log L^1(\Omega)$.
\end{proof}

\begin{lemma}\label{unique}
Assume that $ f \geq c>0$ in $\Omega$. Let $v_1,
v_2\in\mathcal{A}_ f ^*$ be two minimizers of $\mathcal{I}_ f ^*$. Then
$\nabla^2 v_1 = \nabla^2 v_2$, i.e. $v_1-v_2$ is an affine function.
In particular, the function: 
$$\psi[ f ] = \det\nabla^2 (\mathrm{argmin}~\mathcal{I}_{ f }^*) = \det\nabla^2 v_1$$ 
is well defined and it satisfies: $\psi[ f ]\geq  f $ and
$\psi[ f ]\in L^1\log L^1(\Omega)$.
\end{lemma}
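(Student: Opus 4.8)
The plan is to combine two convexity facts: the integrand $F\mapsto|F|^2$ is strictly convex on $\mathbb{R}^{2\times 2}_{\mathrm{sym}}$, whereas $F\mapsto(\det F)^{1/2}$ is concave and positively homogeneous of degree one on the cone of positive semidefinite symmetric $2\times 2$ matrices (Minkowski's determinant inequality in dimension two). Applied to the arithmetic mean of two minimizers, these two facts will force the two Hessians to coincide — but only after one knows that the Hessian of an admissible competitor keeps a fixed sign on the connected domain $\Omega$.

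The first step, which I expect to be the \emph{main obstacle}, is therefore the structural claim: if $w\in W^{2,2}(\Omega)$ with $\Omega$ connected and $\det\nabla^2 w\ge c>0$ a.e., then $\nabla^2 w$ is positive definite a.e.\ on $\Omega$, or negative definite a.e.\ on $\Omega$. Since $\det\nabla^2 w>0$ a.e.\ one has $\mathrm{tr}\,\nabla^2 w\neq 0$ a.e., so the measurable sets $\Omega_{\pm}=\{\pm\Delta w>0\}$ cover $\Omega$ up to a null set and on each of them $\nabla^2 w$ is definite of the indicated sign; the content is to show that one of them is null. I would argue this by mollification — $\nabla^2 w_\delta\to\nabla^2 w$ in $L^2_{loc}$, hence $\det\nabla^2 w_\delta\to\det\nabla^2 w$ in $L^1_{loc}$ — together with the fact that a planar Sobolev map with a.e.\ positive Jacobian (here $\nabla w\in W^{1,2}(\Omega,\mathbb{R}^2)$) cannot reverse the sign of its Jacobian across a connected domain, which rests on degree theory for $W^{1,2}$ maps in two dimensions. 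I would also flag that, should one wish to bypass this point, the conclusion of the lemma only needs to be read modulo the global symmetry $w\mapsto -w$, which changes nothing below since $\det\nabla^2(-w)=\det\nabla^2 w$.

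Granting the claim, let $v_1,v_2$ be two minimizers of $\mathcal{I}_f^*$. Because $\det\nabla^2(-v_i)=\det\nabla^2 v_i$ and $\mathcal{I}(-v_i)=\mathcal{I}(v_i)$, after possibly replacing $v_2$ by $-v_2$ I may assume $\nabla^2 v_1$ and $\nabla^2 v_2$ are both positive definite a.e.; set $v=\tfrac12(v_1+v_2)$. By concavity and homogeneity of $(\det\,\cdot\,)^{1/2}$ on positive semidefinite matrices, $(\det\nabla^2 v)^{1/2}\ge\tfrac12(\det\nabla^2 v_1)^{1/2}+\tfrac12(\det\nabla^2 v_2)^{1/2}\ge f^{1/2}$ a.e., hence $v\in\mathcal{A}_f^*$. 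On the other hand, the polarization identity $|\tfrac12(A+B)|^2=\tfrac12|A|^2+\tfrac12|B|^2-\tfrac14|A-B|^2$ yields $\mathcal{I}(v)=\inf\mathcal{I}_f^*-\tfrac14\int_\Omega|\nabla^2 v_1-\nabla^2 v_2|^2$, and since $v$ is admissible, $\mathcal{I}(v)\ge\inf\mathcal{I}_f^*$; therefore $\nabla^2 v_1=\nabla^2 v_2$ a.e., i.e.\ $v_1-v_2$ is affine.

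The remaining assertions on $\psi[f]$ then follow at once. Well-definedness holds because $\det\nabla^2 v$ is the same for every minimizer $v$. Any minimizer lies in $\mathcal{A}_f^*$, so $\psi[f]=\det\nabla^2 v\ge f$ a.e., and in particular $\psi[f]\ge c>0$; applying the result of \cite{muller}, exactly as in the proof of Lemma \ref{exist}, to the map $\nabla v\in W^{1,2}(\Omega,\mathbb{R}^2)$ whose distributional Jacobian equals $\det\nabla^2 v\ge 0$, we conclude $\psi[f]\in L^1\log L^1(\Omega)$.
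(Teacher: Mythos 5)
Your skeleton is the same as the paper's: reduce to the case where both Hessians are positive definite (up to replacing $v_i$ by $-v_i$), use the two-dimensional Minkowski/Brunn--Minkowski inequality for $(\det)^{1/2}$ to show the midpoint $\tfrac12(v_1+v_2)$ is admissible, and then use strict convexity of the $L^2$ norm (your polarization identity is a clean quantitative version of what the paper says in words). The final remarks about $\psi[f]$ also match the paper. So the issue is entirely in the step you correctly flagged as the ``main obstacle.''

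There you have a genuine gap. You assert that a $W^{2,2}$ function $w$ on a connected $\Omega$ with $\det\nabla^2 w\ge c>0$ a.e.\ has a Hessian of fixed sign, and try to prove it by mollification plus degree theory. Neither piece works as stated. Mollification gives $\det\nabla^2 w_\delta\to\det\nabla^2 w$ in $L^1_{\mathrm{loc}}$, but $L^1$ convergence does not carry the pointwise lower bound $\det\nabla^2 w_\delta\ge c/2$ on compacta, so you cannot apply the smooth-case connectedness-of-eigenvalues argument to $w_\delta$. And the degree-theoretic statement you invoke --- that a planar $W^{1,2}$ map with a.e.\ positive Jacobian cannot reverse the sign of its Jacobian --- is tautological in this setting: the map $\nabla w$ has positive Jacobian $\det\nabla^2 w>0$ a.e.\ in \emph{both} the locally-convex and locally-concave cases, so degree theory does not distinguish $\Omega_+$ from $\Omega_-$. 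Your ``bypass'' via the symmetry $w\mapsto -w$ does not help either: that symmetry lets you flip a globally positive-definite competitor to a globally negative-definite one, but the danger you must rule out is a single competitor whose Hessian is positive definite on part of $\Omega$ and negative definite on another part, which a global sign flip cannot fix. The paper deals with this by citing a nontrivial regularity/convexity theorem for $W^{2,2}$ solutions of $\det\nabla^2 v\ge c>0$ (Theorem 6.1 of \cite{LMP-arma}, a descendant of the $W^{2,2}$ developability/convexity theory of Pakzad and \v{S}ver\'ak), which is exactly the statement you need. Citing that result --- rather than re-proving it by an ad hoc mollification argument --- is the correct move here; the rest of your proof then closes.
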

\begin{proof}
By  \cite[Theorem 6.1]{LMP-arma}, without loss of generality (possibly replacing $v_i$ by $-v_i$) 
we may assume that  $\nabla^2 v_1$ and
$\nabla^2 v_2$ are strictly positive definite a.e. in the domain. 
For $\lambda\in [0,1]$, consider $v_\lambda=\lambda v_1 + (1-\lambda)
v_2$. We claim that $v_\lambda\in\mathcal{A}_ f ^*$. This follows by
the Brunn-Minkowski inequality:
$$(\det\nabla^2 v_\lambda)^{1/2} \geq \lambda (\det\nabla^2 v_1)^{1/2}
+ (1-\lambda) (\det\nabla^2 v_2)^{1/2} \geq \lambda \sqrt{ f } +
(1-\lambda) \sqrt{f} = \sqrt{f}.$$
Also: $\mathcal{I}(v_\lambda)\leq \lambda \mathcal{I}(v_1) + (1-\lambda)\mathcal{I}(v_2) = \min
\mathcal{I}_ f ^*$, and so this inequality is in fact an equality. Since the
$L^2$ norm is a strictly convex function, we conclude that
$\nabla^2v_1 = \nabla^2 v_2$.
\end{proof}

\begin{remark}
Consider the related functional $I_\Delta(v) = \int_\Omega |\Delta v|^2$,
constrained to $\mathcal{A}_ f $ or $\mathcal{A}_ f ^*$, which we
respectively denote by $I_{\Delta,  f }$ and $I_{\Delta,  f }^*$.
Since $|\nabla^2 v|^2 = |\Delta v|^2 - 2\det\nabla^2
v$, any minimizing sequence $v_n$ of $I_{\Delta,  f }$ or
$I_{\Delta,  f }^*$, satisfies $\|\nabla^2v_n\|_{L^2(\Omega)}\leq C.$
Arguing as in the proof of Lemma \ref{exist} we obtain existence of
minimizers to both problems. On the other hand, there is no uniqueness
as in Lemma \ref{unique}, in the sense that two minimizers of
$I_{\Delta,  f }^*$ may differ by a non-affine harmonic function.
We now observe that if $\min \mathcal{I}_{ f } = \min \mathcal{I}_{ f }^*$, then $\min
I_{\Delta, f } = \min I_{\Delta,  f }^*$. Indeed, let $v_0\in
\mathcal{A}_ f $ be the common minimizer of $\mathcal{I}_{ f }$ and $\mathcal{I}_ f ^*$. Then:
$$\forall v\in\mathcal{A}_ f ^*\quad \mathcal{I}_{\Delta}(v) = I(v) + 2\int_\Omega
\det \nabla^2 v \geq \mathcal{I}(v_0) + 2\int_\Omega  f  = I_\Delta (v_0),$$
hence $v_0$ is also the common minimizer of $I_{\Delta,  f }$ and
$I_{\Delta,  f }^*$.
%NEED EXAMPLE THAT THE CONVERSE DOES NOT FOLLOW.
\end{remark}

\section{On the uniqueness of minimizers: the radially symmetric case}

In this section we assume that $\Omega = B(0,1)\subset\mathbb{R}^2$
and that: 
$$f  = f (r)\geq c > 0$$ 
is a radial function such that $ f \in L^1(\Omega)$, i.e.:
$\int_0^1 r f (r)~\mbox{d}r <\infty.$

\begin{lemma}\label{radial}
If a radial function $v=v(r)\in W^{2,2}(\Omega)$
satisfies $\det\nabla^2 v =  f $, then:
\begin{equation*}
|v'(r)|^2 = \int_0^r 2s f (s)~\mathrm{d}s.
\end{equation*}
In particular, there exists at most one (up to a constant) radial
function $v=v_ f $ as above.
\end{lemma}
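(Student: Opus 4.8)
The plan is to pass to the radial profile and reduce the Monge--Amp\`ere constraint to a first-order ODE. Writing $v(x)=v(r)$ with $r=|x|$, a direct computation gives
\[
\nabla^2 v = v''(r)\,\hat x\otimes\hat x + \frac{v'(r)}{r}\big(\mathrm{Id}_2-\hat x\otimes\hat x\big),\qquad \hat x=x/|x|,
\]
so the eigenvalues of $\nabla^2 v$ are $v''(r)$ and $v'(r)/r$, and the constraint $\det\nabla^2 v=f$ becomes $v'(r)\,v''(r)=r\,f(r)$ for a.e.\ $r\in(0,1)$. Since $v\in W^{2,2}(B(0,1))$ is radial, its profile lies in $W^{2,2}_{\mathrm{loc}}((0,1))$, so $v'$ is locally absolutely continuous on $(0,1)$; moreover, using $|\nabla^2 v|^2=(v'')^2+(v'/r)^2$, the finiteness of $\int_{B(0,1)}|\nabla^2 v|^2$ is equivalent to $\int_0^1\big[(v'')^2 r + (v')^2/r\big]\,\mathrm{d}r<\infty$, and I will use the bound $\int_0^1 (v'(r))^2/r\,\mathrm{d}r<\infty$ to control $v$ near the origin.

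Next I would integrate the ODE. Set $g(r)=\frac12 (v'(r))^2$; then $g$ is locally absolutely continuous on $(0,1)$ with $g'(r)=v'(r)v''(r)=r f(r)$ a.e. Since $f\in L^1(\Omega)$ means precisely $\int_0^1 r f(r)\,\mathrm{d}r<\infty$, we have $g'\in L^1(0,1)$, so $g$ extends absolutely continuously to $[0,1]$ and, for every $r\in(0,1)$,
\[
(v'(r))^2 = 2\,g(0^+) + \int_0^r 2s\, f(s)\,\mathrm{d}s.
\]
It remains to check $g(0^+)=0$. If instead $g(0^+)=a>0$, then $(v'(r))^2\geq 2a$ for all sufficiently small $r>0$, whence $\int_0^1 (v'(r))^2/r\,\mathrm{d}r\geq 2a\int_0^{r_0}\mathrm{d}r/r=+\infty$, contradicting the bound recalled above. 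Therefore $g(0^+)=0$ and $|v'(r)|^2=\int_0^r 2s f(s)\,\mathrm{d}s$, which is the claimed identity.

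Finally, the uniqueness statement follows from this identity: since $f\geq c>0$, the right-hand side is strictly positive for $r>0$ (indeed $\geq c r^2$), so $v'$ is continuous and nowhere zero on $(0,1]$ and hence of constant sign; thus $v'(r)=\pm\big(\int_0^r 2s f(s)\,\mathrm{d}s\big)^{1/2}$, and $v$ is determined up to a global sign and an additive constant (the only radial affine functions being the constants). I expect the one genuinely delicate point to be the behaviour at the origin: justifying that $v'$ is locally absolutely continuous so that the fundamental theorem of calculus applies to $g$, that $v'v''$ agrees a.e.\ with $r f\in L^1$, and that the Dirichlet-type energy bound on $\nabla^2 v$ excludes $g(0^+)>0$; once these are in place the remaining steps are routine.
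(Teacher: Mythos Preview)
Your argument is correct and essentially identical to the paper's: both compute the radial Hessian, reduce $\det\nabla^2 v=f$ to $(|v'|^2)'=2rf$, integrate, and then use the $W^{2,2}$ bound $\int_0^1 (v')^2/r\,\mathrm{d}r<\infty$ to kill the integration constant. The only cosmetic difference is that the paper extracts this last bound from $\Delta v\in L^2$ and subtracts the cross term $2v'v''/r=2f\in L^1$, whereas you read it off directly from $|\nabla^2 v|^2=(v'')^2+(v'/r)^2$; your route is arguably cleaner, and your explicit uniqueness discussion (constant sign of $v'$) fills in a point the paper leaves implicit.
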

\begin{proof}
Let $v=v(r)$ be as in the statement of the Lemma.
Recall that writing $\partial_r v = v'$, the gradient of $v$ in polar
coordinates has the form: $\nabla v (r,\theta)=
(v'(r) \cos\theta, v'(r) \sin\theta)^T$. We now check
directly that: 
$$\det\nabla^2 v = \frac{1}{r}v' v'' = \frac{1}{2r}\left(|v'|^2\right)'.$$
Hence, there must be: 
\begin{equation}\label{calc}
|v'(r)|^2 = \int_0^r 2s f (s)~\mbox{d}s + C,
\end{equation}
for some $C \geq 0$.
Since $v\in W^{2,2}(\Omega)$, we get: $\Delta v = v'' + \frac{1}{r}
v'\in L^2(\Omega)$, or equivalently:
$$\int_{\Omega} |v''|^2 + \frac{1}{r^2} |v'|^2 + \frac{2}{r} v' v'' <\infty.$$
Note that the last term above equals $2 f \in L^1(\Omega)$, and thus
$\frac{1}{r^2}|v'|^2\in L^1(\Omega)$.
By (\ref{calc}) we conclude:
$$\int_0^1 \frac{2\pi C}{r} <2\pi \int_0^1\frac{1}{r}|v'(r)|^2~\mbox{d}r =
\int_\Omega \frac{1}{r^2}|v'|^2 <\infty,$$
and so there must be $C=0$.
\end{proof}

\begin{corollary}\label{condi}
A necessary and sufficient condition for existence of a radial function $v=v(r)\in W^{2,2}(\Omega)$
solving $\det\nabla^2 v =  f $ is:
\begin{equation}\label{condi2} 
\int_0^1 r|\log r| f (r)~\mathrm{d}r < \infty ~~\mbox{ and } ~~
\int_0^1\frac{r^3 f (r)^2}{\int_0^r s f (s)\mbox{d}s}~\mathrm{d}r < \infty. 
\end{equation}
The solution $v_ f $ is then given by (uniquely, up to a constant):
\begin{equation}\label{minimizer}
v_ f (r) = \int_0^r
\left(\int_0^s2t f (t)~\mathrm{d}t\right)^{1/2}~\mathrm{d}s.
\end{equation}
In particular, (\ref{condi2}) is satisfied when $ f \in L^2(\Omega)$,
and consequently $\mathcal{A}_f\neq \emptyset$.
\end{corollary}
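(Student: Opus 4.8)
The plan is to reduce everything to the single candidate $v_f$ of (\ref{minimizer}). By Lemma \ref{radial}, any radial $v\in W^{2,2}(\Omega)$ with $\det\nabla^2 v=f$ satisfies $|v'(r)|^2=\int_0^r 2sf(s)\,ds$; since $f\ge c>0$ this right-hand side is positive and continuous on $(0,1]$, so $v$ is forced (up to a sign) to have $v'(r)=\big(\int_0^r 2sf(s)\,ds\big)^{1/2}$ and hence coincides with $v_f$ up to an additive constant and a sign, both of which leave $\det\nabla^2$ unchanged. Thus a radial solution exists precisely when $v_f\in W^{2,2}(\Omega)$, which also yields the asserted uniqueness. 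First I would check that $v_f$ actually solves the constraint: differentiating (\ref{minimizer}) gives $v_f'(r)=\big(\int_0^r 2tf(t)\,dt\big)^{1/2}$ and $v_f''(r)=rf(r)\big(\int_0^r 2tf(t)\,dt\big)^{-1/2}$, so the identity $\det\nabla^2 v=\frac1r v'v''=\frac{1}{2r}(|v'|^2)'$ from the proof of Lemma \ref{radial} yields $\det\nabla^2 v_f=f$ a.e.

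Next I would translate $v_f\in W^{2,2}(\Omega)$ into the two integral conditions of (\ref{condi2}). Because $f\in L^1(\Omega)$, $v_f'$ is bounded on $[0,1]$, so $v_f\in W^{1,\infty}(\Omega)$ automatically and only $\nabla^2 v_f\in L^2(\Omega)$ is in question. The Hessian of a radial function has eigenvalues $v_f''(r)$ and $v_f'(r)/r$, hence
$$
\int_\Omega|\nabla^2 v_f|^2=2\pi\int_0^1\Big(|v_f''(r)|^2+\frac{|v_f'(r)|^2}{r^2}\Big)r\,dr .
$$
Inserting the formulas above, the first summand contributes $2\pi\int_0^1\frac{r^3 f(r)^2}{\int_0^r 2tf(t)\,dt}\,dr$, which is finite exactly under the second condition in (\ref{condi2}); for the second summand Tonelli's theorem gives
$$
\int_0^1\frac1r\Big(\int_0^r 2tf(t)\,dt\Big)dr=\int_0^1 2tf(t)\Big(\int_t^1\frac{dr}{r}\Big)dt=2\int_0^1 t\,|\log t|\,f(t)\,dt ,
$$
which is finite exactly under the first condition in (\ref{condi2}). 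This proves the stated equivalence together with the formula (\ref{minimizer}).

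For the final assertion I would show that $f\in L^2(\Omega)$, i.e. $\int_0^1 rf(r)^2\,dr<\infty$, implies (\ref{condi2}). For the first condition, Cauchy--Schwarz gives $\int_0^1 r|\log r|f(r)\,dr\le\big(\int_0^1 r|\log r|^2\,dr\big)^{1/2}\big(\int_0^1 rf(r)^2\,dr\big)^{1/2}<\infty$. For the second condition, $f\ge c>0$ forces $\int_0^r 2tf(t)\,dt\ge cr^2$, so $\frac{r^3 f(r)^2}{\int_0^r 2tf(t)\,dt}\le\frac1c\,rf(r)^2$ and the integral is finite. Then $v_f\in\mathcal{A}_f$, so $\mathcal{A}_f\neq\emptyset$.

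The argument is largely routine; the point needing care is the reduction of the abstract condition $v_f\in W^{2,2}(\Omega)$ to the two radial integrals --- in particular the Tonelli interchange producing the logarithmic weight $r|\log r|$, and the observation that the hypothesis $f\ge c>0$ is exactly what makes $\int_0^r 2tf(t)\,dt$ comparable with $r^2$, so that the $L^2$ bound controls the second integral in (\ref{condi2}).
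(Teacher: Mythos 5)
Your proposal is correct and follows the same route as the paper: reduce to the single candidate $v_f$ via Lemma \ref{radial}, compute $\|\nabla^2 v_f\|_{L^2(\Omega)}^2$ in polar coordinates to obtain the two integrals in (\ref{condi2}), and handle the $f\in L^2$ case by Cauchy--Schwarz and the lower bound $\int_0^r 2tf\,dt\ge cr^2$. You merely make two steps explicit that the paper leaves implicit --- the verification that $v_f$ actually satisfies $\det\nabla^2 v_f = f$, and the Tonelli interchange producing the $r|\log r|$ weight --- which is a reasonable elaboration rather than a different argument.
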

\begin{proof}
By Lemma \ref{radial} it follows that the solution $v$ is given by
$v_ f $ in (\ref{minimizer}).
Clearly $\nabla v_ f \in \mathcal{C}^1(\bar\Omega)$, so it remains to check when 
$\nabla^2 v_ f \in L^2(\Omega)$. We compute:
\begin{equation}\label{hes}
\begin{split}
\int_\Omega |\nabla^2 v_ f |^2 & = \int_\Omega |v_f''|^2 + \frac{1}{r^2}
|v_f'|^2 = 2\pi \int_0^1 r|v_f''|^2 + \frac{|v_f'|^2}{r}~\mbox{d}r
\\ & = 2\pi \int_0^1\frac{r^3 f (r)^2}{\int_0^r 2s f (s)\mbox{d}s}~\mbox{d}r +
2\pi \int_0^1 2r|\log r| f (r)~\mbox{d}r,
\end{split}
\end{equation}
proving the first claim.
When $ f \in L^2(\Omega)$, then $\int_0^1
r f ^2(r)~\mbox{d}r<\infty$, and so:
\begin{equation*}
\begin{split}
&\int_0^1 r|\log r| f (r)~\mbox{d}r \leq \big(\int_0^1 r|\log r|^2
\big)^{1/2} \big(\int_0^1 r  f ^2 \big)^{1/2} <\infty
\\ &\int_0^1\frac{r^3 f (r)^2}{\int_0^r s f (s)\mbox{d}s}~\mbox{d}r
\leq \int_0^1\frac{r^3 f (r)^2}{\int_0^r cs\mbox{d}s} ~\mbox{d}r
\leq \int_0^1 r f ^2 < \infty
\end{split}
\end{equation*}
which concludes the proof.
\end{proof}

\begin{lemma}\label{radial2}
(i) Assume that $\mathcal{A}_ f ^*\neq\emptyset$. Then the unique (up to
an affine map) minimizer of $\mathcal{I}_ f ^*$ is radially symmetric,
given by $v_{\psi[ f ]}$ where $\psi[ f ]$ satisfies (\ref{condi2}).

(ii) Assume that $\mathcal{I}_ f $ has the unique (up to an affine map) minimizer. 
Then, it is radially symmetric 
and hence given by $v_ f $ in (\ref{minimizer}). Also, 
$ f $ satisfies conditions (\ref{condi2}).
\end{lemma}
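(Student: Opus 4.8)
The plan is to exploit the rotational symmetry of the data --- the ball $\Omega=B(0,1)$, the energy $\mathcal{I}$, and the radial function $f$ --- together with the uniqueness of minimizers (established in Lemma~\ref{unique} for part (i), assumed for part (ii)), to force every minimizer to equal, up to an affine map, a radial function; then Lemma~\ref{radial} and Corollary~\ref{condi} deliver the explicit formula and the integrability conditions (\ref{condi2}). The first, routine, step records the invariance: for $R\in SO(2)$ the chain rule gives $\nabla^2(v\circ R)(x)=R^T(\nabla^2v)(Rx)R$, so that $|\nabla^2(v\circ R)(x)|=|(\nabla^2v)(Rx)|$ and $\det\nabla^2(v\circ R)(x)=\det(\nabla^2v)(Rx)$; since $\Omega$ is rotation invariant and $f=f(|\cdot|)$, a change of variables shows that $v\mapsto v\circ R$ preserves $\mathcal{I}$ and maps each of $\mathcal{A}_f$, $\mathcal{A}_f^*$ into itself, so $v\circ R$ is a minimizer of $\mathcal{I}_f$ (resp.\ $\mathcal{I}_f^*$) whenever $v$ is.

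The crux is the symmetrization. In case (i) a minimizer $v$ of $\mathcal{I}_f^*$ exists by Lemma~\ref{exist} and is unique up to an affine map by Lemma~\ref{unique} (using $f\ge c>0$); in case (ii) uniqueness up to an affine map is the hypothesis. In either case the previous step and uniqueness force $v\circ R-v$ to be affine for every $R\in SO(2)$. I would then average over the Haar probability measure $\mu$ on $SO(2)$ and set $\bar v(x)=\int_{SO(2)}v(Rx)\,d\mu(R)$: this is radial by invariance of $\mu$, while $\bar v-v=\int_{SO(2)}(v\circ R-v)\,d\mu(R)$ is an average of affine functions and hence itself affine. Consequently $\bar v\in W^{2,2}(\Omega)$ with $\nabla^2\bar v=\nabla^2v$ a.e., so $\bar v$ lies in the same constraint set, has the same energy, and is therefore a radial minimizer; the minimizer thus equals $\bar v$ up to an affine map.

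It remains to identify $\bar v$. In case (i), by the definition in Lemma~\ref{unique} we have $\psi[f]=\det\nabla^2\bar v$, which is radial (as $\bar v$ is), satisfies $\psi[f]\ge f\ge c>0$, and lies in $L^1\log L^1(\Omega)\subset L^1(\Omega)$; since $\bar v$ is a radial $W^{2,2}$ solution of $\det\nabla^2\bar v=\psi[f]$, Lemma~\ref{radial} gives $\bar v=v_{\psi[f]}$ up to a constant, and the necessity direction of Corollary~\ref{condi} applied to $\psi[f]$ yields (\ref{condi2}) for $\psi[f]$. In case (ii), $\bar v$ is a radial $W^{2,2}$ solution of $\det\nabla^2\bar v=f$, so Lemma~\ref{radial} gives $\bar v=v_f$ from (\ref{minimizer}) up to a constant, and Corollary~\ref{condi} forces $f$ to satisfy (\ref{condi2}). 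I expect the symmetrization to be the only real obstacle; it goes through cleanly precisely because $v\circ R-v$ is \emph{genuinely affine} rather than merely energy-equivalent, so that $\bar v$ has the \emph{same} Hessian as $v$ and automatically inherits both the determinant constraint and the value of the energy, with no convexity of the constraint $\det\nabla^2(\cdot)\ge f$ required.
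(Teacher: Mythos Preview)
Your proof is correct and follows essentially the same strategy as the paper: use rotation invariance of $\Omega$, $f$, and $\mathcal{I}$ together with uniqueness up to affine maps to force the minimizer to be radial, then apply Lemma~\ref{radial} and Corollary~\ref{condi}. The only difference is cosmetic: the paper normalizes $v$ by $v(0)=0$ and $\fint\nabla v=0$ so that uniqueness gives $v\circ R_\theta=v$ on the nose, whereas you average over $SO(2)$ to produce the radial representative $\bar v$ with $\nabla^2\bar v=\nabla^2 v$; both routes arrive at the same conclusion.
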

\begin{proof}
We will prove (ii). The proof of (i) relies on Lemma \ref{exist} and
Lemma \ref{unique} and the same argument as below.

Let $v\in W^{2,2}(\Omega)$ be a minimizer of $\mathcal{I}_ f $, which we
modify (if needed) so that: $v(0) = 0$ and $\fint\nabla v=0$.
For any $\theta\in [0, 2\pi)$ let $R_\theta=\left[\begin{array}{cc}
    \cos\theta & -\sin\theta\\ \sin\theta & \cos
    \theta\end{array}\right]$ be the planar rotation by
angle $\theta$. Note that $\nabla^2(v\circ R_\theta) = R_\theta^T
\big( (\nabla^2 v)\circ R_\theta\big) R_\theta$, so $\det\nabla^2
(v\circ R_\theta) = (\det \nabla^2v)\circ R_\theta$. 
In view of radial symmetry of $ f $, if follows that $v\circ
R_\theta\in \mathcal{A}_ f ^*$ and $\mathcal{I}(v\circ R_\theta) = \mathcal{I}(v)$.
Therefore, by uniqueness, $v=v\circ R_\theta$ is radially
symmetric and so the result follows from Corollary \ref{condi}.
\end{proof}

\begin{theorem}\label{decrease}
Assume that $\mathcal{A}_ f ^*\neq\emptyset $, and that $ f $ is a.e. nonincreasing, i.e.:
\begin{equation}\label{dec} 
\forall a.e.~ r\in [0,1] \quad\forall a.e.~ x\in [0,r]\qquad
 f (r)\leq  f (x).
\end{equation}
Then both problems $\mathcal{I}_ f $ and $\mathcal{I}_ f ^*$ have a unique (up to an
affine map) minimizer. The minimizer is common to both problems,
necessarily radially symmetric and given by $v_ f $ in (\ref{minimizer}).
\end{theorem}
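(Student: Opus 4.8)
The plan is to combine what has already been established --- by Lemma \ref{exist}, Lemma \ref{unique} and Lemma \ref{radial2}(i), the functional $\mathcal{I}_f^*$ has a minimizer, unique up to an affine map, which is radial and equals $v_{\psi[f]}$ with $\psi[f]\ge f$ --- with a convexity argument forcing $\psi[f]=f$. First I would reduce the minimization over radial competitors to a one-dimensional problem. For radial $v=v(r)\in\mathcal{A}_f^*$ one has $\det\nabla^2 v=\frac1r v'v''\ge f$; setting $P(r)=|v'(r)|^2$ and using that $v\in W^{2,2}$ forces $\int_0^1|v'|^2/r\,dr<\infty$, hence $v'(0)=0$ (as in the proof of Lemma \ref{radial}), one gets $P(0)=0$, $P'=2r\det\nabla^2 v\ge 2rf$ a.e., and
\[
\mathcal{I}(v)=2\pi\int_0^1\Big(r|v''|^2+\frac{|v'|^2}{r}\Big)\,dr
=2\pi\int_0^1\Big(\frac{r(P')^2}{4P}+\frac{P}{r}\Big)\,dr=:2\pi J(P).
\]
Conversely every $P$ with $P(0)=0$, $P'\ge 2rf$ a.e.\ and $J(P)<\infty$ arises this way, and $P=P_f:=\int_0^r 2sf(s)\,ds$ corresponds exactly to $v_f$ in (\ref{minimizer}). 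The point of this rewriting is that $(p,q)\mapsto q^2/(4p)$ is jointly convex on $\{p>0\}$, so $J$ is convex on the convex admissible set $\mathcal{C}=\{P;\ P(0)=0,\ P'\ge 2rf,\ J(P)<\infty\}$.

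I would then check that $f$ itself satisfies conditions (\ref{condi2}), so that $P_f\in\mathcal{C}$, i.e.\ $v_f\in\mathcal{A}_f\subseteq\mathcal{A}_f^*$: monotonicity of $f$ gives $P_f(r)\ge r^2 f(r)$, whence $r^3 f^2/P_f\le rf\in L^1(0,1)$, while $\int_0^1 r|\log r| f\le\int_0^1 r|\log r|\,\psi[f]<\infty$ by Lemma \ref{radial2}(i). The heart of the proof is then the claim that $J(P_f)\le J(P)$ for every $P\in\mathcal{C}$. By convexity this follows once $\langle J'(P_f),P-P_f\rangle\ge 0$. Writing $Q=P-P_f$, one has $Q(0)=0$ and $Q'=P'-P_f'\ge 0$, hence $Q\ge 0$ on $[0,1]$ and $Q(1)\ge 0$; integrating by parts the term containing $Q'$ gives
\[
\langle J'(P_f),Q\rangle=\Big[\frac{r^2 f}{P_f}\,Q\Big]_0^1+\int_0^1\mathcal{E}(P_f)\,Q\,dr,
\qquad
\mathcal{E}(P_f)=-\frac{d}{dr}\Big(\frac{rP_f'}{2P_f}\Big)-\frac{r(P_f')^2}{4P_f^2}+\frac1r.
\]
The boundary term at $r=1$ is $\frac{f(1)}{P_f(1)}Q(1)\ge 0$, and at $r=0$ it vanishes because $r^2 f/P_f$ is bounded near $0$ (from $P_f(r)\ge\frac34 r^2 f(r)$) while $Q(0)=0$. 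A direct computation using $P_f'=2rf$ yields the identity
\[
\mathcal{E}(P_f)=\frac{(P_f-r^2 f)^2-r^3 f'\,P_f}{r\,P_f^2},
\]
which is $\ge 0$ since $(P_f-r^2 f)^2\ge 0$ and $f'\le 0$; interpreting $f'$ as the nonpositive measure of the nonincreasing representative of $f$, the corresponding piece of $\int\mathcal{E}(P_f)Q$ reads $\int\frac{r^2}{P_f}Q\,d(-f')\ge 0$ because $Q\ge 0$. Hence $\langle J'(P_f),Q\rangle\ge 0$, so $P_f$ minimizes $J$ over $\mathcal{C}$, i.e.\ $v_f$ minimizes $\mathcal{I}$ among the radial elements of $\mathcal{A}_f^*$.

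To conclude: by Lemma \ref{radial2}(i) the minimizer of $\mathcal{I}_f^*$ over all of $W^{2,2}$ is the radial $v_{\psi[f]}$, and the radial admissible function $v_f$ has no larger energy, so $v_f$ is itself a minimizer of $\mathcal{I}_f^*$; by the uniqueness in Lemma \ref{unique} it coincides with $v_{\psi[f]}$ up to an affine map, whence $\psi[f]=\det\nabla^2 v_f=f$ and the minimizer is given by (\ref{minimizer}). Finally $v_f\in\mathcal{A}_f$ together with $\inf\mathcal{I}_f\ge\inf\mathcal{I}_f^*=\mathcal{I}(v_f)$ shows that $v_f$ is also the minimizer of $\mathcal{I}_f$, and any other minimizer of $\mathcal{I}_f$ lies in $\mathcal{A}_f^*$ with the same energy, hence minimizes $\mathcal{I}_f^*$ and agrees with $v_f$ up to an affine map by Lemma \ref{unique}.

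The step I expect to be the real obstacle is making this variational-inequality computation rigorous under the weak hypotheses at hand: $f$ is only a.e.\ monotone (so $f'$ is merely a nonpositive measure), $P_f$ degenerates quadratically at the origin, and $P\in\mathcal{C}$ is only $W^{1,1}$ with $J(P)$ possibly barely finite; thus the integration by parts, the vanishing of the boundary term at $r=0$, and the measure-theoretic reading of $\int\mathcal{E}(P_f)Q$ all require care, whereas the convexity of $J$ and the algebraic identity for $\mathcal{E}(P_f)$ are the clean part.
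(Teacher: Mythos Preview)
Your argument is correct and your final concluding paragraph matches the paper's logic exactly. The difference lies entirely in how you establish $\mathcal{I}(v_f)\le \mathcal{I}(v_{\psi[f]})$. You set up the convex one-dimensional functional $J(P)=\int_0^1\big(r(P')^2/(4P)+P/r\big)\,dr$, compute its first variation at $P_f$, integrate by parts, and use $f'\le 0$ to verify the variational inequality $\langle J'(P_f),P-P_f\rangle\ge 0$; this shows $P_f$ minimizes $J$ over \emph{all} radial competitors in $\mathcal{A}_f^*$. The paper instead compares \emph{only} the two specific radial profiles $v_f$ and $v_\psi$: using the explicit formula (\ref{hes}) it writes $\mathcal{I}(v_\psi)-\mathcal{I}(v_f)$ as a sum of two integrals and bounds the first by the elementary chain
\[
\frac{r^3\psi^2}{P_\psi}-\frac{r^3 f^2}{P_f}\;\ge\;\frac{r^3 f^2}{P_\psi}-\frac{r^3 f^2}{P_f}
\;\ge\;-\frac{r^3 f^2(P_\psi-P_f)}{P_f^2}\;\ge\;-\frac{P_\psi-P_f}{r},
\]
using only $\psi\ge f$ and the monotonicity consequence $P_f(r)\ge r^2 f(r)$; this last term cancels exactly against the second integral. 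The payoff of the paper's route is precisely that it sidesteps the obstacle you flag at the end: no derivative of $f$, no integration by parts, no boundary analysis at $r=0$, and no measure-theoretic reading of $\mathcal{E}(P_f)$ are needed---the argument is a pointwise inequality between nonnegative integrands and works verbatim for $f$ merely in $L^1$ and a.e.\ nonincreasing. Your approach, on the other hand, is more structural (it explains \emph{why} $v_f$ is optimal via convexity and an Euler--Lagrange identity) and yields the slightly stronger statement that $v_f$ beats every radial competitor, not just $v_\psi$; but since Lemma~\ref{radial2}(i) already forces the global minimizer to be radial, this extra generality is not needed for the theorem.
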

\begin{proof}
By Lemma \ref{radial2}, the radial function $v_{\psi[ f ]}$ is the
unique minimizer of $\mathcal{I}_ f ^*$. Consider $v_ f $ given by
(\ref{minimizer}). We will prove that $\mathcal{I}(v_ f )\leq \mathcal{I}(v_\psi)$.
This will imply that $v_ f \in W^{2,2}(\Omega)$ and hence, by
uniqueness of minimizers there must be: $v_ f  = v_\psi$, as claimed in the Theorem.

Recall that $\psi\geq  f $ and note that $\int_0^r2s f (s)~\mbox{d}s\geq r^2 f (r)$ in view
of (\ref{dec}). As in (\ref{hes}), we compute:
\begin{equation*}
\begin{split}
\int_\Omega |\nabla^2 v_\psi|^2 & - \int_\Omega |\nabla^2 v_ f |^2   =
 2\pi \int_0^1\frac{r^3\psi(r)^2}{\int_0^r 2s\psi(s)\mbox{d}s} - \frac{r^3 f (r)^2}{\int_0^r 2s f (s)\mbox{d}s}~\mbox{d}r +
2\pi \int_0^1 \frac{\int_0^r 2s(\psi -  f )\mbox{d}s}{r}~\mbox{d}r
\\ & \geq - 2\pi \int_0^1\frac{r^3 f ^2 \int_0^r 2s(\psi -
   f )\mbox{d}s}{(\int_0^r 2s\psi(s)\mbox{d}s)(\int_0^r
  2s f (s)\mbox{d}s)} \mbox{d}r + 
2\pi \int_0^1 \frac{\int_0^r 2s(\psi -  f )\mbox{d}s}{r}~\mbox{d}r
\\ & \geq - 2\pi \int_0^1\frac{r^3 f ^2 \int_0^r 2s(\psi -
   f )\mbox{d}s}{(\int_0^r  2s f (s)\mbox{d}s)^2} \mbox{d}r + 
2\pi \int_0^1 \frac{\int_0^r 2s(\psi -  f )\mbox{d}s}{r}~\mbox{d}r
\\ & \geq - 2\pi \int_0^1\frac{r^3 f ^2 \int_0^r 2s(\psi -
   f )\mbox{d}s}{(r^2 f (r))^2} \mbox{d}r + 
2\pi \int_0^1 \frac{\int_0^r 2s(\psi -  f )\mbox{d}s}{r}~\mbox{d}r
= 0.
\end{split}
\end{equation*}
The proof is now achieved in view of Corollary \ref{condi} and Lemma \ref{radial2}.
\end{proof}
\begin{remark}
Note that $v_ f $ in general, is not a minimizer of the relaxed
problem $\mathcal{I}_ f ^*$. Consider $ f _\epsilon(r) =
\epsilon\chi_{(0, 1/2]} + \chi_{(1/2,1]}$. Then
$v_{ f _\epsilon}\in W^{2,2}(\Omega)$ and, by (\ref{hes}):
\begin{equation*}
\begin{split}
\int_{\Omega} |\nabla^2 v_{ f _\epsilon}|^2 & \geq 
2\pi\int_0^1 r|v''(r)|^2~\mbox{d}r \geq 2\pi \int_{1/2}^1
\frac{r^3}{\frac{\epsilon}{4} + (r^2 - \frac{1}{4})}~\mbox{d}r
\geq C \int_{1/2}^1 \frac{1}{r^2 - (1-\epsilon)/4}~\mbox{d}r \\ & 
\geq C\left(\log(1-\frac{\sqrt{1-\epsilon}}{2}) -
  \log(\frac{1-\sqrt{1-\epsilon}}{2})
- \log(1+\frac{\sqrt{1-\epsilon}}{2}) +
\log(\frac{1+\sqrt{1-\epsilon}}{2})\right) \\ & 
\qquad \to \infty \qquad \mbox{ as } \epsilon\to 0.
\end{split}
\end{equation*}
On the other hand $ f _\epsilon\leq \psi\equiv 1$ and we see that
$\int_{\Omega} |\nabla^2 v_{\psi}|^2 = 2\pi$, where 
$v_\psi = \frac{1}{2}r^2$. Therefore $\mathcal{I}(v_\psi)< \mathcal{I}(v_{f_\epsilon})$
for all small $\epsilon$. A standard approximation argument leads to similar counter-examples with smooth $f$.  
\end{remark}
  
\section{Critical points of the Monge-Amp\`ere constrained energy in the
radial case: a proof of Theorem \ref{criptmA}}

The Euler-Lagrange equations for the problem (\ref{prob}) 
are complicated, which is due to the, in general, unknown structure of
the tangent space to the constraint set $\mathcal{A}_f$. Consider instead the functional:
\begin{equation*}
\Lambda(v,\lambda) = \int_\Omega |\nabla^2 v|^2 
+ \int_\Omega \lambda (\det \nabla^2 v -f) , \qquad v\in W^{2,2},
\quad \lambda\in L^\infty.
\end{equation*}
The following result is to be compared with \cite{ho}, where a converse statement is proved in a limited setting:
\begin{lemma}
If $(v,\lambda)$ is a critical point for $\Lambda$ then $v$ 
is a critical point for (\ref{prob}). 
\end{lemma}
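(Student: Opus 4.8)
The plan is to unfold the definition of a critical point for the Lagrangian $\Lambda$ and show that the first of the two Euler--Lagrange conditions is precisely the criticality condition for the constrained problem \eqref{prob}. First I would fix notation: $(v,\lambda)\in W^{2,2}(\Omega)\times L^\infty(\Omega)$ is critical for $\Lambda$ means that $\frac{d}{dt}\big|_{t=0}\Lambda(v+t\varphi,\lambda)=0$ for every admissible test function $\varphi$ (say $\varphi\in W^{2,2}(\Omega)$, or in $C_c^\infty(\Omega)$ together with boundary-compatible variations), and $\frac{d}{dt}\big|_{t=0}\Lambda(v,\lambda+t\mu)=0$ for every $\mu\in L^\infty(\Omega)$. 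The second condition is immediate: since $\Lambda$ is affine in $\lambda$, it yields $\int_\Omega \mu(\det\nabla^2 v - f)=0$ for all $\mu\in L^\infty$, hence $\det\nabla^2 v = f$ a.e., i.e.\ $v\in\mathcal{A}_f$. So $v$ is at least in the constraint set.

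Next I would compute the variation in $v$. Using $\det\nabla^2(v+t\varphi)=\det\nabla^2 v + t\,\mathrm{cof}\,\nabla^2 v:\nabla^2\varphi + t^2\det\nabla^2\varphi$, we get
\begin{equation*}
\frac{d}{dt}\Big|_{t=0}\Lambda(v+t\varphi,\lambda) = 2\int_\Omega \nabla^2 v:\nabla^2\varphi + \int_\Omega \lambda\,\big(\mathrm{cof}\,\nabla^2 v : \nabla^2\varphi\big) = 0,
\end{equation*}
valid for all admissible $\varphi$. I would then recall that the tangent cone (or the set of admissible directions) to $\mathcal{A}_f$ at $v$ consists exactly of those $\varphi$ for which the linearized constraint holds, $\mathrm{cof}\,\nabla^2 v:\nabla^2\varphi=0$; more precisely, for $\varphi$ in the tangent space, $\frac{d}{dt}\det\nabla^2(v+t\varphi)\big|_{t=0}=\mathrm{cof}\,\nabla^2 v:\nabla^2\varphi$ must vanish so that the curve stays (to first order) in $\mathcal{A}_f$. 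For such $\varphi$ the $\lambda$-term in the displayed identity drops out, leaving $2\int_\Omega \nabla^2 v:\nabla^2\varphi = 0$, which is precisely the statement that $v$ is a critical point of $\mathcal{I}(v)=\int_\Omega|\nabla^2 v|^2$ restricted to $\mathcal{A}_f$. Combined with $v\in\mathcal{A}_f$ from the previous step, this gives the conclusion.

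The main obstacle is the interpretation of "critical point of \eqref{prob}" given that $\mathcal{A}_f$ is a nonlinear (and possibly quite irregular) constraint set whose tangent structure is not fully understood --- the paper itself flags this as the reason for introducing $\Lambda$. The honest way to handle it is to adopt as the \emph{definition} of criticality for the constrained problem that $D\mathcal{I}(v)[\varphi]=0$ for all $\varphi$ in the (formal) tangent space $T_v\mathcal{A}_f=\{\varphi\in W^{2,2}(\Omega):\mathrm{cof}\,\nabla^2 v:\nabla^2\varphi = 0\}$, and then the argument above is essentially a one-line consequence of the vanishing of the $\lambda$-term on that subspace. I would also note explicitly that no surjectivity/constraint-qualification hypothesis is needed for \emph{this} direction (the converse, recovering a multiplier, is the subtle part and is exactly what the paper says is only known "in a limited setting" in \cite{ho}), so the proof is genuinely short; the only care required is to state the test-function class consistently between the definition of criticality for $\Lambda$ and for $\mathcal{I}|_{\mathcal{A}_f}$, and to make sure the cofactor expansion of the determinant is justified in $W^{2,2}$, which holds since $\mathrm{cof}\,\nabla^2 v\in L^2$ and $\nabla^2\varphi\in L^2$ while the quadratic remainder $t^2\int_\Omega\lambda\det\nabla^2\varphi$ is $o(t)$.
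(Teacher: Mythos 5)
Your proposal follows the same route as the paper's proof: use the $\lambda$-variation to recover the constraint $\det\nabla^2 v = f$, compute the $v$-variation of $\Lambda$, identify the tangent space $T_v\mathcal{A}_f$ by linearizing the constraint (using $\mathrm{cof}\,\nabla^2 v : \nabla^2\varphi = \mathrm{cof}\,\nabla^2\varphi : \nabla^2 v$ for $2\times 2$ matrices, the two formulations coincide), and observe that the Lagrange-multiplier term drops out on that subspace. The additional remarks you make about the test-function class and the absence of a constraint-qualification hypothesis are correct and simply make explicit what the paper leaves implicit.
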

\begin{proof}
Let $w$ be a tangent vector to
$\mathcal{A}_f$ at a given $v\in\mathcal{A}_f$, so that there exists
a continuous curve $\phi: [0,1] \to \mathcal{A}_f$ with $\phi(0)=v$ such that $\phi'(0)=w$. Note that  
$\phi(\epsilon) = v+\epsilon w + o(\epsilon)\in \mathcal{A}_f$.
Expanding $\det$ in the usual manner we obtain: 
$$f=\det\nabla^2\phi(\epsilon) = \det(\nabla^2v + \epsilon\nabla^2 w + o(\epsilon)) = 
\det\nabla^2v + \epsilon \mbox{cof}\nabla^2w : \nabla^2v + o(\epsilon)$$
which implies that:
\begin{equation}\label{tan}
\mathrm{cof}\nabla^2w : \nabla^2 v =0 \qquad \mbox{ a.e. in } \Omega.
\end{equation}
To prove (i), let $(v,\lambda)$ be a critical point of $\Lambda$. 
Taking variation $\mu$ in $\lambda$ we get:
$\int \mu (\det\nabla^2 v-f) =0$,
thus $v\in \mathcal{A}_f$. Taking now a variation $w$ in $v$ we obtain:
\begin{equation}\label{var}
2\int\nabla^2v:\nabla^2w + \int\lambda~ \mbox{cof}\nabla^2v:\nabla^2w = 0 
\qquad\forall w\in W^{2,2}.
\end{equation}
In particular, for every $w$ satisfying (\ref{tan}) the above reduces to 
$\int\nabla^2v:\nabla^2w  = 0$ which is the variation of pure bending functional
$\mathcal{I}$. Hence $v$ must indeed be a critical point of (\ref{prob}).
\end{proof}

\begin{lemma}
The Euler-Lagrange equations of $\Lambda$ and the natural boundary conditions are:
\begin{equation}\label{EL}
\begin{split} 
&2\Delta^2 v +\mathrm{cof}\nabla^2v : \nabla^2 \lambda = 0 \qquad \mbox{in }~\Omega,\\
&\mathrm{det}\nabla^2v = f
\qquad \mbox{in }~\Omega,
\end{split}
\end{equation}
\begin{equation}\label{bdary}
\begin{split}
&\partial_\tau \left[\Big(2\nabla^2v + \lambda \mathrm{cof}\nabla^2v\Big): (\tau\otimes \vec n)\right] 
+ \Big(2\nabla\Delta v + (\mathrm{cof}\nabla^2v )\nabla\lambda\Big)\vec n = 0 
\qquad \mbox{on }~ \partial\Omega, \\
& \Big(2\nabla^2v + \lambda \mathrm{cof}\nabla^2v\Big): (\vec n\otimes \vec n)  = 0
\qquad \mbox{on }~ \partial\Omega.
\end{split}
\end{equation}
\end{lemma}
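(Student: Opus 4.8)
The plan is to carry out the standard first--variation computation for $\Lambda$ in its two arguments and then read off the bulk equations and the natural boundary conditions by integrating by parts twice; throughout, $v$, $\lambda$ and the test functions are taken smooth (the natural boundary conditions being, by definition, the boundary terms that survive). Varying $\lambda\mapsto\lambda+\epsilon\mu$ is immediate: the derivative at $\epsilon=0$ of $\Lambda(v,\lambda+\epsilon\mu)$ equals $\int_\Omega\mu\,(\det\nabla^2 v-f)$, whose vanishing for all $\mu$ forces $\det\nabla^2 v=f$ in $\Omega$, which is the second equation in (\ref{EL}). For the variation $v\mapsto v+\epsilon w$ one uses the exact $2\times2$ identity $\det(A+\epsilon B)=\det A+\epsilon\,\mathrm{cof}A:B+\epsilon^2\det B$ to obtain that the first variation in $v$ is $\int_\Omega\big(2\nabla^2 v+\lambda\,\mathrm{cof}\nabla^2 v\big):\nabla^2 w$, which I abbreviate as $\int_\Omega M:\nabla^2 w$ with the symmetric matrix field $M:=2\nabla^2 v+\lambda\,\mathrm{cof}\nabla^2 v$. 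The task is then to rewrite the condition $\int_\Omega M:\nabla^2 w=0$, required for \emph{all} $w\in W^{2,2}(\Omega)$ (with no boundary constraint on $w$), in strong form.

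Next I integrate by parts twice, taking the divergence of a matrix field row by row and using the two elementary identities $\mathrm{div}(\mathrm{cof}\nabla^2 v)=0$ (the Piola/null-Lagrangian identity: the rows of the cofactor of a Hessian are divergence-free) and $\mathrm{div}(\nabla^2 v)=\nabla\Delta v$. These give $\mathrm{div}\,M=2\nabla\Delta v+(\mathrm{cof}\nabla^2 v)\nabla\lambda$ and hence $\mathrm{div}\,\mathrm{div}\,M=2\Delta^2 v+\mathrm{cof}\nabla^2 v:\nabla^2\lambda$. Two integrations by parts then yield
\begin{equation*}
\int_\Omega M:\nabla^2 w=\int_\Omega(\mathrm{div}\,\mathrm{div}\,M)\,w+\int_{\partial\Omega}(M\vec n)\cdot\nabla w\,\mathrm ds-\int_{\partial\Omega}(\mathrm{div}\,M\cdot\vec n)\,w\,\mathrm ds .
\end{equation*}

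Finally I split the boundary gradient into its normal and tangential parts, $\nabla w=(\partial_{\vec n}w)\vec n+(\partial_\tau w)\tau$ on $\partial\Omega$, so that $(M\vec n)\cdot\nabla w=(M:\vec n\otimes\vec n)\,\partial_{\vec n}w+(M:\tau\otimes\vec n)\,\partial_\tau w$, and I integrate the tangential summand by parts along the closed curve $\partial\Omega$ (which contributes no endpoint terms), turning $\int_{\partial\Omega}(M:\tau\otimes\vec n)\,\partial_\tau w$ into $-\int_{\partial\Omega}\partial_\tau(M:\tau\otimes\vec n)\,w$. Collecting terms, the first variation in $v$ becomes a sum of three integrals against $w$ on $\Omega$, against $\partial_{\vec n}w$ on $\partial\Omega$, and against $w$ on $\partial\Omega$; since these three traces can be prescribed independently, its vanishing for all $w$ is equivalent to $\mathrm{div}\,\mathrm{div}\,M=0$ in $\Omega$ (which, by the formula above, is the first equation of (\ref{EL})), to $M:\vec n\otimes\vec n=0$ on $\partial\Omega$ (the second equation of (\ref{bdary})), and to $\partial_\tau(M:\tau\otimes\vec n)+\mathrm{div}\,M\cdot\vec n=0$ on $\partial\Omega$, which after substituting $\mathrm{div}\,M=2\nabla\Delta v+(\mathrm{cof}\nabla^2 v)\nabla\lambda$ is exactly the first equation of (\ref{bdary}). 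The only steps requiring genuine care are the correct handling of the tangential-derivative boundary term through integration by parts on $\partial\Omega$, and the argument that the interior trace and the two boundary traces ($w$ and $\partial_{\vec n}w$ on $\partial\Omega$) of a generic $W^{2,2}$ function are mutually free, so that each integrand is separately forced to vanish; the two matrix identities $\mathrm{div}(\mathrm{cof}\nabla^2 v)=0$ and $\mathrm{div}(\nabla^2 v)=\nabla\Delta v$ are what make the bulk operator collapse to the stated form.
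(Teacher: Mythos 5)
Your proof is correct and takes essentially the same route as the paper: two integrations by parts in $\Omega$, then a split of $\nabla w$ on $\partial\Omega$ into normal and tangential parts, and a further integration by parts of the tangential contribution along the closed curve $\partial\Omega$. The only cosmetic difference is that you package everything in the single field $M=2\nabla^2 v+\lambda\,\mathrm{cof}\nabla^2 v$ and invoke the Piola identity $\mathrm{div}(\mathrm{cof}\nabla^2 v)=0$ explicitly, whereas the paper integrates the two summands separately and uses that identity implicitly.
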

\begin{proof}
Assuming enough regularity on $v,\lambda$, integration by parts gives:
$$2\int_\Omega\nabla^2v:\nabla^2 w = 2\int_\Omega w\Delta^2 v +
2\int_{\partial\Omega} \Big[(\nabla^2v \nabla w) \vec n 
- w (\nabla\Delta v )\vec n\Big],$$
$$\int_\Omega \lambda~ \mbox{cof} \nabla^2 v:\nabla^2 w = 
\int_\Omega w ~\mbox{cof}\nabla^2v : \nabla^2 \lambda + 
\int_{\partial\Omega} \Big[ \lambda ((\mbox{cof}\nabla^2 v)\nabla w)\vec n
- w ((\mbox{cof}\nabla^2 v)\nabla \lambda)\vec n \Big]$$
In view of (\ref{var}) the above calculations yield (\ref{EL}) and:
$$\int_{\partial\Omega} \Big[\Big((2\nabla^2v + \lambda \mbox{cof}\nabla^2v)\nabla w\Big) \vec n 
- w\Big(2\nabla\Delta v + (\mbox{cof}\nabla^2v )\nabla\lambda\Big)\vec n\Big] = 0
\qquad \forall w\in W^{2,2}.$$
Writing now $\nabla w = (\partial_\tau w)\tau + (\partial_{\vec n} w)\vec n$, 
where $\tau$ is the unit vector tangent to $\partial\Omega$ we get:
\begin{equation*}
\begin{split}
\int_{\partial\Omega} \Big[
(\partial_\tau w)\Big(2\nabla^2v + \lambda \mbox{cof}\nabla^2v\Big)&: (\tau\otimes \vec n) 
- w\Big(2\nabla\Delta v + (\mbox{cof}\nabla^2v )\nabla\lambda\Big)\vec n\Big]\\
&+ \int_{\partial\Omega}(\partial_{\vec n} w)\Big(2\nabla^2v + \lambda \mbox{cof}\nabla^2v\Big):
(\vec n\otimes \vec n)  = 0
\qquad \forall w\in W^{2,2}.
\end{split}
\end{equation*}
Integrating by parts on the boundary in the first integral above, 
we deduce (\ref{bdary}).
\end{proof}

\medskip

The proof of Theorem \ref{criptmA} follows now directly from the result below.

\begin{proposition}\label{lemlem}
Assume that $f\in\mathcal{C}^\infty(\bar B(0,1))$ is radially symmetric
i.e. $f=f(r)$, and that $f\geq c>0$.  Let  $v=v(r)\in \mathcal{A}_f$
be a radial solution to the constraint: $\det\nabla^2v=f$ in $B(0,1)$.
Then there is a radial function $\lambda=\lambda(r)\in
\mathcal{C}^\infty(\bar B(0,1))$ such that 
$(v,\lambda)$ is a critical point for $\Lambda$.
\end{proposition}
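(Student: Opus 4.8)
The plan is to produce the radial multiplier $\lambda=\lambda(r)$ explicitly, by reducing the Euler--Lagrange system \eqref{EL}--\eqref{bdary} to a single linear second order ODE. First I would record the regularity and nondegeneracy of $v$: by Lemma \ref{radial}, $|v'(r)|^2=\int_0^r 2sf(s)\,\mathrm{d}s$, and since $f=f(r)\in\mathcal{C}^\infty(\bar B(0,1))$ with $f\ge c>0$, the right hand side is a smooth positive function of $r^2$ vanishing to second order at $r=0$; hence, up to replacing $v$ by $-v$, one has $v'(r)=r\,a(r^2)$ with $a$ smooth and $a>0$, $v''(r)=rf(r)/v'(r)>0$ for $r>0$, and $v\in\mathcal{C}^\infty(\bar B(0,1))$ with $\nabla^2v$ uniformly positive definite. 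In the orthonormal radial frame $(e_r,e_\theta)$ this means $\nabla^2v=\mathrm{diag}(v'',v'/r)$ and $\mathrm{cof}\,\nabla^2v=\mathrm{diag}(v'/r,v'')$; a direct computation in that frame (using $\mathrm{cof}\,A=(\mathrm{tr}\,A)\mathrm{Id}_2-A$ in two dimensions) gives, for any radial field $\lambda$, the identity $\mathrm{cof}\,\nabla^2v:\nabla^2\lambda=\tfrac1r(v'\lambda')'$, and also shows $\Delta v$ is radial with $r\,\Delta^2 v=\bigl(r(\Delta v)'\bigr)'$.

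Next I would rewrite the system \eqref{EL}--\eqref{bdary} under the radial ansatz $\lambda=\lambda(r)$. The interior equation of \eqref{EL} becomes $\tfrac1r(v'\lambda')'=-2\Delta^2v$, i.e.\ $\bigl(v'\lambda'+2r(\Delta v)'\bigr)'=0$, so that $v'(r)\lambda'(r)+2r(\Delta v)'(r)$ is constant; since $v'$ and $r(\Delta v)'$ both vanish at $r=0$, this constant is $0$, and the interior equation is equivalent to
$$v'(r)\,\lambda'(r)=-2r\,(\Delta v)'(r).$$
On $\partial B(0,1)$ we have $\vec n=e_r$ and $\tau=e_\theta$; because $2\nabla^2v+\lambda\,\mathrm{cof}\,\nabla^2v$ is diagonal in the radial frame, its $(\tau,\vec n)$ component vanishes identically along the circle, so $\partial_\tau[\,\cdot\,]=0$, and the first boundary condition of \eqref{bdary} reduces to $2(\Delta v)'(1)+v'(1)\lambda'(1)=0$, while the second reduces to $2v''(1)+v'(1)\lambda(1)=0$.

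The crucial observation is that the first boundary condition holds automatically: evaluating the identity $v'\lambda'+2r(\Delta v)'=0$ (already forced by the interior equation together with smoothness at $0$) at $r=1$ gives precisely $2(\Delta v)'(1)+v'(1)\lambda'(1)=0$. It then only remains to integrate $\lambda'(r)=-2r(\Delta v)'(r)/v'(r)$: writing $v'(r)=r\,a(r^2)$ and $(\Delta v)'(r)=r\,b(r^2)$ with $a,b$ smooth and $a>0$, the integrand equals $-2r\,b(r^2)/a(r^2)$, a smooth function of $r^2$ vanishing at the origin, whose primitive $\lambda$ is again a smooth function of $r^2$, so $\lambda\in\mathcal{C}^\infty(\bar B(0,1))$. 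The single free integration constant $\lambda(0)$ is then fixed by imposing $\lambda(1)=-2v''(1)/v'(1)$, i.e.\ the remaining (second) boundary condition. With this $\lambda$, the pair $(v,\lambda)$ satisfies \eqref{EL} and \eqref{bdary}; running the integration by parts of the preceding lemma backwards then shows it satisfies the variational identity \eqref{var} (the variation in $\lambda$ being automatic since $v\in\mathcal{A}_f$), so $(v,\lambda)$ is a critical point of $\Lambda$.

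I expect the only delicate bookkeeping to be the behaviour at the origin, together with the fact that the radially reduced interior equation is a second order ODE carrying \emph{two} conditions at $r=1$, hence formally overdetermined; the resolution is exactly that one of those conditions (the Neumann-type one) is a consequence of the interior equation and regularity at $r=0$, which leaves precisely one free constant of integration to meet the remaining (Dirichlet-type) condition.
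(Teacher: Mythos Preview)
Your proposal is correct and follows essentially the same route as the paper: reduce \eqref{EL}--\eqref{bdary} under the radial ansatz to the ODE $(v'\lambda')'=-2(r(\Delta v)')'$, observe that the constant of integration is forced to be zero by the behaviour at $r=0$, check that the $(\tau,\vec n)$ boundary term vanishes identically and that the remaining Neumann-type condition is then automatic, and finally fix the free additive constant in $\lambda$ via $2v''(1)+\lambda(1)v'(1)=0$. Your treatment of the regularity at the origin---writing $v'(r)=r\,a(r^2)$ and $(\Delta v)'(r)=r\,b(r^2)$ with $a,b$ smooth and $a>0$, so that $\lambda'$ is $r$ times a smooth function of $r^2$---is a clean and direct way to get $\lambda\in\mathcal{C}^\infty(\bar B(0,1))$, whereas the paper reaches the same conclusion by first checking $\lambda\in W^{2,\infty}$ and then invoking interior elliptic regularity of \eqref{EL}.
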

\begin{proof}
Recall that since $f$ is smooth and positive, then by
\cite[Theorem 6.3]{LMP-arma} any
$W^{2,2}$ solution of the Monge-Amp\`ere equation $\det \nabla^2 v =
f$ in $B(0,1)$ satisfies $v\in\mathcal{C}^\infty(B(0,1))$. On the
other hand, by radial symmetry, $v=v_f$  given in
(\ref{minimizer}), so we conclude that in fact:
$v\in\mathcal{C}^\infty(\bar B(0,1))$. In particular $v\in
\mathcal{C}^\infty([0,1])$ and $ v'(0) = (\Delta v)'(0) = 0$.

Let $R_\theta$ denote the planar rotation by angle $\theta$. In polar coordinates, we have:
$$ \ds \nabla v(r,\theta) = v'(r)  R_\theta e_1 = v'(r)\vec n, \qquad
\nabla^2 v(r,\theta) =  R_\theta\left [  \begin{array}{cc} v'' & 0 \\
    0 & \frac{v'}{r} \end{array} \right ] R^T_\theta,$$
and also note that: ${\rm cof} (R_\theta A R_\theta^T) = R_\theta ({\rm cof} A) R_\theta^T$.  
We now rewrite (\ref{EL}) (\ref{bdary}) using the ansatz
$\lambda=\lambda(r)$ and assuming sufficient regularity. First, (\ref{EL}) becomes:
$\ds \frac{1}{r} (v'' \lambda' + v'\lambda'') = -2\big((\Delta v)'' +
\frac{(\Delta v)'}{r}\big)$, where we used that $\ds \Delta v = v''+\frac{v'}{r}$.
Equivalently: $\ds (\lambda' v')' = -2 \big(r(\Delta v)'\big)'$, which
becomes:
\begin{equation}\label{solution}
\lambda'(r) = -2\frac{r}{v'(r)}(\Delta v)' \quad \mbox{in } (0,1).
\end{equation}
Note that this is consistent with $\lambda'(0) = 0$, because:
\begin{equation}\label{aiuto}
\lim_{r\to 0} \frac{v'(r)}{r} = \big(\lim_{r\to 0}\frac{(v'(r))^2}{r^2}\big)^{1/2} = 
\big(\lim_{r\to 0}\frac{2\int_0^r sf(s)~\mbox{d}s}{r^2}\big)^{1/2}
=\big(\lim_{r\to 0}\frac{2r f(r)}{2r}\big)^{1/2}= \sqrt{f(0)} \neq 0.
\end{equation}

We now examine the boundary equations (\ref{bdary}). We have:
$$ \ds \Big(2\nabla^2v + \lambda \mathrm{cof}\nabla^2v\Big):
  (\tau\otimes \vec n)  =  R_\theta A(r) R_\theta^T : (\tau  \otimes\vec n) 
= A(r) : (R_\theta^T \tau\otimes R_\theta^T \vec n) = A(r) : (e_2 \otimes e_1)  $$ 
for a matrix field $A$ depending only on $r$, and hence:
$$ \partial_\tau \left[\Big(2\nabla^2v + \lambda
  \mathrm{cof}\nabla^2v\Big): (\tau\otimes \vec n)\right]  =0.  $$ 
Also, in view of (\ref{solution}):
$$ \Big(2\nabla\Delta v + (\mathrm{cof}\nabla^2v
)\nabla\lambda\Big)\vec n = 2 (\Delta v)' + \lambda' \big\langle \left
  [  \begin{array}{cc} \frac{v'}{r} & 0 \\ 0 & {v''} \end{array}
\right ] R^T_\theta \vec n , R^T_\theta \vec n  \big\rangle
= 2 (\Delta v)'+ \frac{v'}{r} \lambda' = 0, $$ 
so that the first equation in (\ref{bdary}) is automatically
satisfied. Similarly:
$$ \Big(2\nabla^2v + \lambda \mathrm{cof}\nabla^2v\Big): (\vec n\otimes \vec n) =   
\Big ( 2 \left [  \begin{array}{cc} v'' & 0 \\ 0 &
    \frac{v'}{r} \end{array} \right ]   + \lambda \left
  [  \begin{array}{cc} \frac{v'}{r} & 0 \\ 0 & {v''} \end{array}
\right ] \Big ) : (e_1 \otimes e_1) = 2 v'' + \lambda v', $$ 
so that the second equation in (\ref{bdary}) is satisfied if and only if:
\begin{equation}\label{inicond}
2 v''(1) + \lambda (1) v'(1) = 0.
\end{equation}

Let  $\lambda\in\mathcal{C}^1([0,1])$ be the solution of the initial value problem
(\ref{solution}) (\ref{inicond}). As a side note, we remark that $\lambda$ possesses the following limits:
$\lim_{r\to 0}\lambda''(r) = \lim_{r\to 0} \frac{\lambda'(r)}{r}= -2
\lim_{r\to 0} \frac{(\Delta v)'}{v'} = (\Delta v)''(0), $ so it follows
directly that $\lambda=\lambda(r)\in W^{2,\infty}(B(0,1))$. 
In fact, $\lambda$ is a distributional solution of (\ref{EL}) so in view of the elliptic regularity:
$\lambda\in\mathcal{C}^\infty(B(0,1))$. Since
(\ref{EL}) (\ref{bdary}) hold, the proof of Proposition \ref{lemlem}
is accomplished.
\end{proof}

% Is there a way to prove that all the solutions of the above E.L. are
% radial? That would also imply the uniqueness result we are looking
% for, in view of Hornung. 


\begin{thebibliography}{9999}

%\bibitem{2004} B. Audoly and A. Boudaoud, (2004) Self-similar
 % structures near boundaries in strained systems. {\em Phys. Rev. Lett.} {\bf 91}, 
%086105-086108.

\bibitem{a4} K. Bhattacharya, M. Lewicka and M. Schaffner,
{\it Plates with incompatible prestrain}, submitted (2014).

\bibitem{ciarbookvol3} P.G. Ciarlet, \textit{Mathematical Elasticity}, 
North-Holland, Amsterdam (2000).

\bibitem{k1} J. Dervaux, P. Ciarletta, and M. Ben Amar, {\it Morphogenesis of thin hyperelastic plates: a
constitutive theory of biological growth in the Foppl-von Karman
limit},  Journal of the Mechanics and Physics of Solids, {\bf 57} (3),
(2009), 458--471.
 
\bibitem{kupferman} E. Efrati, E. Sharon and R. Kupferman,
\textit{Elastic theory of unconstrained non-Euclidean plates}, {\em
  J. Mech. Phys. Solids}, {\bf 57} (2009), 762--775. 

\bibitem{FJMgeo} G. Friesecke, R. James and S. M\"uller, 
\textit{A theorem on geometric rigidity and the derivation of nonlinear 
plate theory from three dimensional elasticity}, {\em Comm. Pure. Appl. Math.}, 
{\bf 55} (2002), 1461--1506.  

\bibitem{FJMhier} G. Friesecke, R. James and S. M\"uller, \textit{A hierarchy 
of plate models derived from nonlinear elasticity by
gamma-convergence}, {\em Arch. Ration. Mech. Anal.},  {\bf 180}
(2006),  no. 2, 183--236. 
 
\bibitem{hanhong} Q. Han and J.-X. Hong, \textit{Isometric embedding of Riemannian 
manifolds in Euclidean spaces}, Mathematical Surveys and Monographs, {\bf 130} 
American Mathematical Society, Providence, RI (2006).

%bibitem{karman} T. von K\'arm\'an, \textit{Festigkeitsprobleme im Maschinenbau}, 
%in Encyclop\"adie der Mathematischen Wissenschaften. Vol. IV/4, pp. 311-385, 
%Leipzig, 1910.

%\bibitem{kirchhoff} G. Kirchhoff, \textit{\"Uber das gleichgewicht und die bewegung einer 
%elastischen scheibe} {\em J. Reine Angew. Math.}, (1850), 51--88

\bibitem{ho} P. Hornung, \textit{A remark on constrained von K\'arm\'an
    theories}, preprint (2013).

\bibitem{ho2} P. Hornung, \textit{Continuation of Infinitesimal Bendings on Developable Surfaces and Equilibrium Equations for Nonlinear Bending Theory of Plates}, 
Comm. Partial Differential Equations, {\bf 38}, no. 8, (2013), 1368--1408.

\bibitem{ho3} P. Hornung, \textit{Constrained Willmore equation for disks with
positive Gauss curvature}, preprint (2012).

\bibitem{holepa} P. Hornung, M. Lewicka and R. Pakzad,
\textit{Infinitesimal isometries on developable surfaces and
  asymptotic theories for thin developable  shells}, Journal of
Elasticity, Vol. {\bf 111}, Number 1 (2013).

\bibitem{klein} Y. Klein, E. Efrati and E. Sharon,
\textit{Shaping of elastic sheets by prescription of non-Euclidean metrics}, {\em Science}, {\bf 315} (2007), 1116--1120. 

%\bibitem{Koehl2008}  Koehl MAR, Silk, WK, Liang HY, Mahadevan L (2008) How kelp produce blade shapes 
%suited to different flow regimes: A new wrinkle. {\em Integ. and Comp. Biol.} {\bf 48},
%834-851.

\bibitem{k2} R. Kupferman and Y. Shamai,
{\it Incompatible elasticity and the immersion of non-flat Riemannian manifolds in Euclidean space}, 
Israel J. Math. {\bf 190} (2012) 135--156.

\bibitem{k3} R. Kupferman and C. Maor,
{\it A Riemannian approach to the membrane limit in non-Euclidean elasticity},
to appear in Comm. Contemp. Math.

\bibitem{LMP-prs1} M. Lewicka, L. Mahadevan and M.R. Pakzad,
{\it The F\"oppl-von K\'arm\'an equations for plates with incompatible
  strains},  Proceedings of the Royal Society A {\bf 467} (2011),
402--426.

\bibitem{LMP-prs2} M. Lewicka, L. Mahadevan and M.R. Pakzad,
{\it Models for elastic shells with incompatible strains}, to appear
in Proceedings of the Royal Society A.

\bibitem{LMP-arma} M. Lewicka, L. Mahadevan and M.R. Pakzad,
{\it The Monge-Amp\`ere constrained elastic theories of shallow shells},
submitted (2014).

\bibitem{lemopa1} M. Lewicka, M.G. Mora and M.R. Pakzad, \textit{Shell theories 
arising as low energy $\Gamma$-limit of 3d nonlinear elasticity},
Annali della Scuola Normale Superiore di Pisa
    Cl. Sci. (5) Vol. IX (2010), 1--43. 

\bibitem{lemopa3} M. Lewicka, M.G. Mora and M.R. Pakzad,
    \textit{The matching property of infinitesimal isometries on elliptic surfaces and
    elasticity of thin shells}, Arch. Rational Mech. Anal. (3) Vol. 200 (2011), 1023--1050.

\bibitem{LePa1} M. Lewicka and M.R. Pakzad, \textit{Scaling laws for non-Euclidean plates and the 
    $W^{2,2}$ isometric immersions of Riemannian metrics}, 
    ESAIM: Control, Optimisation and Calculus of Variations, Vol. {\bf
      17}, no 4 (2011), 1158--1173.

\bibitem{LePa2} M. Lewicka and M.R. Pakzad,
    \textit{The infinite hierarchy of elastic shell models; some
    recent results and a conjecture}, Infinite Dimensional Dynamical Systems, Fields Institute
Communications {\bf 64}, 407--420, (2013).

\bibitem{lm1} H. Liang and L. Mahadevan, {\it The shape of a long
    leaf}, Proc. Nat. Acad. Sci. {\bf 106}, 22049--54 (2009).

\bibitem{lm2} H. Liang and L. Mahadevan, {\it Growth, geometry and
    mechanics of the blooming lily},  Proc. Nat. Acad. Sci., {\bf 108}, 5516--21, (2011).

\bibitem{M3} Marder M (2003) The shape of the edge of a leaf. 
{\em Found. of Physics} {\bf 33}, 1743-1768.

%\bibitem{Nath2003} Nath U, Crawford BCW,
%Carpenter R, Coen E (2003) Genetic control of surface curvature. {\em Science} {\bf 299}, 1404-1407.

\bibitem{mardare} S. Mardare, \textit{On isometric immersions of a
    Riemannian space  with little regularity},  Anal. Appl. (Singap.)
  {\bf 2}  (2004),  no. 3, 193--226.

\bibitem{muller} S. M\"uller,
\textit{Higher integrability of determinants and weak convergence in
  $L^1$},  J. Reine Angew. Math. {\bf 412} (1990), 20--34.

\bibitem{POthesis} P. Ochoa, {\em PhD Thesis}, University of Pittsburgh (2014).

\bibitem{Rod}  E.K. Rodriguez, A. Hoger and A. McCulloch, {\em J. Biomechanics}
{\bf 27}, 455 (1994).

%\bibitem{Sharon2007} Sharon E, Roman B, Swinney HL (2007) Geometrically driven wrinkling observed in free plastic sheets and leaves. {\em Phys. Rev. 
%E} {\bf 75}, 046211-046217.
 

%\bibitem{12} B. Schmidt, \textit{Plate theory for stressed
%    heterogeneous multilayers of finite bending energy}, {\em  J. Math. Pures Appl. {\bf 88}}, (2007), 107-122.

\bibitem{ziemer} W. Ziemer, \textit{Weakly Differentiable Functions},  Springer-Verlag (1989).

\end{thebibliography}
\end{document}